\documentclass[10pt]{amsart}
\usepackage{amsmath, amssymb, amsthm, slashed, mathtools}
\usepackage{fullpage}

\usepackage{amsthm, amsopn, amsfonts, xypic}
\usepackage[colorlinks=true]{hyperref}
\usepackage{hyperref}


\newcommand{\RR}{\mathbb{R}}

\newcommand{\CC}{\mathbb{C}}
\newcommand{\les}{\leqslant}
\newcommand{\lesa}{\lesssim}
\newcommand{\g}{\geqslant}
\newcommand{\mc}[1]{\mathcal{#1}}
\newcommand{\p}{\partial}
\newcommand{\lr}[1]{ \langle #1 \rangle}
\newcommand{\eref}[1]{(\ref{#1})}
\newcommand{\ind}{\mathbbold{1}}
\DeclareSymbolFont{bbold}{U}{bbold}{m}{n}
\DeclareSymbolFontAlphabet{\mathbbold}{bbold}

\newcommand{\sF}{ {{}^\star \! F} }
\newcommand{\sD}{ {\, {\slash\!\!\!\! D  }}}
\newcommand{\balpha}{ \underline{\alpha}}
\newcommand{\bL}{\underline{L}}
\newcommand{\lp}[2]{\Vert \, #1 \, \Vert_{#2}}
\newcommand{\ull}{{\underline{L}}}

\newcommand{\tp}{\langle t+r \rangle}
\newcommand{\tm}{\langle t-r \rangle}

\newcommand{\beq}{\begin{equation}}\newcommand{\eq}{\end{equation}}
\newcommand{\beqs}{\begin{equation*}}\newcommand{\eqs}{\end{equation*}}
\def\pa{\partial}

\newcommand{\Lie}[1]{\mathcal{L}_{#1}}

\newtheorem{Theorem}{Theorem}[section]
\newtheorem{Lemma}[Theorem]{Lemma}\newtheorem{lemma}[Theorem]{Lemma}
\newtheorem{Proposition}[Theorem]{Proposition}
\newtheorem*{Remark}{Remark}

\numberwithin{equation}{section}
\begin{document}

\title{Asymptotic Behavior of the Maxwell-Klein-Gordon system}%
\author{Timothy Candy}%
\email{tcandy@math.uni-bielefeld.de}%
\address[T.~Candy]{Universit\"at Bielefeld, Fakult\"at f\"ur Mathematik,
  Postfach 100131, 33501 Bielefeld, Germany}%

\author{Christopher Kauffman}%
\email{kauffman@math.jhu.edu}%
\address[C.~Kauffman]{Johns Hopkins University,  Krieger Hall, 3400 N.~Charles Street, Baltimore, MD 21218, US}%

\author{Hans Lindblad}%
\email{lindblad@math.jhu.edu}%
\address[H.~Lindblad]{Johns Hopkins University,  Krieger Hall, 3400 N.~Charles Street, Baltimore, MD 21218, US}%

\thanks{T.C. acknowledges financial support by the DFG through the CRC ``Taming uncertainty and
profiting from randomness and low regularity in analysis, stochastics
and their applications''. C.K and H.L. were supported in part by NSF Grant DMS-1500925}


\maketitle
\begin{abstract}
 In previous work on the Maxwell-Klein-Gordon system first global existence and then decay estimates have been shown. Here we show that the Maxwell-Klein-Gordon system in the Lorenz gauge satisfies the {\it weak null condition} and give detailed asymptotics for the scalar field and the potential. These asymptotics have two parts, one wave like along outgoing light cones at null infinity, and one homogeneous inside the light cone at time like infinity. Here the charge plays a crucial role in imposing an oscillating factor in the asymptotic system for the field, and in the null asymptotics for the potential. The Maxwell-Klein-Gordon system, apart from being of interest in its own right, also provides a simpler semi-linear model of the quasi-linear Einstein's equations where similar asymptotic results have previously been obtained in wave coordinates.
 \end{abstract}

\section{Introduction}\label{sec:Intro}

The Maxwell-Klein-Gordon equation for a scalar field $\phi: \RR^{1+3} \rightarrow \CC$ and a potential $A_\alpha: \RR^{1+3} \rightarrow \RR$ is given by
    \begin{equation}\label{eq:waveequation}
        \begin{split}
        D^\alpha D_\alpha \phi &=0 \\
            \p^\beta F_{\alpha \beta} &= J_\alpha
        \end{split}
    \end{equation}
where the covariant derivative is given by $D_\alpha = \p_\alpha + i A_\alpha$, the curvature is defined as $F_{\alpha \beta} = \p_\alpha A_\beta - \p_\beta A_\alpha$, and $J_\alpha = \Im( \phi \overline{D_\alpha \phi})$ is the current. We take $x^0 = t$, and indices are raised and lowered with respect to the Minkowski metric $m = \text{diag }(-1, 1, 1, 1)$. The Einstein summation convention is in effect with Greek indices summed over $\alpha = 0, \dots, 3$, and Latin indices summed over the spatial variables $j=1, 2, 3$. Thus $\p^\alpha = m^{\alpha \beta } \p_{\beta}$ and $\p^0 = - \p_t$.

The energy-momentum tensor of this system is given by
\begin{equation*}
Q_{\alpha\beta} = D_\alpha\phi D_\beta\phi - \frac12 m_{\alpha\beta}D_\gamma \phi D^\gamma \phi + \frac12 F_{\alpha\gamma}F_\beta^\gamma + \frac12 \sF_{\alpha\gamma}\sF_\beta^\gamma
\end{equation*}
where $\sF_{\alpha \beta} = \frac{1}{2} \epsilon_{\alpha \beta}^{\;\;\;\;\mu \nu} F_{\mu \nu}$ is the Hodge star operator applied to $F$, and $\epsilon_{\alpha \beta \mu \nu}$ is the volume form on Minkowski space-time $\RR^{1+3}$. A computation shows that the energy $Q_{\alpha \beta}$ is divergence-free for solutions $(\phi, A)$ to \eqref{eq:waveequation}, as the current $J$ satisfies $\p^\alpha J_\alpha = 0$.
Moreover, integrating the divergence of the current over $\RR^3$, we conclude that the charge
    $$ \mathbf{q}   = \int_{\RR^3} J_0 dx,$$
 is conserved. The charge plays a key role in the large time behaviour of the solution $(\phi, A_\mu)$, as it causes a long range correction to the asymptotics of both the scalar field $\phi$, and the gauge $A$.
\\

The system \eref{eq:waveequation} does not uniquely determine $\phi$ and $A$. In particular, for any function $\psi$, the gauge transform $\tilde{A} = A+d\psi$ gives the same field $F$, and moreover, letting $\tilde{\phi}= e^{i \psi} \phi$, if $(\phi, A)$ solve \eref{eq:waveequation}, then $(\tilde{\phi}, \tilde{A})$ also gives a solution to \eref{eq:waveequation}.  In this article, we fix the gauge by imposing the Lorenz gauge condition
\begin{equation}\label{eqn:lorenz cond}
\partial^\alpha A_\alpha = 0.
\end{equation}
Given this we can rewrite the equations for the gauge potential $A_\alpha$ as the wave equation
\begin{equation*}
\Box A_\alpha=\partial^\beta\partial_\beta A_\alpha =  - J_\alpha.
\end{equation*}
This does not completely characterize $A$, in that we can add any one-form of the form $d\psi$ to $A$, where $\psi$ is a solution to the wave equation, and still recover the Lorenz gauge. The Lorenz gauge propagates through time, so if the solution $A_\alpha$ satisfies the Lorenz gauge condition \eqref{eqn:lorenz cond} at $t=0$, then \eqref{eqn:lorenz cond} in fact holds for all times.

    Our goal is to give a precise description of the asymptotic behaviour of the scalar (complex) field $\phi$, and the gauge $A_\mu$, evolved from data at $t=0$. Some care has to be taken however, as the data for the gauge $A_\alpha$ must satisfy the constraints
        $$ \p_t A_0 = \p^j A_j, \qquad \p^j \p_t A_j - \Delta A_0 = J_0 $$
    which arise from the Lorenz gauge condition, and the equation for $F_{0 \beta}$. It particular, it suffices to impose the data
        \begin{equation}\label{eqn:data MKG lorenz}
             \big( \phi(0), D_0 \phi(0) \big) = (\phi_0, \dot{\phi}_0), \qquad \big(A_j(0), \p_t A_j(0) \big) = \big( a_j, \dot{a}_j\big) \text{ for } j=1, 2, 3.
        \end{equation}
    The data for the temporal component of the gauge $(A_0, \p_t A_0)(0)$, can then be constructed via the constraint equations. We give the details of this argument in Section \ref{sec:compatible data} below.

        To study the system \eref{eq:waveequation}, we introduce a \emph{null frame}.
The first two members of this are the null generators of forward and
backward light cones which we define respectively as:
\begin{align*}
        L \ &= \ \partial_t + \partial_r \ , &\bL \ &= \
    \partial_t - \partial_r
    \  \label{L_bL}
\end{align*}
where $r=|x|$. To obtain a basis for vector fields on $\RR^{1+3}$, it only remains to define derivatives in the angular directions.
This can be done in an identical fashion on each time slice $\{t = const\}$
so we only need to define things on $\mathbb{R}^3$. If we let $\{e^0_B\}_{B=1,2}$ denote a local  orthonormal frame for the unit sphere
in $\mathbb{R}^3$, then for each value of the radial variable
we can by extension define:
\begin{equation*}
        S_B \ = \ \frac{1}{r}\, e^0_B \ . \label{ang_frame}
\end{equation*}
Thus  $\{S_B\}_{B=1,2}$ forms an orthonormal basis on each
sphere $\{r=const.\}$, for each fixed time slice. The potential $A_\mu$ can be expressed in the frame $\{L, \bL, S_1, S_2\}$, for instance writing $L = L^\mu \p_\mu$, we have
        $$ A_L = L^\mu A_\mu = A_0 + \omega^j A_j, \qquad A_{\bL} = \bL^\mu A_\mu = A_0 - \omega^j A_j.$$
For later use, we note that the coefficients of the frame can also be raised and lowered using the metric $m$, thus $L_\mu = m_{\mu \nu} L^\nu$, and $L_\mu = L_{\mu}(\omega)$ is a function of $\omega \in \mathbb{S}^2$ only.

\subsection{Results} In previous work on Maxwell Klein Gordon systems first global existence for similar systems was shown by Eardly-Moncrief \cite{EM82a,EM82b} with  refinement by Klainerman-Machedon \cite{KM94}. Later decay estimates were shown in Lindblad-Sterbentz \cite{LS06} after  preliminary results in Shu \cite{S91} and Psarelli \cite{P99}. Recently extensions of \cite{LS06} were given in
Yang \cite{Y15}, Bieri-Miao-Shahshahani \cite{BMS17}, Kauffman \cite{K18} and Klainerman-Wang-Yang \cite{KWY18}.

Here we show that the Maxwell-Klein-Gordon system in the Lorenz gauge satisfy the {\it weak null condition} of Lindblad-Rodnianski \cite{LR05,LR10} and we give the detailed asymptotics of the field and the potential. These asymptotics have two parts, one wave like along outgoing light cones at null infinity, and one homogeneous inside the light cone at time like infinity. Here the charge play a crucial role imposing an oscillating factor in the asymptotic system for the field. Similar results have previously been shown for Einstein's equations in wave coordinates in Lindblad \cite{L17}.

Our results rely on the decay estimates obtained in \cite{LS06}, which require certain natural smallness conditions on the data. To this end, we
define the weighted Sobolev spaces:
\begin{align*}
        \lp{T}{H^{k,s_0}(\RR^3)}^2 \ &= \ \sum_{|I|\leqslant k}\
    \int_{\RR}\ (1 + r^2)^{s_0 + |I|} \ |\nabla_x^I\, T|^2 \
    dx \ . \label{tensor_initial_sob}
\end{align*}
We will assume that for some $1/2<s_0<3/2$ and sufficiently small $\epsilon>0$ our initial data satisfy
\begin{equation}\label{eq:inidatanorm}
\| (a_1, a_2, a_3) \|_{H^{k+1, s_0 -1}} + \| (\dot{a}_1, \dot{a}_2, \dot{a}_3) \|_{H^{k, s_0}} + \| \phi_0 \|_{H^{k+1, s_0-1}} + \| \dot{\phi}_0 \|_{H^{k, s_0}} \les \epsilon.
\end{equation}
Given $s_0$ let  $s$ and $\gamma$ be any numbers such that
\begin{equation}\label{eq:sandgammacond}
\frac{1}{2}<s<1,\qquad 0<\gamma < \frac{3}{2}-s\qquad\text{and}\qquad
{s_0^\prime} = s+\gamma<s_0.
\end{equation}
Our main results are the following:

\subsubsection{Asymptotics at null infinity along the light cones}
\begin{Theorem}\label{thm:asymp in exterior}
Let $k\g 7$ with $k\in\mathbb{N}$ and assume that \eqref{eq:inidatanorm} and \eqref{eq:sandgammacond} hold.
Then provided that $\epsilon$ is sufficiently small, if $(A_\mu, \phi)$ is the unique global solution to \eqref{eq:waveequation} with constraint \eqref{eqn:lorenz cond} and data \eqref{eqn:data MKG lorenz}, for any $q\in \RR$, $\omega \in \mathbb{S}^2$ the limits
    \begin{equation*}
        \Phi_0(q, \omega)= \lim_{t \rightarrow \infty} \big( r e^{i \frac{1}{4\pi} {\mathbf{q}}   \ln (1+r)}\phi\big)\big( t, (t + q) \omega\big),
    \end{equation*}
and
    \begin{equation*}
 \mathcal{A}_{S_B}(q,\omega)=\lim_{t\to\infty}
(r {A}_{S_B})\big(t,(q+t)\omega\big), \qquad \qquad \mc{A}_{\ull}^0(q, \omega)= \lim_{t \to \infty} ( r A^{mod}_{\ull})\big( t, (t + q) \omega \big)
    \end{equation*}
exist, where we define
$$ A^{mod}_{\ull}(t,r\omega) = A_{\ull}\big( t, r\omega\big)
            -\frac{1}{2r} \int_{r-t}^\infty \mc{J}_{\ull}(\eta, \omega) \ln\Big( \frac{\eta + t+r}{\eta + t-r}\Big) d\eta $$
and
    $$ \mc{J}_{\ull}(q,\omega) = - 2\Im\Big( \Phi_0(q, \omega) \overline{\p_q \Phi_0(q, \omega)} \Big). $$
Moreover, for $r=|x|>\frac{t}{2}$ and $\omega = \frac{x}{|x|}$ we have
    \begin{align*}
      \big|  ( e^{ i {\frac{1}{4\pi} \mathbf{q}}   \ln{(1+r)}} r \phi) (t,x)  -  \Phi_0(r-t, \omega)\Big| &\lesa
       \epsilon  \lr{t+r}^{\frac{1}{2} - (s+\gamma)} +   \epsilon\lr{t+r}^{-1} \lr{t-r}^{\frac{3}{2}-s} S^0(t,r) \ind_{\{t>r\}}.
    \end{align*}
and
    \begin{align*}
       \big| (r A_L)(t,x) - {\frac{1}{4\pi} \mathbf{q}}   \big| + &\big| (r A_{S_B})(t,x) -  \mc{A}_{S_B}(r-t, \omega)\big|+ \big| ( r A^{mod}_{\ull})(t,r\omega) - \mc{A}_{\ull}^0(r-t, \omega) \big| \\
         &\lesa \epsilon \lr{t+r}^{\frac{1}{2}-(s+\gamma)} + \epsilon^2 \frac{\lr{t-r}}{\lr{t+r}} S^0\,  \ind_{\{t>r\}}.
    \end{align*}
    Here $\ind_{\{t>r\}}$ is the characteristic function for the set where $t>r$ and
    $$S^0(t,r) = \frac{t+r}{r} \ln\Big( \frac{\lr{t+r}}{\lr{t-r}}\Big). $$
\end{Theorem}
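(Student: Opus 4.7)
The plan is to extract, from the covariant wave equation $D^\alpha D_\alpha\phi=0$ and the Lorenz-gauge wave equations $\Box A_\mu = -J_\mu$, transport-type equations for the rescaled quantities $r\phi$, $rA_L$, $rA_{S_B}$ and $rA^{mod}_{\ull}$ along the outgoing null direction $L$, and then integrate those equations along outgoing cones to construct the limits as Cauchy limits. The modifications (the charge phase for $\phi$, the integral subtraction for $A_{\ull}$) are chosen precisely to remove non-integrable Coulomb/null sources before integrating. All estimates rely on the weighted pointwise decay and vector-field bounds of \cite{LS06} together with a standard $\epsilon$-smallness bootstrap exploiting \eqref{eq:inidatanorm}--\eqref{eq:sandgammacond}.

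First I would expand $D^\alpha D_\alpha \phi = 0$ in the Lorenz gauge to obtain $\Box\phi = -2iA^\alpha\partial_\alpha\phi + A^\alpha A_\alpha\phi$ and, using the identity $r\Box f = -L\ull(rf) + r^{-1}\Delta_\omega f$ together with the frame decomposition $A^\alpha\partial_\alpha = -\tfrac12 A_L\ull - \tfrac12 A_{\ull}L + A^{S_B}\partial_{S_B}$, rewrite it as
\begin{equation*}
(L + iA_L)\ull(r\phi) = -iA_L\phi - irA_{\ull}L\phi + 2irA^{S_B}\partial_{S_B}\phi - rA^\alpha A_\alpha\phi + r^{-1}\Delta_\omega\phi .
\end{equation*}
All terms on the right have integrable pointwise decay in $t$ along outgoing $L$-orbits by \cite{LS06}, either because a good null component ($A_{S_B}$) or a good derivative ($L\phi$, $\snab\phi$) is present, or because of overall cubic/angular decay. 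The obstruction is the long-range term $iA_L\ull(r\phi)$ on the left, since $A_L$ carries the non-integrable Coulomb tail $\frac{\mathbf{q}}{4\pi r}$ forced by the conserved charge $\int J_0\,dx = \mathbf{q}$. Choosing $\psi(r) = \frac{\mathbf{q}}{4\pi}\ln(1+r)$ so that $L\psi = \frac{\mathbf{q}}{4\pi(1+r)}$ matches this tail, the change of unknown $\widetilde\phi = e^{i\psi}\phi$ replaces $A_L$ in the coefficient by the integrable Coulomb remainder $A_L - L\psi = O(\langle t+r\rangle^{-1-\gamma})$; a self-consistent asymptotic computation shows that once the ansatz $\phi = r^{-1}e^{-i\psi}\Phi + (\text{lower order})$ is inserted, the leading $r^{-1}$ contributions in $L(r\widetilde\phi)$ cancel, leaving $r^{-2}$ decay along outgoing cones. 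Integrating in $t$ produces the limit $\Phi_0(q,\omega)$ at the stated rate $\langle t+r\rangle^{1/2-(s+\gamma)}$.

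For the gauge I treat the null-frame components separately. For $A_{S_B}$ and $A_L$, the sources $J_{S_B} = \Im(\phi\,\overline{D_{S_B}\phi})$ and $J_L = \Im(\phi\,\overline{D_L\phi})$ contain the good factors $D_{S_B}\phi$ and $D_L\phi$ (classical null structure) and hence give $L(rA_{S_B})$ and $L(rA_L - \frac{\mathbf{q}}{4\pi})$ integrable decay, yielding the limits $\mathcal{A}_{S_B}$ and $rA_L\to \frac{\mathbf{q}}{4\pi}$ (the normalization coming from conservation of charge). For $A_{\ull}$ the source $J_{\ull} = \Im(\phi\,\overline{D_{\ull}\phi})$ contains the bad derivative $\ull\phi$. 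Substituting the asymptotic $\phi\sim r^{-1}e^{-i\psi}\Phi_0$ (the phases cancel in $\phi\overline{D_{\ull}\phi}$) shows that $J_{\ull}(t,r\omega) = r^{-2}\mathcal{J}_{\ull}(r-t,\omega) + (\text{better})$ with $\mathcal{J}_{\ull} = -2\Im(\Phi_0\overline{\partial_q\Phi_0})$. Feeding this leading piece through the Kirchhoff spherical-means representation of the retarded fundamental solution for $\Box$ produces an honest logarithm, namely the explicit correction $\frac{1}{2r}\int_{r-t}^\infty \mathcal{J}_{\ull}(\eta,\omega)\ln\!\bigl(\frac{\eta+t+r}{\eta+t-r}\bigr)d\eta$. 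Subtracting it defines $A^{mod}_{\ull}$ and yields a transport equation for $rA^{mod}_{\ull}$ with integrable source, producing the limit $\mathcal{A}^0_{\ull}(q,\omega)$ at the same rate.

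The main obstacle is closing the estimates self-consistently: $\Phi_0$ enters $\mathcal{J}_{\ull}$, which controls $A^{mod}_{\ull}$, which feeds back into the equation for $\phi$ through the $A_{\ull}L\phi$ term; moreover the Coulomb heuristic for $A_L$ must be justified not only in the exterior $r>t/2$ but across the light cone, producing the interior correction $\epsilon\langle t+r\rangle^{-1}\langle t-r\rangle^{3/2-s}S^0(t,r)\ind_{\{t>r\}}$ in the stated bound. This is handled by a continuity/bootstrap argument: the smallness \eqref{eq:inidatanorm}, the weighted vector-field norms of \cite{LS06}, and the interplay \eqref{eq:sandgammacond} between $s$, $\gamma$, and $s_0$ balance the logarithmic loss incurred on each integration step against the weight available from the initial data, so that all error integrals converge with the uniform rate $\langle t+r\rangle^{1/2-(s+\gamma)}$.
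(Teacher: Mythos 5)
Your overall strategy -- transport equations for $r\phi$, $rA_T$ along $L$, integrating factors to absorb the Coulomb tail, and an integral subtraction for $A_{\ull}$ -- is indeed the plan of the paper. But two load-bearing technical steps are missing, and each one would sink an attempt to turn your sketch into a proof.

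First, you repeatedly invoke ``decay from [LS06]'' for components of $A_\mu$, e.g.\ to conclude that the term $rA_{\ull}L\phi$ in the $\phi$-transport equation is integrable along $L$-orbits. But [LS06] works entirely with the gauge-invariant quantities $F$ and $D\phi$; it supplies \emph{no} bounds whatsoever on the potential $A_\mu$ in Lorenz gauge. The decay of $A_\mu$ (with its inevitable logarithmic loss, encoded by $S^0(t,r)$) has to be established from scratch by splitting $A^1_\mu = A^0_\mu + A^2_\mu$, applying the radial representation \eqref{eqn:pointwise radial wave} to the inhomogeneous piece driven by $J_\mu$, and then running a second pass to recover the better decay of the tangential components $A_L, A_{S_B}$. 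This staged sequencing -- weak decay for all of $A$, then improved tangential decay, then the $\phi$-asymptotics, then $A_{\ull}$ -- is what your ``standard $\epsilon$-bootstrap'' has to actually be; as written it elides the whole content.

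Second, and more fatally, you treat $L\ull(rA_T) = rJ_T$ as if the null frame commuted with $\Box$. It does not: contracting $\Box A_\mu = -J_\mu$ with $L^\mu$ or $S_B^\mu$ produces commutator terms involving the bad component $A_{\ull}$ and angular Laplacians. For $A_L$ the correct statement is
\[
L\big(r\ull(rA_L)\big) + L(rA^1_{\ull}) = r^2 J_L + \Delta_{\mathbb{S}^2} A^1_L,
\]
and the only reason the $A^1_{\ull}$ and $\Delta_{\mathbb{S}^2}A^1_L$ terms are controllable is a special cancellation coming from the Lorenz gauge condition (which enters through $\Delta_{\mathbb{S}^2} A_L = L^\mu\Delta_{\mathbb{S}^2}A_\mu + L(rA_{\ull}) + \ull(rA_L)$). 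Without this identity, the claim that $L(rA_L - \tfrac{\mathbf{q}}{4\pi})$ has integrable decay is false on its face: the naive commutator produces $A_{\ull}$, which carries the worst log-lossy decay and is exactly what you cannot afford without the gauge structure. A similar (milder) angular-Laplacian commutator appears for $A_{S_B}$. Any complete proof must exhibit these identities; your proposal does not, and this is not a detail that can be deferred to a bootstrap.

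Beyond these two gaps, your heuristic of ``inserting the ansatz $\phi = r^{-1}e^{-i\psi}\Phi$'' to see the cancellation is only a guide: the actual argument must avoid circularity by proving decay for $A_L - \tfrac{\mathbf{q}}{4\pi r}$ \emph{before} it can be fed into the $\phi$-transport equation, and must account for the mismatch between $\ull(r e^{i\psi}\phi)$ and $e^{i\psi}\ull(r\phi)$ (they differ by $-i\frac{\mathbf{q}}{4\pi(1+r)}e^{i\psi}r\phi$, which lands in the source $F$). The representation you need for $r\phi$, $rA_T$ is the radial $1{+}1$ formula \eqref{eqn:pointwise radial wave} for $ru$, not Kirchhoff spherical means; that is what makes the integral subtraction defining $A^{mod}_{\ull}$ come out as the stated logarithm. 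With the two missing structural inputs supplied -- the independent derivation of decay for $A_\mu$ in Lorenz gauge, and the Lorenz-gauge commutator identity for the frame components -- your outline lines up with the argument in the paper.
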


Note that for free solutions to the wave equation, $\Box u = 0$, on $\RR^{1+3}$, the limit $\lim_{t \to \infty} (r u)(t, (q+t)\omega)$ exists, and the difference decays like $r^{-2}$, i.e. one order better than the decay of $u$ in the exterior $t< \frac{1}{2}|x|$. Hence the previous theorem shows that at null infinity,  $A_{S_B}$ behaves like a free wave, $A_L$ converges to the charge $\mathbf{q}$, and when compared to the evolution of a free wave, the scalar field $\phi$ has phase correction at null infinity. On the other hand the bad component of gauge $A_{\underline{L}}$ only behaves like a free wave after subtracting off a term with a log growth. In particular, $A_{\underline{L}}$ has a log loss of decay when compared to the free wave equation.

\subsubsection{Asymptotics at timelike infinity in the interior}
We have the following asymptotics at time like infinity:
\begin{Theorem}\label{thm:asymp in interior} Let $k\g 7$ with $k\in\mathbb{N}$ and assume that \eqref{eq:inidatanorm} and \eqref{eq:sandgammacond} hold.
Given $y \in \mathbb{B}^3=\{y\in \mathbb{R}^3; \,|y|<1\}$, we have the limit
\[
\lim_{t \to \infty}tA_\mu(t, ty ) = \mc{K}_\mu(y),\quad\text{where}\quad \mc{K}_\mu(y)=\frac{1}{4\pi}\int_{-\infty}^\infty\int_{\mathbb{S}^2}\frac{\mc{J}_\mu(q, \omega)}{1-\lr{y, \omega}}\, dS(\omega) \, dq \,,
\]
where $\mc{J}_\mu$ is the asymptotic source term
\begin{equation*}
\mc{J}_\mu(q,\omega) = L_\mu(\omega)\Im\left(\Phi_0(q, \omega)\overline{\partial_q\Phi_0(q, \omega)}\right).
\end{equation*}
Here $L_\mu(\omega)=m_{\mu\nu} L^\nu(\omega)$, where
$L=\pa_t+\pa_r=L^\nu(\omega)\pa_\nu$.

Additionally, we have the following bound on the difference when $|x|<t$
\begin{equation}\label{eq:interassymest}
\left|\,tA_\mu(t, x) - \mc{K}_\mu(x/t)\right| \lesssim  \epsilon\big|\,t-|x|\big|^{\,1-2s} S^0+ \epsilon\big|\,t-|x|\big|^{\,1/2-s-\gamma}.
\end{equation}
\end{Theorem}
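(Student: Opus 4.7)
The plan is to derive both the convergence and the pointwise bound from the retarded fundamental solution of $\Box$ combined with the null-infinity asymptotics in Theorem \ref{thm:asymp in exterior}. Since $\Box A_\mu = -J_\mu$, I would write
\begin{equation*}
A_\mu(t,x) = A_\mu^{\text{free}}(t,x) + \frac{1}{4\pi}\int_{\RR^3}\frac{J_\mu(t-|x-z|,z)}{|x-z|}\, dz,
\end{equation*}
with $A_\mu^{\text{free}}$ carrying the data (including $A_0(0)$ reconstructed from the constraints). The weighted data bound \eqref{eq:inidatanorm} is strong enough that the classical interior dispersive decay for $A_\mu^{\text{free}}$ places $t|A_\mu^{\text{free}}(t,ty)|$ below the right-hand side of \eqref{eq:interassymest}, so the problem reduces to analyzing the retarded integral.

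Next, for $x=ty$ with $|y|<1$, I would parametrize $z = r\omega$, $\omega\in\mathbb{S}^2$, and change the radial variable $r$ to the outgoing null coordinate $q = r-\tau$, where $\tau = t - \rho$ and $\rho = |ty-r\omega|$. From $\rho = q+t-r$ one reads off the exact algebraic identity
\begin{equation*}
\rho + r - t\lr{y,\omega} \;=\; q + t\bigl(1-\lr{y,\omega}\bigr),
\end{equation*}
which reproduces, before any limit is taken, precisely the denominator appearing in $\mc{K}_\mu(y)$. Coupled with the Jacobian $\p r/\p q = \rho/[q+t(1-\lr{y,\omega})]$ and $dz = r^2\, dr\, dS(\omega)$, the retarded integral takes the clean form
\begin{equation*}
tA_\mu^{\text{source}}(t,ty) \;=\; \frac{1}{4\pi}\int_{\mathbb{S}^2}\int \frac{t\, r^2\, J_\mu(\tau,r\omega)}{q + t(1-\lr{y,\omega})}\, dq\, dS(\omega),
\end{equation*}
with $q\in[-t(1-\lr{y,\omega}),\infty)$, a range that fills $\RR$ as $t\to\infty$.

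To evaluate the limit, I would use Theorem \ref{thm:asymp in exterior} in the form $\phi \sim e^{-i\frac{\mathbf{q}}{4\pi}\ln(1+r)}\Phi_0(q,\omega)/r$. The phase is real, so in $\Im(\phi\overline{\p_\mu\phi})$ it drops out up to the term $(\p_\mu\psi)|\phi|^2 = O(r^{-3})$; similarly $A_\mu|\phi|^2 = O(r^{-3})$. The leading contribution is then computed from the null-frame identity $\p_\mu(\Phi_0/r) = L_\mu(\omega)\p_q\Phi_0/r + O(r^{-2})$, giving
\begin{equation*}
r^2 J_\mu(\tau, r\omega) \;=\; \mc{J}_\mu(q,\omega) + E(t,r,\omega),
\end{equation*}
where $E$ is controlled in the exterior by the pointwise convergence rate in Theorem \ref{thm:asymp in exterior} and in the interior by the decay estimates of \cite{LS06}. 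Dominated convergence (integrability of $\mc{J}_\mu(\cdot,\omega)$ in $q$ being inherited from the weighted decay of $\Phi_0$) then delivers the main term $\mc{K}_\mu(y)$, and inserting the pointwise bounds on $E$ yields the two summands of \eqref{eq:interassymest}.

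The principal obstacle will be the quantitative error bookkeeping in the last step: one must translate the exterior rate $\lr{t+r}^{1/2-(s+\gamma)}$ and the interior logarithmic factor $S^0$ into the $|t-|x||$-scaled form on the right of \eqref{eq:interassymest}, carefully tracking the Jacobian weight $t/[q+t(1-\lr{y,\omega})]$, which is uniformly $\lesssim 1/(1-|y|)$ for $|y|<1$. The near-tip region $|x-z|$ small, where the null asymptotic is not directly valid, will need the cruder pointwise interior decay for $J_\mu$ from \cite{LS06} to be absorbed into the error.
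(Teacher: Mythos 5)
Your route is genuinely different from the paper's. The paper does not manipulate the retarded integral for $A_\mu$ directly: it first splits $A^1_\mu = A^0_\mu + A^2_\mu$ (free plus inhomogeneous), then approximates the source by a cut-off asymptotic source $J^\infty_\mu = r^{-2}\mc{J}_\mu\,\chi_0\big(\lr{r-t}/(t+r)\big)$, compares $A^2_\mu$ to the solution $A^{as}_\mu$ with source $J^\infty_\mu$ via Lemma~\ref{lem:Jest} and Lemma~\ref{lem:inhomwaveeqdecay}, cites Proposition~23 of \cite{L17} (Proposition~\ref{prop:asymptotic source estimate}) for the explicit formula $A^{ex}_\mu$, and then deforms $A^{ex}_\mu \to A^{ex,1}_\mu \to A^{ex,2}_\mu \to A^{ex,3}_\mu \to A^{ex,\infty}_\mu$ step by step. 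Your change of variables $r\mapsto q=r-t+\rho$ is precisely the derivation underlying that cited proposition, so in effect you are proposing to re-prove \cite{L17} Prop.~23 from scratch. That is legitimate, but it means you own the parts of the bookkeeping that the paper outsources.

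Two of your intermediate claims are wrong as stated, and both are exactly where the delicate work lives. First, the $q$-range: at $r=0$ one has $\rho = t|y|$ and hence $q = -t(1-|y|)$, independent of $\omega$; the range is $[-t(1-|y|),\infty)$, not $[-t(1-\lr{y,\omega}),\infty)$. Second, and more seriously, the Jacobian weight $t/\big(q+t(1-\lr{y,\omega})\big)$ is \emph{not} uniformly $\lesssim 1/(1-|y|)$. The denominator equals $\rho + r - t\lr{y,\omega}$, which at the lower endpoint $r=0$ is $t(|y|-\lr{y,\omega})$ and hence tends to $0$ as $\omega \to y/|y|$. This is the coordinate degeneracy at the apex of the backward cone, and it is precisely what the cutoff $\chi_0$ in the paper's $J^\infty_\mu$ (and in $A^{ex}_\mu$) is designed to remove: the explicit formula in Proposition~\ref{prop:asymptotic source estimate} carries a cutoff $\chi_0(\lr{q}/(t+|x|))$ for exactly this reason. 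Without some analogous truncation, your claimed uniform domination fails, and the ``dominated convergence'' step is not justified. A further point you gloss over: your $A^{\text{free}}_0$ is built from the raw data, but $a_0$ has a long-range $\mathbf{q}/(4\pi r)$ tail that does \emph{not} satisfy the weighted Sobolev bound; the paper handles this by working with $A^1_\mu$ (whose data is the modified $a_0 - \chi(r)\,\mathbf{q}/(4\pi r)$), and then observing that the subtracted term $\delta_{\mu0}\chi(r-t)\mathbf{q}/(4\pi r)$ vanishes identically for $r<t+1/2$. Your argument does arrive at the right interior statement, but only because that cutoff happens to vanish there, a fact you would need to state. Finally, you correctly identify the quantitative error estimate (producing the two terms $\epsilon|t-|x||^{1-2s}S^0$ and $\epsilon|t-|x||^{1/2-s-\gamma}$) as ``the principal obstacle,'' but you do not attempt it; in the paper this is essentially all of Lemma~\ref{lem:asymptotic source estimate} and the chain of $A^{ex,k}$ comparisons, which is where the logarithmic factor and the $|t-r|$-weights actually come from. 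So while the strategy is sound and recovers the correct limit, the proposal as written has real gaps: the apex degeneracy is unaddressed, the uniform majorant is incorrect, and the error bound is left open.
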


\begin{Remark} The second term in the right of \eqref{eq:interassymest} corresponds to a solution of a homogeneous wave equation that has same kind of asymptotics at null infinity as given in Theorem \ref{thm:asymp in exterior}, compare Lindblad-Schlue \cite{LS17}.
\end{Remark}

\subsubsection{Improved interior decay for the field $\phi$} Our method starts from the decay estimates in \cite{LS06}, see Theorem \ref{thm:global stability}, but as byproduct we also get slightly improved interior decay for the field $\phi$ when compared to \cite{LS06}, see Proposition \ref{prop:improvedinteriordecayforfield}.

\subsection{Outline of paper}

The article is organised as follows. In Section \ref{sec:weak null} we show that the M-K-G system satisfies the weak null condition of Lindblad-Rodnianski \cite{LR05,LR10}, and sketch the proof of Theorem \ref{thm:asymp in exterior}. In Section \ref{sec:gauge invariant est} we recall the gauge invariant estimates of \cite{LS06}, and deduce decay of the current $J_\mu$ and the field $\phi$. To apply the decay estimates in \cite{LS06}, we need to show that we can construct a compatible data set from the data in Theorem \ref{thm:asymp in exterior}, this is done in \ref{sec:compatible data}. In Section \ref{sec:decay for A} we prove decay estimates for the potential $A_\mu$, while in Sections \ref{sec:asymptotics for tang A} and \ref{sec:asymp for phi} we prove the exterior asymptotics for the tangential components of $A_\mu$, and the field $\phi$. The exterior asymptotics for the non-tangential component of $A_\mu$ is contained in Section \ref{sec:asymp for non-tang A}. Finally in Section \ref{sec:interior asymp}, we prove the interior asymptotics contained in Theorem \ref{thm:asymp in interior}. The appendix, Section \ref{sec:appendix}, contains a slight refinement of the radial estimate from \cite{L17}.

\section{The asymptotic system for M-K-G and the weak null condition}\label{sec:weak null}

\subsection{The asymptotic system at null infinity} The asymptotic system introduced by H{\"o}rmander \cite{H97} is obtained by plugging the expansions
\begin{equation*}
A_\mu(t,r\omega)\sim \mc{A}_\mu(r-t,\omega,\ln{r})/r, \qquad \phi(t,r\omega)\sim \Phi(r-t,\omega,\ln{r})/r
\end{equation*}
into the equations and neglecting angular derivatives and derivatives tangential to the light cones
\begin{equation*}
\aligned \Box\, \phi\! &=\!r^{-1}(\pa_t+\pa_r)(\pa_r-\pa_t)
(_{\!}r\phi_{\!})+r^{-2}\times \text{angular
derivatives},\\
\pa_\mu&=\tfrac{1}{2}\hat{\omega}_\mu(\pa_r-\pa_t)+\text{tangential
derivatives},\qquad \hat{\omega}=(-1,\omega),
\endaligned
\end{equation*}
 in which case one gets the asymptotic system
\begin{equation*}
(\pa_t+\pa_r)(\pa_r-\pa_t) \mc{A}_\mu\sim\frac{1}{r} \frac{ L_\mu}{2} \Im(\Phi \overline{(\pa_r-\pa_t) \Phi})
\end{equation*}
and if we write
\[
D^\alpha D_\alpha \phi = \Box\, \phi + 2iA^\alpha \partial_\alpha\phi - A^\alpha \! A_\alpha\phi=0.
\]
we also get
\begin{equation*}
(\partial_t+\partial_r)(\partial_r-\partial_t)\Phi\sim -i \frac{1}{r}\mc{A}_L (\partial_r-\partial_t)\Phi.
\end{equation*}
If we also introduce the independent variables
$$
q=r-t,\qquad\text{and}\qquad s=\ln{r}
$$
and neglect the lower order term when $(\pa_t-\pa_r)$ is falling on $\ln{r}$ we get the asymptotic system
\begin{align}\label{eq:AAssSyst}
\pa_s\pa_q \mc{A}_\mu&= \frac{ L_\mu}{2} \Im(\Phi \overline{\pa_q\Phi})\\
\pa_s\pa_q \Phi &= -i \mc{A}_L \pa_q \Phi.\label{eq:PhiAssSyst}
\end{align}
The system satisfies the {\it weak null condition} of Lindblad-Rodnianski \cite{LR05,LR10} if the asymptotic system above has a
global solution that does not grow too much in $s$. We will show that this is the case below.

\subsubsection{The asymptotic system for the good components of $A$}
Given this framework, we can look at the asymptotic system for components of $A$. Since the frame commutes with the radial part of the wave operator above we have
\[
\pa_s\pa_q \mc{A}_\mc{T} =0,\qquad \mc{T} \in \{L,S_1,S_2\},
\]
from which it follows that $ \mc{A}_{\mc{T}} $ is a function of $r-t$ and $\omega$ only.
In fact we will be able to prove that
\begin{equation*}
\mc{A}_L=\frac{1}{4\pi} \mathbf{q}
\end{equation*}
where $\mathbf{q}$ is the charge, see section \ref{section:charge}.

\subsubsection{The asymptotic system for the field}
We are now ready to look at the asymptotic system
where all other terms involve nice derivatives, which we can assume do not factor into the system. Asymptotically, we note that if we take the real and imaginary parts of $\Phi$, and noting that $\mc{A}_L=\frac{1}{4\pi} \mathbf{q}$ is real and constant,  we get by \eqref{eq:PhiAssSyst} a function $\Phi_0$ such that
\begin{equation*}
\Phi(q,\omega,s)= e^{-i\frac{1}{4\pi} \mathbf{q} s} \Phi_0(q,\omega)
\end{equation*}

\subsubsection{The asymptotic system for the bad component of the potential}
By \eqref{eq:AAssSyst} we have
\begin{equation*}
\pa_s\pa_q  \mc{A}_\ull=\mc{J}_L,\quad\text{where}\quad \mc{J}_L=\Im(\Phi \overline{\pa_q  \Phi})=\Im(\Phi_0 \overline{\pa_q  \Phi_0})
\end{equation*}
from which it follows that
\begin{equation*}
\mc{A}_\ull(q,\omega,s)=s\int_q^\infty \mc{J}_L(\rho,\omega)\, d\rho+\mc{A}_\ull^0(q,\omega).
\end{equation*}

\subsection{The asymptotics at time like infinity} To precisely analyse the asymptotics of $A_\alpha$, we let $A^2_\alpha$ denote the inhomogeneous component of $A_\alpha$. Thus $A^2_\alpha$ is the solution to
\begin{equation*}
\Box A^2_\alpha= - J_\alpha,
\end{equation*}
with vanishing initial data. From the asymptotics in the exterior region $t<2 r$, we have
\begin{equation*}
J_\alpha(t,r\omega) \sim \frac{1}{r^2}\mc{J}_\alpha(r-t,\omega),\quad \text{where}\quad
|\mc{J}_\alpha(q,\omega)|\lesssim \langle q\rangle^{-2s}\langle q_+\rangle^{-2\gamma}.
\end{equation*}
Since $s>1/2$, the asymptotic current $\mc{J}_\alpha(q,\omega)$ is integrable with respect to $q$ and concentrated close to the light cone $q=0$. Therefore for an observer far away from the light cone  $t-r \gg 0$ it looks
like the total mass of the source comes from the light cone $q=0$. Therefore is some rescaled variable
$y=x/t$ we have as measures
\begin{equation*}
 \frac{1}{r^2}\mc{J}_\alpha(r-t,\omega)\sim \frac{1}{r^2}\delta(t-r)\mc{M}_\alpha(\omega),\quad\text{where}\quad  \mc{M}_\alpha(\omega)= \int_{-\infty}^{+\infty} \mc{J}_\alpha(q,\omega) dq,
\end{equation*}
where $\delta(s)$ is the delta function, see Lindblad \cite{L90}. Moreover by \cite{L90}, with $\mc{K}_\alpha$ as in Theorem \ref{thm:asymp in interior}, we have in the sense of distributions
\begin{equation*}
\Box\Big( \frac{1}{t}\mc{K}_\alpha(x/t)\Big)= \frac{1}{r^2}\delta(t-r)\mc{M}_\alpha(\omega).
\end{equation*}

\subsection{The charge contribution}\label{section:charge}
To understand how the charge may effect the evolution, note that the data for the gauge $A_\mu$, $(a_\mu, \dot{a}_\mu)$, satisfies the constraint
    $$  \Delta a_0 = \p^j \dot{a}_j - J_0, \qquad \dot{a}_0 = \p_j a_j.   $$
In particular, if $\int_{\RR^3} J_0 \not = 0$, then we can only expect the decay $a_0 \approx \frac{1}{r}$ as $r \rightarrow \infty$. This causes problems as to bootstrap decay for the gauge fields $A_\mu$, we require additional decay in the exterior region $r > t$. The way to proceed, following Lindblad-Sterbenz \cite{LS06}, is to subtract off the worst decaying component of the data. More precisely, we define the modified gauge field $A^1_\mu$ as
    $$ A^1_\mu = A_\mu - \delta_{\mu 0} \chi( r-t) \frac{\mathbf{q}}{4 \pi r} .$$
where $\chi$ is a smooth cutoff such that $\chi(s) = 1$ if $s\g 1$, and $\chi(s) = 0$ if $s<{1}/{2}$. Then
    $$ \Box A^1_\mu = -J_\mu$$
so the modified fields satisfy the same equation as $A$ and a computation shows that we have improved decay
    $$ A^1_0(0, x) \approx r^{-2}$$
as $r\rightarrow \infty$ (the remaining data has the same decay as the original decay). In particular, in the following we use the decay bounds
    \begin{equation*}
|Z^I A_\delta^1| \lesssim \langle r\rangle^{-s-1/2-\gamma},\qquad\text{when}\quad  t=0
    \end{equation*}
for $Z \in \{ \p_\mu, \Omega_{\mu \nu}, S\}$ where $\Omega_{\mu \nu} = x_\mu \p_\nu - x_\nu \p_\mu$, $S= x^\mu \p_\mu$, which follow from the finiteness of the weighted Sobolev norms assumed in Theorem \ref{thm:asymp in exterior}.

\subsection{Key steps in the proof of Theorem \ref{thm:asymp in exterior}}
We now briefly outline the key steps in the proof of Theorem \ref{thm:asymp in exterior}.

\textbf{Step 1:} The first step is to recall the pointwise decay estimates obtained in Lindblad-Sterbenz \cite{LS06} (see (8.10) there)  which give
    \begin{equation*}\label{eq:Jestiamteintro}
   | Z^I J_\mu| \lesssim \epsilon^2\lr{t+r}^{-2}\lr{t-r}^{-2s} \lr{(r-t)_+}^{-2\gamma},
\end{equation*}
with the tangential components satisfying the stronger bound
\begin{equation*}\label{eq:goodJestiamteintro}
\begin{split}
|Z^K J_L| &\lesssim \epsilon^2 \lr{t+r}^{-\frac{5}{2}-s}\lr{t-r}^{\frac{1}{2}-s} \lr{(r-t)_+}^{-2\gamma},\\
 |Z^K J_{S_B}| &\lesssim \epsilon^2 \lr{t+r}^{-3}\lr{t-r}^{1-2s} \lr{(r-t)_+}^{-2\gamma}
\end{split}
\end{equation*}
for $Z \in \{ \p_\mu, \Omega_{\mu \nu}, S\}$ where $\Omega_{\mu \nu} = x_\mu \p_\nu - x_\nu \p_\mu$, $S= x^\mu \p_\mu$ (here $x^0 = t$) and the parameters $s, \gamma$ satisfy
    $$ s+\gamma < {3}/{2}, \qquad \gamma>0, \qquad s>{1}/{2}.$$
Together with the positivity of the fundamental solution to wave equation, and the identity
    \begin{equation}\label{eqn:pointwise radial wave}
    \begin{split}
        (ru)(t,x) = \ind_{\{t<r\}} (ru)\big( 0, (r-t)\omega &\big) + \frac{1}{2} \int_{|t-r|}^{t+r} ( \p_t + \p_r)(ru)(0, \xi \omega) d\xi
            \\
            &+ \frac{1}{4} \int_{|t-r|}^{t+r} \int_{-\xi}^{t-r} (\p_t - \p_r)(\p_t + \p_r)(ru)\big( \tfrac{1}{2}(\xi +\eta), \tfrac{1}{2}(\xi-\eta) \omega \big) d\eta d\xi
    \end{split}
    \end{equation}
(here $ x = r \omega$, $\omega \in \mathbb{S}^2$, $r>0$) we then deduce the weak decay bounds
    $$
    |Z^K A^1_\mu(t, x)| \lesa \epsilon^2\lr{t+r}^{-1} \lr{(r-t)_+}^{1-2(s+\gamma)} S^0(t,r)+ \epsilon\langle t+r\rangle^{-1} \langle r-t\rangle^{\frac{1}{2} -(s+\gamma)}
    $$
where
    $$
    S^0(t,r) = \frac{t+r}{r} \ln\Big( \frac{\lr{t+r}}{\lr{t-r}}\Big)\lesssim\frac{1}{\epsilon}
    \frac{\lr{t+r}^\epsilon }{\lr{t-r}^{\epsilon}},
    $$
and thus we have a loss of decay when compared to the free wave equation. The estimate on $S^0(t,r)$ for small $\epsilon$ comes from the inequality
\[
\ln\left(\frac{\lr{t+r}}{\lr{t-r}}\right) = \frac12\ln\left(1 + \frac{4tr}{1+(t-r)^2}\right) \leq \frac{2tr}{1+(t-r)^2}
\]
when $r < t/2$, and the fundamental theorem of calculus when $r>t/2$.

 Once we have the decay of all components of $A_\mu$, we can improve this decay for the tangential components of the field $A_\mu$ by using \eref{eq:goodJestiamte} although this requires a loss of derivatives (which is not important here).

\textbf{Step 2:} The second step is deduce the asymptotics for the best component of the gauge $A_L$. More precisely, a computation shows that $A_L$ satisfies
    $$ L( r \ull( rA_L) ) = r^2 J_L + \Delta_{\mathbb{S}^2} ( A^1_L)- L(r A^1_{\ull}). $$
Exploiting the weak decay bounds, together with the improved decay for $J_L$, the right hand side has sufficient decay, and by integrating along $t\pm r$ we obtain
    $$ | r A_L - {\frac{1}{4\pi} \mathbf{q}}   | \lesa \epsilon\lr{t+r}^{\frac12 - (s+\gamma)} + \epsilon\frac{\lr{t-r}}{\lr{t+r}}S^0(t, r)\ind_{\{ t > r \}}.  $$
To obtain the asymptotics for the remaining tangential component, $A_{S_B}$, a slightly easier argument suffices.

\textbf{Step 3:} The third step is to deduce the behaviour of $\phi$ as $t+r \rightarrow \infty$. This follows by observing that we have an identity of the form
    $$ L \ull\big( \exp{ (i {\frac{1}{4\pi} \mathbf{q}}   \ln{r})} r \phi \big) = G$$
with $G$ having sufficient decay to deduce that, via \eref{eqn:pointwise radial wave}, the existence of a limit along light cones.

\textbf{Step 4:} The final step is to exploit the refined asymptotics in Lindblad \cite{L17} to obtain the asymptotic behaviour of the non-tangential component $A_{\ull}$, and the limits for the current $J_\mu$ which are required in the asymptotics in the interior region $|x|<t$.

\section{The Gauge invariant estimates}\label{sec:gauge invariant est}
The initial data for the system \eqref{eq:waveequation} can be written in the form:
\begin{subequations}
\begin{align}
        F_{0i}(0)  \ &= \ E_i  \ ,
    &\sF_{0i}(0)  \ &= \  H_i  \ , \\
    \phi(0) \ &= \ \phi_0 \ ,
    &D_t \phi\, (0) \ &= \dot{\phi}_0.
\end{align}
\end{subequations}
In the above formulas $\sF$ denotes the Hodge dual of $F$ which
is given by the expression $\sF_{\alpha\beta} = \frac{1}{2}
\in_{\alpha\beta}^{\ \ \ \gamma\delta}F_{\gamma\delta}$. Here
$\in_{\alpha\beta\gamma\delta}$ denotes the volume form on Minkowski
space.
Now, from the form of the system \eqref{eq:waveequation} it is easy to see that
this initial data cannot be specified freely. It must also satisfy the compatibility
conditions:
\begin{align}
        \nabla^i E_i \ &= \ \Im [ \phi_0 \, \overline{\dot{\phi}_0
    }] \ ,
    &\nabla^i H_i \ &= \ 0 .
    \label{compat_conds}
\end{align}
The first equation comes from expanding $\partial^iE_i = \partial^\alpha F_{0\alpha} - \partial^0F_{00}$ and noting that the latter term is 0 from antisymmetry of $F$. The second equation comes from expanding and applying the identity
\[
(dF)_{ijk} = 0.
\]

We will call a data set $(E,H, \phi_0 , \dot{\phi}_0 )$ which
satisfies \eqref{compat_conds} \emph{admissible}.

To state the main result from \cite{LS06}, we first define the covariant weighted Sobolev spaces $H^{k,s}_{cov}$ using the norm
    \begin{equation*}
        \lp{\psi}{H_{cov}^{k,s_0}(\RR^3)}^2 \ = \ \sum_{|I|\leqslant k}\
    \int_{\RR}\ (1 + r^2)^{s_0 + |I|}\ |\underline{D}^I \psi |^2
    dx  \label{scalar_initial_sob}
    \end{equation*}
where $\underline{D} = \nabla + i \underline{a}$ with $\underline{a} = (a_1, a_2, a_3)$ denoting the spatial components of the gauge at $t=0$.

\begin{Theorem}[Global Stability of CSF Equations {\cite{LS06}}]\label{thm:global stability}
Let $k\geq 2$ with $k\in\mathbb{N}$, and let ${s_0^\prime} = s+\gamma$
be given such that ${s_0^\prime} < \frac{3}{2}$, $\gamma>0$, and
$\frac{1}{2}<s\leq 1$. Let $(E,H,\phi_0,\dot{\phi}_0)$ be an admissible
initial data set, and define the \emph{charge} to be the value:
\begin{equation*}
        {\mathbf{q}} \ =\ \int_{\RR^3}\ \Im [\phi_0\, \overline{\dot{\phi}_0}] dx\ . \label{the_charge}
\end{equation*}
Then there exists a universal constant $\mathcal{E}_{k,s,\gamma}$,
which depends only on the parameters $k,s,\gamma$, such that if
$(E,H,\phi_0,\dot{\phi}_0)$ is an admissible initial data set which
satisfies the smallness condition:
\begin{equation*}
        \lp{E^{df}}{H^{k,{s_0^\prime}}(\RR^3)} + \lp{H}{H^{k,{s_0^\prime}}(\RR^3)} +
    \lp{\underline{D}\phi_0 }{H^{k,{s_0^\prime}}_{cov}(\RR^3)} +
    \lp{\dot{\phi}_0}{H^{k,{s_0^\prime}}_{cov}(\RR^3)} \ \leqslant \ \mathcal{E}_{k,s,\gamma}
    \ , \label{initial_smallness}
\end{equation*}
where $E = E^{df} + E^{cf}$ is the Hodge decomposition of $E$ into
its divergence free and curl free components (resp.), then there
exists a (unique) global solution to the system of equations
\eqref{eq:waveequation} with this initial data set such that if
$\{L,\bL,S_A\}$ denotes a standard spherical null frame, then the following
point-wise properties of this solution holds:
\begin{subequations}\label{F_peeling}
\begin{align}
        |\alpha| \ &\lesssim \ \mathcal{E}_{k,s,\gamma}\tp^{-s
        - \frac{3}{2}} \lr{(r-t)_+}^{-\gamma} \ ,\label{F_peeling1}\\
   |\balpha| \ &\lesssim \ \mathcal{E}_{k,s,\gamma } \tp^{-1}
        \tm^{-s-\frac{1}{2}}  \lr{(r-t)_+}^{-\gamma} \ , \label{F_peeling2}\\
    |\rho| \ &\lesssim \ {\mathbf{q}} \, r^{-2} \,
         \chi_{ 1 < t < r} \ + \ \mathcal{E}_{k,s,\gamma }
         \tp^{-1-s}\tm^{-\frac{1}{2}}\lr{(r-t)_+}^{-\gamma}  \ , \label{F_peeling3}\\
     |\sigma| \ &\lesssim \ \mathcal{E}_{k,s,\gamma }
     \tp^{-1-s}\tm^{-\frac{1}{2}} \lr{(r-t)_+}^{-\gamma} \ , \label{F_peeling4}
\end{align}
\end{subequations}
and:
\begin{subequations}\label{phi_peeling}
\begin{align}
         |\widetilde{D}_L\phi|  &\lesssim
    \mathcal{E}_{k,s,\gamma } \tp^{-s -
         \frac{3}{2}}\lr{(r-t)_+}^{-\gamma},\label{phi_peeling1}\\
    |D_{\bL}\phi| &\lesssim   \mathcal{E}_{k,s,\gamma }
    \tp^{-1}
        \tm^{-s-\frac{1}{2}}  \lr{(r-t)_+}^{-\gamma}, \label{phi_peeling2}\\
    |\sD\phi| &\lesssim\mathcal{E}_{k,s,\gamma }
     \tp^{-1-s}\tm^{-\frac{1}{2}} \lr{(r-t)_+}^{-\gamma} \!\! ,\label{phi_peeling3}\\
     |\phi| &\lesssim  \mathcal{E}_{k,s,\gamma }\tp^{-1}\tm^{-s+\frac{1}{2}}\cdot
     \lr{(r-t)_+}^{-\gamma} \!\!
     . \label{phi_peeling4}
\end{align}
\end{subequations}
Here we have set:
\begin{align}
        |\widetilde{D}_L\phi|^2 \ &= \
        |\frac{1}{r}D_L(r\phi)|^2\, \chi_{1 < t < 2r} +
    |D_L\phi|^2 \, \chi_{r < \frac{1}{2}t} \ ,
    &|\sD\phi|^2 \ &= \  \delta^{AB} D_{S_A}\phi \overline{D_{S_B}\phi} \
        , \notag
\end{align}
and $\underline{D}$ denotes the spatial part of the connection $D$.
Also, $(\alpha,\balpha,\rho,\sigma)$ denotes the components of the
null decomposition \eqref{null_decomp} of $F_{\alpha\beta}$.
\end{Theorem}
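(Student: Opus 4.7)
The plan is to establish the peeling bounds \eqref{F_peeling}--\eqref{phi_peeling} via a continuous bootstrap argument driven by a gauge-invariant energy method, working exclusively with the curvature $F_{\alpha\beta}$, the covariant derivatives $D_\alpha\phi$, and $\phi$ itself, all of which are invariant under $A\mapsto A+d\psi$, $\phi\mapsto e^{i\psi}\phi$. The backbone is the conformal (Morawetz) multiplier $K=\tfrac{1}{2}\big((t^2+r^2)\partial_t+2tr\partial_r\big)$ contracted against the energy-momentum tensor $Q_{\alpha\beta}$; since $\partial^\alpha Q_{\alpha\beta}=0$ and $K$ is conformal Killing, this produces a coercive weighted $L^2$ quantity involving $\tp^{2}|\alpha|^2$, $\tm^{2}|\balpha|^2$, $\tp^2|\widetilde{D}_L\phi|^2$, and matching weights for the remaining frame components---precisely the weights indicated by the right-hand sides of \eqref{F_peeling}--\eqref{phi_peeling}. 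This is the only first-order quantity that automatically sees the good null structure of the system.

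To promote this to higher derivatives and pointwise control, I would commute the equations with the covariant (twisted) Lie derivatives $\LieC{Z}$ for $Z$ drawn from the conformal Killing fields $\{\partial_\mu,\Omega_{\mu\nu},S\}$. The commutator formulas for $D^\alpha D_\alpha$ and for the Maxwell system produce terms schematically of the form $[D^\alpha D_\alpha,\LieC{Z}]\phi \sim \DT{Z}^{\alpha\beta} D_\alpha D_\beta \phi$, together with $F\cdot D\phi$ corrections coming from $[D_\alpha,\LieC{Z}]$. Closing a higher-order conformal energy then requires bounding these commutators in terms of weighted Sobolev norms of the same type, using the $\tp^{-1}$ or $\tm^{-1}$ decay of each $\DT{Z}$. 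A weighted Klainerman--Sobolev inequality on spheres $\{|x|=r\}$ upgrades the resulting $L^2$ control into the pointwise bounds with the stated powers of $\tp$ and $\tm$. The improved rates in \eqref{F_peeling1}, \eqref{F_peeling4}, \eqref{phi_peeling1}, and \eqref{phi_peeling3} for the tangential components then follow because each $L$- or $S$-contraction of $Q$ or of $\DT{Z}$ is quadratic in the already-tangential quantities; this is the manifestation of the null condition at the level of the stress tensor.

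The principal obstruction is that a non-zero charge forces $E^{cf}_i \sim \tfrac{\mathbf{q}}{4\pi}|x|^{-3}x_i$ at spatial infinity via the constraint $\nabla^iE_i = \Im[\phi_0\overline{\dot{\phi}_0}]$, so $E$ itself cannot lie in $H^{k,s_0^\prime}$; this is precisely why only $E^{df}$ appears in the smallness hypothesis, and why $\rho$ in \eqref{F_peeling3} carries the extra piece $\mathbf{q}\,r^{-2}\chi_{1<t<r}$. The fix is to split off an explicit stationary Coulomb field $E^{stat}=\tfrac{\mathbf{q}}{4\pi}|x|^{-3}x\,\chi_{r>1}$ from $E^{cf}$: the remainder lies in the weighted energy space, and the excised static piece is propagated exactly, matching the subtraction $A^1_\mu = A_\mu - \delta_{\mu 0}\chi(r-t)\mathbf{q}/(4\pi r)$ used in Section \ref{section:charge}. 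A secondary subtlety is that $s>1/2$ is borderline for trace estimates along the outgoing cones $\{t-r=\mathrm{const}\}$; recovering the $\lr{(r-t)_+}^{-\gamma}$ factor in \eqref{F_peeling}--\eqref{phi_peeling} requires a weighted fractional integration along $L$-curves, which is exactly what pins down the admissible range $\tfrac{1}{2}<s$, $s+\gamma<\tfrac{3}{2}$ stated in the theorem.
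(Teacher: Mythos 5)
This theorem is quoted directly from Lindblad--Sterbenz \cite{LS06} and carries no proof in the present paper, so the comparison is necessarily against the argument of that reference. Your outline correctly identifies the high-level architecture of \cite{LS06}: a fully gauge-covariant bootstrap on $F$, $D\phi$, $\phi$; Morawetz-type multipliers contracted against the stress tensor $Q_{\alpha\beta}$; commutation with covariant Lie derivatives $\LieC{Z}$ with the $F\cdot D\phi$ corrections you mention; a weighted Klainerman--Sobolev step on spheres; and the subtraction of the static Coulomb piece $\mathbf{q}\,(4\pi)^{-1}|x|^{-3}x\,\chi_{r>1}$ from $E^{cf}$ to reconcile a nonzero charge with the weighted-Sobolev smallness assumption. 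Your reading of why only $E^{df}$ appears in the hypothesis and why $\rho$ carries the extra $\mathbf{q}\,r^{-2}\chi_{1<t<r}$ term is exactly right.

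There is, however, a genuine gap at the technical core. The classical conformal Killing field $K=\tfrac12\big((t^2+r^2)\partial_t+2tr\partial_r\big)$ that you place at the center of the bootstrap produces $L^2$ weights $\tm^2$, $\tp^2$, and after Klainerman--Sobolev this yields only the $s=1$ endpoint of \eqref{F_peeling}--\eqref{phi_peeling}. The fractional rates $\tp^{-s-3/2}$, $\tm^{-s-1/2}$ for $\tfrac12<s<1$, and in particular the exterior gain $\lr{(r-t)_+}^{-\gamma}$, are obtained in \cite{LS06} from a hierarchy of \emph{fractional} Morawetz multipliers of the schematic form $\tfrac12\big(\lr{t+r}^{1+2s}L+\lr{t-r}^{1+2s}\ull\big)$, together with an exterior variant weighted by $(r-t)_+^{2\gamma}$. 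These fields are not conformal Killing, so their deformation tensors contribute bulk terms whose sign and absorption must be checked term by term; this book-keeping is the bulk of the analysis, and it is what pins down the admissible range $\tfrac12<s\le 1$, $s+\gamma<\tfrac32$ rather than the borderline trace argument you invoke. Your final remark about ``weighted fractional integration along $L$-curves'' gestures in this direction but neither supplies the multiplier identity nor recognizes that it \emph{replaces}, rather than supplements, the $K$-energy around which you organized the proof; as written, your sketch would close only at $s=1$, $\gamma=0$.
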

 Here we used the notation $\alpha=\alpha(F)$, $\balpha=\balpha(F)$, $\rho=\rho(F)$ and
 $\sigma=\sigma(F)$ where
\begin{subequations}\label{null_decomp}
\begin{align}
        \alpha_A \ &= \ F_{LA} \ , &\balpha_A \ &= \ F_{\bL A}\ ,
        \label{null_decomp1}\\
    \rho \ &= \ \frac{1}{2}F_{\bL L} \ ,
    &\sigma \ &= \ \frac{1}{2} \in^{AB} F_{AB}
    \ . \label{null_decomp2}
\end{align}
\end{subequations}

The argument in \cite{LS06} in fact also decay bounds for derivatives of $F$ and $\phi$. More precisely, by \cite[Proposition 7.1, Theorem 8.1]{LS06}, we have the following decay (peeling) properties similar to
\eqref{F_peeling}--\eqref{phi_peeling} for the higher derivatives of
$(F,\phi)$ assuming that $2 < k$. In particular we have for $|I|\leq k-2$ and vector fields in the inhomogeneous Lorentz algebra $Z \in \{ \p_\mu, \Omega_{\mu \nu}, S\}$ (here $\Omega_{\mu \nu} = x_\mu \p_\nu - x_\nu \p_\mu$, $S= x^\mu \p_\mu$, and $x^0 = t$) the estimates
\begin{subequations}\label{phi_peelinghigher}
\begin{align}
         |\widetilde{D}_L D_Z^I\phi|  &\lesssim
    \mathcal{E}_{k,s,\gamma } \tp^{-s -
         \frac{3}{2}}\lr{(r-t)_+}^{-\gamma}  ,\label{phi_peeling1higher}\\
    |D_{\bL}D_Z^I\phi|  &\lesssim  \mathcal{E}_{k,s,\gamma }
    \tp^{-1}
        \tm^{-s-\frac{1}{2}} \lr{(r-t)_+}^{-\gamma} , \label{phi_peeling2higher}\\
    |\sD D_Z^I\phi|  &\lesssim  \mathcal{E}_{k,s,\gamma }
     \tp^{-1-s}\tm^{-\frac{1}{2}} \lr{(r-t)_+}^{-\gamma}  ,\label{phi_peeling3higher}\\
     |D_Z^I\phi| &\lesssim \mathcal{E}_{k,s,\gamma } \tp^{-1}\tm^{-s+\frac{1}{2}}
     \lr{(r-t)_+}^{-\gamma}
     , \label{phi_peeling4higher}
\end{align}
\end{subequations}
and
\begin{subequations}\label{F_peelinghigher}
\begin{align}
        |\alpha(\Lie{Z}^I F)| &\lesssim {\mathbf{q}}\,  r^{-3} \tm\,
         \chi_{ 1 < t < r}  +\mathcal{E}_{k,s,\gamma} \tp^{-s
        - \frac{3}{2}} \lr{(r-t)_+}^{-\gamma},\\
    \ \ \ \ \ \ \ \ \ \ \ \ \ \ \
    |\balpha(\Lie{Z}^I F)| &\lesssim  {\mathbf{q}}\,  r^{-2} \,
         \chi_{ 1 < t < r}  +\mathcal{E}_{k,s,\gamma } \tp^{-1}
        \tm^{-s-\frac{1}{2}} \lr{(r-t)_+}^{-\gamma} , \label{F_peeling1higher}\\
    |\rho(\Lie{Z}^I F)|  &\lesssim  {\mathbf{q}}\,  r^{-2} \,
         \chi_{ 1 < t < r}  +  \mathcal{E}_{k,s,\gamma }
         \tp^{-1-s}\tm^{-\frac{1}{2}}\lr{(r-t)_+}^{-\gamma}  , \label{F_peeling2higher}\\
     |\sigma(\Lie{Z}^I F)|  &\lesssim {\mathbf{q}}\,  r^{-2} \,
         \chi_{ 1 < t < r}  +\mathcal{E}_{k,s,\gamma }
     \tp^{-1-s}\tm^{-\frac{1}{2}}\lr{(r-t)_+}^{-\gamma}  , \label{F_peeling3higher}
\end{align}
\end{subequations}

For the sake of brevity, we use the consequent inequalities
\begin{subequations}\label{F_peelingreduced}
\begin{align}
        |\alpha(\Lie{Z}^I F)|  &\lesssim  \mathcal{E}_{k,s,\gamma} \tp^{-s
        - \frac{3}{2}}\tm^{s-\frac12},\\
    \ \ \ \ \ \ \ \ \ \ \ \ \ \ \
    |\balpha(\Lie{Z}^I F)|  &\lesssim  \mathcal{E}_{k,s,\gamma } \tp^{-1}
        \tm^{-1}  , \label{F_peeling1reduced}\\
    |\rho(\Lie{Z}^I F)| &\lesssim  \mathcal{E}_{k,s,\gamma }
         \tp^{-1-s}\tm^{s-1}  , \label{F_peeling2reduced}\\
     |\sigma(\Lie{Z}^I F)|  &\lesssim \mathcal{E}_{k,s,\gamma }
     \tp^{-1-s}\tm^{s-1}. \label{F_peeling3reduced}
\end{align}
\end{subequations}

We can now show estimates on derivatives of components of $J$ as follows:
\begin{Proposition}\label{prop:decay of current} With notation and  assumptions as in Theorem
\ref{thm:global stability} we have for $|I|\les k-2$
    \begin{equation}\label{eq:Jestiamte}
   | Z^I J_\mu| \lesssim \epsilon^2\lr{t+r}^{-2}\lr{t-r}^{-2s} \lr{(r-t)_+}^{-2\gamma},
\end{equation}
with the tangential components satisfying the following stronger bound in the region $r > \frac{t+1}{2}$:
\begin{equation}\label{eq:goodJestiamte}
\begin{split}
|Z^I J_L| + |(\Lie{Z}^I J)_L| &\lesssim \epsilon^2 \lr{t+r}^{-\frac{5}{2}-s}\lr{t-r}^{\frac{1}{2}-s} \lr{(r-t)_+}^{-2\gamma},\\
 |Z^I J_{S_B}| + |(\Lie{Z}^I J)_{S_B}|&\lesssim \epsilon^2 \lr{t+r}^{-3}\lr{t-r}^{1-2s} \lr{(r-t)_+}^{-2\gamma}
\end{split}
\end{equation}
for $Z \in \{ \p_\mu, \Omega_{\mu \nu}, S\}$ where $\Omega_{\mu \nu} = x_\mu \p_\nu - x_\nu \p_\mu$, $S= x^\mu \p_\mu$ (here $x^0 = t$).

\end{Proposition}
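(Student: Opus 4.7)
The plan is to bound $J_\mu=\Im(\phi\overline{D_\mu\phi})$ and its $Z^I$-derivatives pointwise, using the peeling estimates of Theorem~\ref{thm:global stability} and exploiting the imaginary-part structure to cancel the most singular contributions in the tangential components. The generic bound \eqref{eq:Jestiamte} is immediate: $|J_\mu|\le|\phi|\,|D_{\underline{L}}\phi|$, and multiplying \eqref{phi_peeling4} by \eqref{phi_peeling2} yields the required $\tp^{-2}\tm^{-2s}$ decay. For $|I|\ge 1$ we write $Z^I J_\mu=\Im\,Z^I(\phi\overline{D_\mu\phi})$, distribute by the Leibniz rule, and convert ordinary derivatives into covariant ones via $Z\psi=D_Z\psi-iA_Z\psi$; the $iA_Z$ contributions cancel pairwise under $\Im$ (gauge invariance), leaving a sum of terms $\Im(D_Z^{I_1}\phi\,\overline{D_Z^{I_2}D_\mu\phi})$ together with curvature contributions $F_{Z\mu}|D_Z^{I_3}\phi|^2$ arising from $[D_Z,D_\mu]=iF_{Z\mu}$. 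Each factor is bounded by \eqref{phi_peelinghigher} and \eqref{F_peelinghigher}, so the products give the claimed decay. The Lie-derivative version reduces to the same setting via $(\Lie{Z}J)_\mu=Z(J_\mu)+J_\nu\partial_\mu Z^\nu$, a bounded-coefficient linear combination of $Z(J_\mu)$ and the components $J_\nu$ for $Z\in\{\partial,\Omega,S\}$.

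For the improved $J_L$ bound in the region $r>(t+1)/2$, the definition of $\widetilde{D}_L\phi$ gives the algebraic identity $\widetilde{D}_L\phi=\tfrac{1}{r}D_L(r\phi)=D_L\phi+\phi/r$, so
\[
J_L=\Im(\phi\overline{D_L\phi})=\Im(\phi\overline{\widetilde{D}_L\phi})-\tfrac{1}{r}\Im(|\phi|^2)=\Im(\phi\overline{\widetilde{D}_L\phi})
\]
because $|\phi|^2$ is real. Multiplying the improved peeling \eqref{phi_peeling1higher} with \eqref{phi_peeling4higher} then gives the claimed bound. For the improved $J_{S_B}$ bound, the key is the pointwise representation $S_B=\Omega_B/r$, valid locally for a bounded-coefficient combination $\Omega_B$ of the spatial rotations $\Omega_{ij}$: $S_B$ is a unit tangent to the sphere of radius $r$ while the three rotations $\Omega_{ij}$ have Euclidean length $\le r$ on that sphere and (away from the axis) span its $2$-dimensional tangent space with generic magnitude $\sim r$. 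Consequently $D_{S_B}\phi=r^{-1}D_{\Omega_B}\phi$, and applying \eqref{phi_peeling4higher} with $Z=\Omega_{ij}$ together with $r\sim\tp$ in $r>(t+1)/2$ yields
\[
|D_{S_B}\phi|\lesssim r^{-1}|D_{\Omega_B}\phi|\lesssim\epsilon\,\tp^{-2}\,\tm^{1/2-s}\,\lr{(r-t)_+}^{-\gamma}.
\]
Combining with the peeling for $\phi$ gives $|J_{S_B}|\le|\phi|\,|D_{S_B}\phi|\lesssim\epsilon^2\,\tp^{-3}\,\tm^{1-2s}\,\lr{(r-t)_+}^{-2\gamma}$, exactly as claimed. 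The higher-order versions are handled by applying $Z^I$ to these identities, using that $Z$-derivatives of the coefficients of $r^{-1}\Omega_B$ produce bounded-coefficient combinations of $r^{-1}\Omega_{B'}$ plus lower-order rescalings, and then invoking the higher-order peeling as before.

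The main technical obstacle is obtaining the sharp $\tp^{-3}\tm^{1-2s}$ decay for $J_{S_B}$: a naive application of $|\sD\phi|\lesssim\epsilon\,\tp^{-1-s}\,\tm^{-1/2}\,\lr{(r-t)_+}^{-\gamma}$ from \eqref{phi_peeling3higher} only yields $|J_{S_B}|\lesssim\epsilon^2\,\tp^{-2-s}\,\tm^{-s}\,\lr{(r-t)_+}^{-2\gamma}$, which is weaker than the claim by a factor of $(\tp/\tm)^{1-s}$ when $\tm\ll\tp$. The rotation identity $S_B=\Omega_B/r$ is the decisive device: it replaces the peeling for the sharply-tangential angular derivative $\sD\phi$ by the peeling for the rotation-covariant derivative $D_{\Omega_B}\phi$, which inherits the sharp $\tp^{-1}\tm^{1/2-s}$ decay from the fact that the $\Omega_{ij}$ are Killing vector fields commuting with the covariant wave operator modulo lower-order curvature terms.
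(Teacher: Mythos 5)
Your proposal is essentially correct, and the two most important ideas exactly match what the paper does. For $J_L$ you observe that $\Im(\phi\,\overline{\phi/r})=0$, so $\Im(\phi\,\overline{D_L\phi})=\Im(\phi\,\overline{\widetilde{D}_L\phi})$; this is precisely the paper's symmetrization trick of replacing $D_\alpha(\,\cdot\,)$ with $r^{-1}D_\alpha(r\,\cdot\,)$ and throwing away the real correction $\phi\,\tfrac{\partial_\alpha r}{r}\,\overline{\phi}$ inside $\Im$. For $J_{S_B}$ you correctly identify that applying the sharp $|\sD\phi|\lesssim\tp^{-1-s}\tm^{-1/2}$ peeling gives only $\tp^{-2-s}\tm^{-s}$, which is strictly weaker than the stated $\tp^{-3}\tm^{1-2s}$ when $\tm\ll\tp$, and you supply the needed fix via the identity $S_B=r^{-1}c^B_{ij}(\omega)\Omega_{ij}$, hence $|D_{S_B}\phi|\lesssim r^{-1}\sum|D_{\Omega_{ij}}\phi|\lesssim\tp^{-2}\tm^{1/2-s}$ in $r>(t+1)/2$. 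The paper is terse on this point --- it only says the bounds ``follow from $L^\infty$ estimates in Theorem~\ref{thm:global stability}'' --- so your write-up is actually more explicit about what is being used for the angular component; the rotation-over-$r$ step is the standard mechanism that the $L^\infty$ peeling framework is meant to deliver, and you are right to flag it as the decisive device.

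Two points where your argument is lighter than the paper's. First, the paper organizes the higher-order commutations around Lie derivatives from the start, deriving the clean identity
\[
(\Lie{Z}^I J)_\alpha=\sum c^I_{JK}\,\Im\!\left(D_Z^J\phi\,\overline{D_\alpha D_Z^K\phi}\right)+\sum c^{Ia}_{JKL}\,\Re\!\left(D_Z^J\phi\,\overline{D_Z^K\phi}\,Z_a^\beta(\Lie{Z}^LF)_{\alpha\beta}\right),
\]
where the coefficient symmetry $c^I_{JK}=c^I_{KJ}$ is what makes the $\widetilde{D}_L$ replacement work at all orders; your Leibniz-plus-$Z\psi=D_Z\psi-iA_Z\psi$ route lands in the same place (the $A_Z$ terms cancel under $\Im$ since $A_Z$ is real, and iterating produces $iF_{Z\mu}|\phi|^2$-type curvature contributions), but you would still need to exhibit the symmetry of the resulting coefficients explicitly to justify the pairing that kills the bad $D_L$ term. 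Second, passing from the Lie-derivative bounds to $Z^I J_T$ in the tangential directions is not purely notational: when $Z^I$ falls on the frame coefficients, the outputs include $\underline{L}$-components with weights, and the paper isolates this in Lemma~\ref{lem:lie derivative vs derivative} (the $\tfrac{\tm}{\tp}$ and $\tfrac{\tm^2}{\tp^2}$ factors are exactly what absorb the weaker $(\Lie{Z}^K J)_{\underline{L}}\lesssim\tp^{-2}\tm^{-2s}$ estimate). Your phrase ``$Z$-derivatives of the coefficients produce bounded-coefficient combinations of $r^{-1}\Omega_{B'}$ plus lower-order rescalings'' gestures at the same bookkeeping but does not establish the gain of $\tm/\tp$ per lost tangential direction, which is where the actual content of that lemma lies. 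These are gaps of detail rather than gaps of idea; with those two pieces spelled out your proof would be complete.
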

\begin{proof}
We prove this in the region $r > t/2$, as in the far interior the proof is easier. We first show that components of Lie derivatives $\Lie{Z}^IJ$ satisfy the same bounds. We have the identity
\begin{align*}
[\Lie{Z}\left(\phi\overline{D\psi}\right)]_\alpha &= Z^\beta \p_\beta ( \phi \overline{D_\alpha \psi} ) + \p_\alpha Z^\beta \phi \overline{D_\beta \psi} \\
    &= D_Z\phi\overline{D_\alpha\psi} + \phi \overline{D_\alpha D_Z\psi} + iZ^\beta F_{\alpha\beta}\phi\overline{\psi},
\end{align*}
which follows from the relations
\[
\partial_\alpha(\phi\overline\psi) = D_\alpha\phi\overline\psi + \phi\overline{D_\alpha\psi}, \qquad D_\alpha D_\beta \psi = D_\beta D_\alpha \psi + i F_{\alpha \beta} \psi.
\]
We iterate this and take the imaginary part to get the identity
\begin{equation}\label{eq:LieJ}
(\Lie{Z}^IJ)_\alpha = \sum_{|J|+|K| = |I|} c^{I}_{JK}\Im\left( D_Z^J\phi \overline{D_\alpha D_Z^K\phi}\right) + \sum_{|J| + |K| + |L| + 1 \leq |I|}c^{Ia}_{JKL}\Re\left(D_Z^J\phi \overline{D_Z^K\phi}Z_a^\beta (\Lie{Z}^L F)_{\alpha\beta}\right).
\end{equation}

The second term on the right in \eqref{eq:LieJ} satisfies our bounds when contracted with any null vector, which follows almost immediately from
\begin{subequations}\label{F_comms}
\begin{align}
\sum_{|J| + |K| + 1 \leq |I|} |\Lie{Z}^J F(Z, L)||D_Z^K\phi|  &\lesssim \mathcal{E}_{k,s,\gamma}^2\tp^{-s-\frac32} \lr{(r-t)_+}^{-\gamma}, \\
\sum_{|J| + |K| + 1 \leq |I|} |\Lie{Z}^J F(Z, \underline{L})||D_Z^K\phi| &\lesssim \mathcal{E}_{k,s,\gamma}^2 \tp^{-1}\tm^{-s-\frac12} \lr{(r-t)_+}^{-\gamma} , \\
\sum_{|J| + |K| + 1 \leq |I|} |\Lie{Z}^J F(Z, S_A)||D_Z^K\phi| &\lesssim \mathcal{E}_{k,s,\gamma}^2 \tp^{-1-s}\tm^{-\frac12} \lr{(r-t)_+}^{-\gamma} .
\end{align}
\end{subequations}
The estimates \eqref{F_comms} in turn follow directly from decomposition of the field $Z$ which is contracted with $F$ into its null components, combined with the estimates \eqref{F_peelingreduced} and \eqref{phi_peeling2higher}.

We now consider the first term on the right in \eqref{eq:LieJ}. By our iteration, $c_I^{JK} = c_I^{KJ}$. We can therefore write
\begin{equation*}
\sum_{|J|+|K| = |I|} c^{I}_{JK}\Im\left( D_Z^J\phi \overline{D_\alpha D_Z^K\phi}\right) = \frac12\sum_{|J|+|K| = |I|} c^{I}_{JK}\Im\left( D_Z^J\phi \overline{D_\alpha D_Z^K\phi} + D_Z^K\phi \overline{D_\alpha D_Z^J\phi}\right)
\end{equation*}

We can replace the right hand side with
\[
\frac12\sum_{|J|+|K| = |I|} c_{I}^{JK}\Im\left( D_Z^J\phi \overline{\frac{D_\alpha (rD_Z^K\phi)}{r}} + D_Z^K\phi \overline{\frac{D_\alpha (rD_Z^J\phi)}{r}}\right),
\]
noting that the difference is
\[
\frac12\sum_{|J|+|K| = |I|} c_{I}^{JK}\Im\left( D_Z^J\phi \frac{\partial_\alpha(r)}{r}\overline{D_Z^K\phi} + D_Z^K\phi \frac{\partial_\alpha(r)}{r}\overline{D_Z^J\phi}\right),
\]
which is the imaginary part of a real quantity. Our bounds on components of $\Lie{Z}^I J$ then follow from  $L^\infty$ estimates in Theorem \ref{thm:global stability}. Consequently, we see that the required bounds \eref{eq:Jestiamte} and \eqref{eq:goodJestiamte} hold for the Lie derivatives $(\mc{L}_Z^I J)_T$. To prove the bound \eqref{eq:Jestiamte} for $Z^I J_\mu$, we first observe that we have the identity
\begin{equation}
\label{ZJ_decomp}
Z^IA_\mathcal{U} = \sum_{|K|+|L| = |I|}c^{I}_{KL}(\Lie{Z}^K A)_\alpha (\Lie{Z}^L \mathcal{U})^\alpha
\end{equation}
for any vector field $\mc{U}$, and any $1$-form $A_\mu$. Since, $\Lie{Z}^L (\partial_\mu) = c^{L}_{\mu\nu}\partial_\nu$, the estimate \eqref{eq:Jestiamte} follows immediately from the bounds on the Lie derivatives. For the derivatives of the tangential components, we apply the following lemma.
\end{proof}

\begin{lemma}\label{lem:lie derivative vs derivative}
Let $A_\mu$ be a $1$-form. Then for any multi-index $I$ we have
    \begin{align*}
|Z^I A_L| &\lesssim  \sum_{|K| \leq |I|}|(\Lie{Z}^K A)_L| + \frac{\lr{t-r}}{\lr{t+r}}|(\Lie{Z}^K A)_{S}| + \frac{\lr{t-r}^2}{\lr{t+r}^2}|(\Lie{Z}^K A)_{\underline{L}}|, \\
|Z^I A_S| &\lesssim  \sum_{|K| \leq |I|}|(\Lie{Z}^K A)_L| + |(\Lie{Z}^K A)_{S}| + \frac{\lr{t-r}}{\lr{t+r}}|(\Lie{Z}^K A)_{\underline{L}}|.
\end{align*}
\end{lemma}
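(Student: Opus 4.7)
I would start from the iterated Leibniz identity \eqref{ZJ_decomp} already established in the proof of Proposition~\ref{prop:decay of current}: for any vector field $\mathcal{U}$,
\begin{equation*}
Z^I A_\mathcal{U} \;=\; \sum_{|K|+|J|=|I|} c^I_{KJ}\, (\Lie{Z}^K A)_\alpha\, (\Lie{Z}^J \mathcal{U})^\alpha.
\end{equation*}
Applied with $\mathcal{U} = L$ or $\mathcal{U} = S_B$, this reduces the lemma to a purely algebraic question: control of the null-frame decomposition of $\Lie{Z}^J L$ and $\Lie{Z}^J S_B$ with weights matching the right-hand side of the claimed bounds.

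\textbf{Key intermediate claim.} I would prove, by induction on $|J|$, that
\begin{align*}
\Lie{Z}^J L &\;=\; \alpha_J L + \beta_J \underline{L} + \gamma_J^B S_B,\\
\Lie{Z}^J S_B &\;=\; \tilde\alpha_J L + \tilde\beta_J \underline{L} + \tilde\gamma_J^{B'} S_{B'},
\end{align*}
hold pointwise with $|\alpha_J|, |\tilde\alpha_J|, |\tilde\gamma_J^{B'}| \lesssim 1$, $|\gamma_J^B|, |\tilde\beta_J| \lesssim \lr{t-r}/\lr{t+r}$, and $|\beta_J| \lesssim \lr{t-r}^2/\lr{t+r}^2$. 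The base case $|J|=0$ is trivial. The inductive step needs two facts. The first is direct computation of the commutators $[Z, L], [Z, \underline{L}], [Z, S_B]$ for each $Z \in \{\partial_\mu, \Omega_{\mu\nu}, S\}$. Translations, spatial rotations, and scaling produce only diagonal or zero contributions, while the boost $\Omega_{0j}$ is the delicate case: direct calculation gives
\begin{equation*}
[\Omega_{0j}, L] \;=\; \omega^j L \,-\, \frac{t-r}{r}\big(\partial_j - \omega_j \partial_r\big),
\end{equation*}
whose remainder is sphere-tangent and hence has no $\underline{L}$-component. In summary, $\Lie{Z} L$ has \emph{vanishing} $\underline{L}$-component and an $S_B$-component of order $(t-r)/r$; $\Lie{Z} S_B$ has bounded $L, S_{B'}$ components and an $\underline{L}$-component of order $(t-r)/r$; and $\Lie{Z} \underline{L}$ has all three frame components bounded. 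The second fact is the standard stability of the weight $\lr{t-r}^k \lr{t+r}^{-l}$ under the action of the inhomogeneous Lorentz algebra, combined with the elementary pointwise bound $|t-r|/r \lesssim \lr{t-r}/\lr{t+r}$ (valid for $r \geq 1$). Expanding $\Lie{Z}(\alpha_J L + \beta_J \underline{L} + \gamma_J^B S_B)$ and collecting terms then propagates the weights, closing the induction.

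\textbf{Finishing the lemma.} Substituting the decomposition into the Leibniz expansion gives
\begin{equation*}
(\Lie{Z}^K A)_{\Lie{Z}^J L} \;=\; \alpha_J (\Lie{Z}^K A)_L \,+\, \beta_J (\Lie{Z}^K A)_{\underline{L}} \,+\, \gamma_J^B (\Lie{Z}^K A)_{S_B},
\end{equation*}
and summing, then applying the triangle inequality with the coefficient bounds, yields the claimed estimate for $|Z^I A_L|$. The bound for $|Z^I A_S|$ follows in exactly the same manner from the decomposition of $\Lie{Z}^J S_B$.

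\textbf{Main obstacle.} The essential point, and in my view the most delicate step, is the vanishing of the $\underline{L}$-component of $\Lie{Z} L$ for every generator $Z$ of the inhomogeneous Lorentz algebra. It is this cancellation --- rather than mere boundedness --- that produces the squared weight $\lr{t-r}^2/\lr{t+r}^2$ in front of $(\Lie{Z}^K A)_{\underline{L}}$ in the bound for $|Z^I A_L|$; without it, only a single factor of $\lr{t-r}/\lr{t+r}$ would emerge, which would be insufficient for the applications to the decay of $J_\mu$ in Proposition~\ref{prop:decay of current}. Propagating this structural cancellation through higher-order commutators, while simultaneously tracking the quadratic and linear weights on $\beta_J$ and $\gamma_J^B$ (and their analogues for $S_B$), is where the careful bookkeeping is required.
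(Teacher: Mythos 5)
Your proof follows essentially the same route as the paper: both begin from the Leibniz identity \eqref{ZJ_decomp} and reduce the lemma to a null-frame decomposition of $\Lie{Z}^J L$ and $\Lie{Z}^J S_B$, with prescribed weights, established by induction on $|J|$. The paper records the coefficients explicitly as sums of $f^I_{ijkn}(\omega)(t-r)^it^j(t+r)^k/r^n$ with index constraints ($j+k\leq n-1$ for the $S$-coefficient of $\Lie{Z}^I L$, $j+k\leq n-2$ for its $\underline L$-coefficient) and then reads off the weight hierarchy, whereas you track the weights abstractly. These are equivalent bookkeeping devices, and your identification of the first-order cancellation --- that $\Lie{Z}L$ has vanishing $\underline L$-component for every generator $Z$ --- as the structural source of the squared weight $\lr{t-r}^2/\lr{t+r}^2$ is a genuine clarification of \emph{why} the paper's constraint $j+k\leq n-2$ on $f_3$ holds, which the paper does not explain.

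Two small corrections are in order. First, the bound $|t-r|/r\lesssim\lr{t-r}/\lr{t+r}$ is \emph{not} valid merely for $r\geq1$; take $r=1$, $t$ large to see it fail. It requires $r\gtrsim t+r$, i.e. a region like $r>t/2$. This is harmless in context --- the proof of Proposition~\ref{prop:decay of current} explicitly restricts to $r>t/2$, and the paper's polynomial form carries the same restriction implicitly through the $r^{-n}$ factors --- but the domain should be stated correctly. Second, translations do \emph{not} produce only diagonal or zero contributions: $[\partial_j,L]=r^{-1}(\partial_j-\omega_j\partial_r)$ is a nonzero sphere-tangent term. Its coefficient $r^{-1}\lesssim\lr{t+r}^{-1}\leq\lr{t-r}/\lr{t+r}$ is acceptable, so the conclusion is unaffected, but the blanket claim as written is wrong. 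Finally, your inductive step silently relies on the fact that applying $Z$ to the coefficient functions (which involve $\omega$-dependence and ratios like $(t-r)/r$) stays within the asserted weight class; this works precisely because $Z(t\pm r)$, $Z(r)$, $Z(\omega_k)$ all scale acceptably once $r\gtrsim t+r$, and is exactly what the paper's more explicit polynomial form was designed to make transparent. Spelling that out would make the "tedious bookkeeping" you allude to actually verifiable.
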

\begin{proof}
We start by observing that
\begin{subequations}
\begin{align*}
(\Lie{Z}^I L)^\alpha &= f_1L^\alpha + \sum_i f_2^i S_i^\alpha + f_3 \underline{L}^\alpha \\
(\Lie{Z}^I S_j)^\alpha &= g_{j1}L^\alpha + \sum_i g_{j2}^i S_i^\alpha + g_{j3} \underline{L}^\alpha
\end{align*}
\end{subequations}
where $f_1, g_{j1}, g_{j2}^i$ can all be written in the form
\[
\sum_{\substack{i+j+k \leq n \leq |I|}}f_{ijkn}^I(\omega) \frac{(t-r)^it^j(t+r)^k}{r^n},
\]
$f_2, g_{j3}$ can be written in the form
\[
\sum_{\substack{i+j+k \leq n\leq |I| \\ j+k \leq n-1}}f_{ijkn}^I(\omega) \frac{(t-r)^it^j(t+r)^k}{r^n},
\]
and $f_3$ can be written in the form
\[
\sum_{\substack{i+j+k \leq n \leq |I|\\ j+k \leq n-2}}f_{ijkn}^I(\omega) \frac{(t-r)^it^j(t+r)^k}{r^n}.
\]
This follows a relatively straightforward but tedious inductive proof. It follows that
\begin{subequations}
\begin{align*}
|f_1|+|g_{j1}|+|g_{j2}^i| &\lesssim_I 1 \\
|f_2|+|g_{j3}|&\lesssim_I \frac{\lr{t-r}}{\lr{t+r}} \\
|f_3| &\lesssim_I \frac{\lr{t-r}^2}{\lr{t+r}^2}.
\end{align*}
\end{subequations}
We can combine these estimates with the identity \eqref{ZJ_decomp} to get
\begin{subequations}
\begin{align*}
|Z^I A_L| &\lesssim  \sum_{|K| \leq |I|}|(\Lie{Z}^K A)_L| + \frac{\lr{t-r}}{\lr{t+r}}|(\Lie{Z}^K A)_{S}| + \frac{\lr{t-r}^2}{\lr{t+r}^2}|(\Lie{Z}^K A)_{\underline{L}}|, \\
|Z^I A_S| &\lesssim  \sum_{|K| \leq |I|}|(\Lie{Z}^K A)_L| + |(\Lie{Z}^K A)_{S}| + \frac{\lr{t-r}}{\lr{t+r}}|(\Lie{Z}^K A)_{\underline{L}}|
\end{align*}
\end{subequations}
as required.
\end{proof}

\section{Compatible data for $A$ and the charge contribution}\label{sec:compatible data}

In this section our goal is to show that we can construct an admissible data set for $F$ (in the sense of Theorem \ref{thm:global stability}) from the data $(a_j, \dot{a}_j)$ given in the statement of Theorem \ref{thm:asymp in exterior}, which satisfies the required smallness condition. We start by defining the data for $A_0$ at $t=0$, $(a_0, \dot{a}_0)$, in terms of the data $(a_j, \dot{a}_j)$ and $(\phi_0, \dot{\phi}_0)$
    \begin{equation}\label{eq:laplacea0}
        \Delta a_0 = \p^j \dot{a}_j - \Im(\phi_0 \overline{\dot{\phi_0}}), \qquad \qquad \dot{a}_0 = \p^j a_j.
    \end{equation}
The first equation in \eref{eq:laplacea0} arises from the equation $\p^\beta F_{0\beta} = J_0=\Im [\phi\, \overline{D_0\phi}] $ evaluated at $t=0$, while the second is the Lorenz gauge condition at $t=0$. The fact that $a_0$ is a solution to an elliptic equation is responsible for the lack of decay of $a_0$ in $r$, and leads to significant difficulties in the global analysis for the MKG system. To understand how the charge may effect $a_0$, note that if
$$
{\mathbf{q}} = \int_{\RR^3} J_0 dx\neq 0,
$$
 then we can only expect the solution of \eqref{eq:laplacea0} to decay $a_0 \approx \frac{1}{r}$ as $r \rightarrow \infty$. This causes problems as to bootstrap decay for the gauge fields $A_\mu$, we require additional decay in the exterior region $r > t$. The way to proceed, following \cite{LS06}, is to subtract off the worst decay component of the data. More precisely, we define the modified gauge field $A^1_\mu$ as
    $$ A^1_\mu = A_\mu - \delta_{\mu 0} \chi( r-t) \frac{{\mathbf{q}}  }{4 \pi r}.$$
where $\chi$ is a smooth cutoff such that $\chi(s) = 1$ if $s\g 1$, and $\chi(s) = 0$ if $s<\frac{1}{2}$. Then
    $$ \Box A^1_\mu = -J_\mu$$
thus the modified fields satisfy the same equation as $A_\mu$,
and a computation shows that we have the improved decay
    $$ A^1_0(0, x) \approx \frac{1}{r^2}$$
as $r\rightarrow \infty$ which is sufficient to close a bootstrap argument.

Given the full data set $(a_\mu, \dot{a}_\mu)$ for $A_\mu$, we construct an admissible data set $(E, H)$ for $F$ by defining
    \begin{equation}\label{eq:atoE}
        E = \dot{\underline{a}} - \nabla a_0, \qquad H = \nabla \times \underline{a}
    \end{equation}
where $\underline{a} = (a_1, a_2, a_3)$ and $\underline{\dot{a}} = (\dot{a}_1, \dot{a}_2, \dot{a}_3)$ denotes the spatial components of the data for the gauge $A$. Clearly since $H$ is a curl, it is divergence free. Moreover, in view of \eref{eq:laplacea0}, we see that $\p^j E_j = \Im[\phi_0 \overline{\dot{\phi}_0}]$. Hence the data $(E, H)$ satisfies the compatibility conditions \eref{compat_conds}. The smallness condition is a consequence of the following.

\begin{lemma}\label{lem:smallness of energy}
Let $k\g 1$ with $k\in\mathbb{N}$, and suppose that for some $\frac{1}{2}<s_0 < \frac{3}{2}$ we have
    \begin{equation}\label{eq:aphidata}
     \| \underline{a} \|_{H^{k+1, {s_0}-1}} + \| \underline{\dot{a}} \|_{H^{k, {s_0}}}
     + \| \phi_0 \|_{H^{k+1, {s_0} -1}} + \| \dot{\phi}_0 \|_{H^{k, {s_0}}} \les \epsilon.
     \end{equation}
    Define $(a_0, \dot{a}_0)$ and $(E, H)$ as in \eref{eq:laplacea0} and \eref{eq:atoE}.
Then for any ${s_0^\prime} <s_0$ we have
    \begin{equation}\label{eq:EHphidata} \| E^{df} \|_{H^{k, {s_0^\prime}}}
    + \| H \|_{H^{k, {s_0^\prime}}} + \big\| a_0 - \tfrac{1}{4\pi r} \chi(r) \mathbf{q} \big\|_{H^{k+1, {s_0^\prime} -1}} + \| \dot{a}_0 \|_{H^{k, {s_0^\prime}}} + \| \phi_0\|_{H^{k+1, {s_0^\prime}-1}_{cov} } + \| \dot{\phi}_0 \|_{H^{k, {s_0^\prime}}_{cov}}
        \lesa \epsilon,
    \end{equation}
    where $\chi(r)$ is a smooth function such that $\chi(r)=1$ for $r>1$, and $\chi = 0$ for $r<\frac{1}{2}$.
\end{lemma}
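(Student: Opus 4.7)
I would treat the six quantities in \eqref{eq:EHphidata} in turn, with the bulk of the effort devoted to the corrected potential $a_0 - \chi\mathbf{q}/(4\pi r)$. The bounds for $H$ and $\dot a_0$ are immediate from the definitions, since each is a single spatial derivative of a component of $\underline a$ and the weight index of $H^{k,\cdot}$ shifts by $1$ per derivative:
\[
\|H\|_{H^{k, s_0'}} + \|\dot a_0\|_{H^{k, s_0'}} \lesssim \|\underline a\|_{H^{k+1, s_0' - 1}} \leq \|\underline a\|_{H^{k+1, s_0 - 1}} \lesssim \epsilon.
\]
Since $\nabla a_0$ is curl-free, the divergence-free projection annihilates it, so $E^{df} = P^{df}\dot{\underline a}$, where $P^{df} = \mathrm{Id} + \nabla\Delta^{-1}\mathrm{div}$ is built from Riesz transforms. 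These are bounded on $L^2(\lr{r}^{2s_0'}\,dx)$ provided the power weight lies in the Muckenhoupt class $A_2$, i.e. $|s_0'| < 3/2$, which is exactly where the loss $s_0' < s_0$ is needed (with $k \geq 1$ higher derivatives handled by commuting). For the covariant norms of $\phi_0, \dot\phi_0$, I expand $\underline D = \nabla + i\underline a$, which gives $\underline D^I$ as $\nabla^I$ plus a sum of products $(\nabla^{J_1}\underline a)\cdots(\nabla^{J_m}\underline a)(\nabla^{J_0}\phi_0)$ with $m + \sum|J_i| = |I|$; weighted Sobolev product estimates, Sobolev embedding, and smallness of $\underline a$ bound each such product by $O(\epsilon^{m+1})$.

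The heart of the proof is the bound on $\tilde a_0 := a_0 - \chi\mathbf{q}/(4\pi r)$. From \eqref{eq:laplacea0},
\[
\Delta \tilde a_0 = f - \Delta\!\bigl(\chi \mathbf{q}/(4\pi r)\bigr), \qquad f = \partial^j \dot a_j - \Im(\phi_0 \overline{\dot\phi_0}).
\]
The first step is to verify that this source is mean-zero on $\RR^3$: the divergence term integrates to zero, the definition of the charge gives $\int \Im(\phi_0\overline{\dot\phi_0})\,dx = \mathbf{q}$, and the divergence theorem applied to $u = \chi\mathbf{q}/(4\pi r)$ on large balls $B_R$ (on which $\chi \equiv 1$) yields $\int \Delta u\,dx = \lim_{R\to\infty}\int_{\partial B_R}\partial_r(\mathbf{q}/4\pi r)\,dS = -\mathbf{q}$, so the two contributions cancel. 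The next step is to bound the source in $H^{k-1, s_0'+1}$: $\partial^j\dot a_j$ is controlled directly, the cut-off piece $\Delta(\chi\mathbf{q}/(4\pi r))$ is smooth and compactly supported in the corona $\tfrac12 < r < 1$, and the quadratic term $\Im(\phi_0\overline{\dot\phi_0})$ is estimated by the weighted-Sobolev product inequality whose losses are precisely absorbed by $s_0' < s_0$. Finally, I would invoke the standard McOwen--Lockhart--Bartnik isomorphism theorem: for $\tfrac12 < s_0' < \tfrac32$ the Laplacian is an isomorphism from $H^{k+1, s_0'-1}$ onto the hyperplane $\{g \in H^{k-1, s_0'+1} : \int g = 0\}$ in $\RR^3$. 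The mean-zero side condition just verified gives $\|\tilde a_0\|_{H^{k+1, s_0'-1}} \lesssim \epsilon$.

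\textbf{Main obstacle.} The delicate point is this final step: the range $(1/2,3/2)$ for the weight index is precisely where the Laplacian on $\RR^3$ fails to be surjective without the mean-zero condition, because the Newton-potential tail $1/r$ obstructs membership in $H^{k+1,s_0'-1}$. The entire role of the subtraction $\chi\mathbf{q}/(4\pi r)$ is to remove this exactly one-dimensional obstruction; both the coefficient $\mathbf{q}/(4\pi)$ (dictated by the fundamental solution $-1/(4\pi r)$ of $\Delta$) and the integral identity $\int J_0 = \mathbf{q}$ must conspire for the cancellation. Everything else in the lemma---products, Riesz bounds, commutator arguments for passing between $\nabla$ and $\underline D$---is routine weighted Sobolev analysis once the elliptic inversion in the critical weight range is in place.
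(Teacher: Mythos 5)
Your overall strategy is sound and, most importantly, you correctly identify the heart of the lemma: the Newtonian-tail obstruction is exactly one-dimensional, the coefficient $\mathbf{q}/(4\pi)$ is forced by the fundamental solution, and the cancellation $\int\bigl(\p^j\dot a_j - \Im(\phi_0\overline{\dot\phi_0}) - \Delta(\chi\mathbf{q}/4\pi r)\bigr)\,dx = 0$ is the mechanism that makes $\tilde a_0$ land in the right weighted space. Your mean-zero computation checks out. However, your route is genuinely different from the paper's, and in one spot it has a gap.

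\textbf{Where you diverge.} The paper does not invoke the McOwen--Lockhart--Bartnik Fredholm theory. Instead it runs an elementary two-step integration-by-parts chain: a weighted coercivity estimate bounding $\nabla^2\psi$ at weight $\alpha$ by $\nabla\psi$ at weight $\alpha-1$ plus $\Delta\psi$ at weight $\alpha$, and a companion estimate for $\nabla\psi$. Iterating $k$ times reduces the full $H^{k+1,s_0'-1}$ norm of $\tilde a_0$ (and likewise the $H^{k,s_0'}$ norm of $E^{df}$) to a single low-level weighted $L^2$ quantity, which is then closed via Lemma~10.1 of Lindblad--Sterbenz and a Hölder step $\|r^{s_0'-1}\Delta\psi\|_{L^{6/5}} \lesssim \|\lr{r}^{s_0}\Delta\psi\|_{L^2}$ using $s_0' < s_0$. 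Your appeal to the weighted elliptic isomorphism theorem is cleaner as an organizing principle (and essentially correct in spirit), but it cites machinery the paper avoids, at the expense of not explaining how the weight loss $s_0' < s_0$ actually enters.

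\textbf{The gap.} For $E^{df}$ you write $E^{df} = P^{df}\dot{\underline a}$ with $P^{df}$ a Riesz-transform projector, bounded on $L^2(\lr{r}^{2s_0'}\,dx)$ because $|s_0'| < 3/2$ keeps the weight in $A_2$, with "higher derivatives handled by commuting." The commuting step does not close. Since $\nabla^I P^{df} = P^{df}\nabla^I$, controlling $\|\nabla^I E^{df}\|_{L^2(\lr{r}^{2(s_0'+|I|)})}$ requires $P^{df}$ bounded on $L^2$ with weight $\lr{r}^{2(s_0'+|I|)}$; but for $|I|\geq 1$ and $s_0' > 1/2$ this exponent exceeds the $A_2$ range ($2(s_0'+|I|) > 3$), so the Calderón--Zygmund weighted bound you invoke simply fails at those levels. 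This is precisely why the paper iterates the integration-by-parts coercivity inequalities: they trade a derivative for a unit of weight and eventually land in the $|I|=0$ range where the base estimate (or, in your language, the $A_2$ bound) applies. You would need to supply that iteration (or an equivalent weighted elliptic estimate with lower-order error) to make the $k\geq 1$ case rigorous; the bare $A_2$ argument plus "commuting" is not enough.

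Everything else --- $H$ and $\dot a_0$ by inspection, the covariant norms by expanding $\underline D = \nabla + i\underline a$ with product estimates and induction --- matches the paper's treatment.
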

\begin{proof}
The bounds for $H$ and $\dot{a}_0$ follow directly from \eref{eq:laplacea0} and \eref{eq:atoE}. To bound $E^{df}$ and $a_0$, we first observe that integrating by parts easily gives for $\alpha>\frac{1}{2}$
    $$ \int_{\RR^3} (1+r^2)^\alpha |\nabla \psi |^2 dx \lesa \int_{\RR^3} ( 1 + r^2)^{\alpha - 1} |\psi|^2 dx + \int_{\RR^3} ( 1+ r^2)^{\alpha+1}  |\Delta \psi|^2 dx $$
and
    $$ \sum_{i, j} \int_{\RR^3} (1+r)^\alpha |\p_j \p_i  \psi |^2 dx \lesa \int_{\RR^3} ( 1 + r)^{\alpha - 1} |\nabla \psi|^2 dx + \int_{\RR^3} ( 1+ r)^{\alpha}  |\Delta \psi|^2 dx. $$
In particular, after $k$ applications, we see
    $$ \big\| a_0 - \frac{1}{4\pi r} \chi(r) \mathbf{q} \big\|_{H^{k+1, {s_0^\prime} -1}}^2 \lesa \int_{\RR^3} (1 + r^2)^{{s_0^\prime}} \big| \nabla \big(a_0 - \frac{1}{4\pi r} \chi(r) \mathbf{q}\big) \big|^2 dx + \| \dot{a}_j  - \Im(\phi_0 \overline{\dot{\phi}_0}) \|_{H^{k, {s_0^\prime}}}$$
and after writing $E^{df} = \p_j \dot{a}_j - \nabla \Delta^{-1} \p^j \dot{a}_j$ (since gradient $\nabla a_0$ is curl free, and, as we will see, is decaying faster than $r^{-1}$)
    $$ \| E^{df} \|_{H^{s, s + \gamma}}^2 \lesa \int_{\RR^3} ( 1 + r^2)^{{s_0^\prime}} | \Delta^{-1} \p^j \dot{a}_j |^2 dx + \| \dot{a}_j \|_{H^{k, {s_0^\prime}}}.$$
Therefore, using the decay of $(\phi_0, \dot{\phi}_0)$, (and hence $\Im(\phi_0 \overline{\dot{\phi}_0})$) it is enough to show that
    $$ \int_{\RR^3} ( 1+ r^2)^{{s_0^\prime} -1} \big| \nabla \big(\psi + \frac{1}{4\pi r} \widetilde{\mathbf{q}} \big)\big|^2 dx \lesa \int_{\RR^3} ( 1 + r^2)^{{s_0^\prime}} | \Delta \psi|^2 dx $$
with $\widetilde{\mathbf{q}} = \int_{\RR^3} \Delta \psi dx $. But this follows from the appendix in \cite{LS06}. More precisely, from  \cite[Lemma 10.1]{LS06} we have
\begin{equation*}
        \int_{\RR^3} \ r^{2({s_0^\prime}-1)}\, \Big| \nabla
        \left(\psi + \frac{\widetilde{\mathbf{q}}}{4\pi
        r}\right)\Big|^2 \ dx \ \lesssim \ \lp{r^{{s_0^\prime}-1}\,
         \Delta \psi }{L^\frac{6}{5}}^2 , \label{weighted_grad_sob}
\end{equation*}
and we conclude by observing that
\begin{equation*}
\lp{r^{{s_0^\prime}-1} \,
         \triangle \psi }{L^\frac{6}{5}}\leq \lp{\langle r\rangle^{{s_0}}\,
         \triangle \psi }{L^2} \lp{\langle r\rangle^{{s_0^\prime} -{s_0} -1}\,}{L^{3}}\leq C_\mu \lp{\langle r\rangle^{{s_0^\prime}}\,
         \triangle \psi }{L^2} ,\qquad\text{if}\quad s_0>{s_0^\prime}.
\end{equation*}
Finally, to conclude that we may bound $\| \phi_0\|_{H^{k+1, {s_0^\prime}-1}_{cov}} + \| \dot{\phi}_0 \|_{H^{k, {s_0^\prime}}_{cov}}$ by the weighted Sobolev space without the covariant derivatives, simply follows from the weighted Sobolev lemma below, together with an induction argument.
\end{proof}

To deduce the pointwise decay of $A^1$, we use the following.

\begin{Proposition} With notation and assumptions as in Lemma \ref{lem:smallness of energy}
we have for $|I|\leq k-2$
\begin{equation}\label{eq:A1initial}
|Z^I A_\delta^1| + |Z^I \phi| \lesssim \langle r\rangle^{-{s_0^\prime}-1/2},\qquad\text{when}\quad  t=0.
    \end{equation}
\end{Proposition}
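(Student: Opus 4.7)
The plan is to trade time derivatives at $t=0$ for spatial ones using the equations, and then invoke a weighted Sobolev embedding to convert the data norms of Lemma~\ref{lem:smallness of energy} into pointwise bounds. Precisely, a standard dyadic-shell argument in $\RR^3$ gives, for $k\geq |I|+2$ and $\alpha > -3/2$, the embedding $|\nabla_x^I f(x)| \lesa \lr{r}^{-\alpha-\frac{3}{2}-|I|}\,\|f\|_{H^{k,\alpha}}$. Applied with $\alpha = s_0'-1$ to $\phi_0$, $\underline{a}$ and $a_0^1 := a_0-\frac{\chi(r)\mathbf{q}}{4\pi r}$, and with $\alpha = s_0'$ to $\dot\phi_0$, $\dot{\underline{a}}$, $\dot a_0$, Lemma~\ref{lem:smallness of energy} then yields
\begin{align*}
|\nabla_x^I\phi_0|+|\nabla_x^I\underline{a}|+|\nabla_x^I a_0^1| &\lesa \lr{r}^{-s_0'-\frac{1}{2}-|I|},\\
|\nabla_x^I\dot\phi_0|+|\nabla_x^I\dot{\underline{a}}|+|\nabla_x^I\dot a_0| &\lesa \lr{r}^{-s_0'-\frac{3}{2}-|I|}
\end{align*}
for $|I|\leq k-2$.

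Next, at $t=0$ the fields $\p_j,\Omega_{ij},S|_{t=0}=x^j\p_j$ act purely spatially, while $\Omega_{0j}|_{t=0}u = x_j\p_t u|_{t=0}$. A short induction on $|I|$ shows that $Z^I u(0,\cdot)$ is a finite sum of terms $x^\gamma \p_t^a \nabla_x^b u|_{t=0}$ with $|\gamma|\leq a+|b|\leq |I|$. I would eliminate the time derivatives using $\p_t\phi(0)=\dot\phi_0 - i a_0\phi_0$, $\p_t A^1_\mu(0)=\dot a_\mu - \delta_{\mu 0}\chi'(r)\frac{\mathbf{q}}{4\pi r}$, together with the wave equations $\Box A^1_\mu = -J_\mu$ and $\Box\phi = -2iA^\alpha\p_\alpha\phi + A^\alpha A_\alpha\phi$, which let us trade $\p_t^2$ for $\Delta$ plus nonlinear products. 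Iterating reduces every $\p_t^a\nabla_x^b (A^1_\mu,\phi)|_{t=0}$ to a polynomial expression in $(\phi_0,\dot\phi_0,\underline{a},\dot{\underline{a}},a_0,\dot a_0)$ and their spatial derivatives. Each resulting contribution has the form $x^\gamma\prod_\ell \nabla_x^{I_\ell}u_\ell$ with $|\gamma|\leq\sum_\ell|I_\ell|$, and by the previous step is bounded by $\lr{r}^{|\gamma|-s_0'-\frac{1}{2}-\sum_\ell|I_\ell|} \leq \lr{r}^{-s_0'-\frac{1}{2}}$; additional nonlinear factors only improve the rate since $s_0'+\frac{1}{2}>1$, which gives \eqref{eq:A1initial}.

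The main delicate point is the long-range $\mathbf{q}/(4\pi r)$ behaviour of $a_0$, which has only $\lr{r}^{-1}$ decay and hence does not belong to $H^{k+1, s_0'-1}$; it is precisely to accommodate this that we pass to $A^1$ and invoke the $a_0^1$-estimate of Lemma~\ref{lem:smallness of energy}. The smooth cutoff $\chi(r)\mathbf{q}/(4\pi r)$ and its derivatives, which reappear through the identity $\p_t A^1_0(0) = \dot a_0 - \chi'(r)\mathbf{q}/(4\pi r)$ and similar expressions, are pointwise bounded on $r\gtrsim 1$ and are multiplied by factors already carrying the full decay, so they cause no loss. Apart from this the argument is essentially bookkeeping for the iterated substitution of the equations.
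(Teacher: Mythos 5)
Your argument is correct and takes essentially the same route as the paper's (terse) proof: convert the weighted $L^2$ bounds of Lemma~\ref{lem:smallness of energy} into pointwise decay via a weighted Sobolev embedding, and then express $Z^I$ applied at $t=0$ in terms of spatial derivatives of the data by trading time derivatives through the constraints and the wave equations. The only cosmetic difference is that you derive the embedding by a dyadic-shell/rescaling argument with two extra derivatives, while the paper's weighted Sobolev lemma is stated with three extra derivatives via a radial integration plus angular $H^2(\mathbb{S}^2)$; either version fits the available derivative budget $|I|\leq k-2$ with data in $H^{k+1,s_0'-1}$ and $H^{k,s_0'}$, and you correctly isolate the long-range $\chi\mathbf{q}/(4\pi r)$ part of $a_0$ and observe that it only enters the iteration either through the compactly supported $\chi'$ or multiplied by factors with the full decay.
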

The proof follows from the $L^2$ estimates in Lemma \ref{lem:smallness of energy} together with the weighted Sobolev lemma below. (Together with that we get the second and higher time derivatives from the equation.)

\begin{lemma} Let $\omega=x/r\in \mathbb{S}^2$.  Then for $N'\geq N+3$
\begin{equation*}\label{eq:decayassumptioninfty}
\sum_{|\alpha|+k\leq N} \sup_{r\geq 0}
\sup_{\omega\in\mathbb{S}^2} \big|(\langle\, r\rangle\pa_r)^k\pa_\omega^\alpha
F(r,\omega)\big|\langle r\rangle^{{s_0^\prime}+1/2}\,
 \lesssim\!\!\sum_{|\alpha|+k\leq N'} \!\!\!\!
 \Big(\int_{0}^\infty \!\!\!\!\int_{\mathbb{S}^2} \!\!\!\big|
 (\langle\, r\rangle\pa_r)^k\pa_\omega^\alpha F(r,\omega)\big|^2\langle r\rangle^{2({s_0^\prime} -1)} d S(\omega) r^2\! dr\Big)^{\!1\!/2}\!\!\!.
\end{equation*}
\end{lemma}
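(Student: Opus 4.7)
The plan is to reduce the weighted pointwise estimate to a standard Sobolev embedding on a fixed compact domain via a dyadic decomposition in $r$, exploiting the fact that the vector fields $\langle r\rangle\partial_r$ and $\partial_\omega$ are scale invariant for $r\gtrsim 1$.

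First, partition $[0,\infty)$ into the bounded piece $\{0\leq r\leq 2\}$ and dyadic annuli $\Omega_j=\{2^{j-1}\leq r\leq 2^{j+1}\}$ for $j\geq 1$, on which $\langle r\rangle\sim r\sim 2^j$. On the bounded piece all weights are comparable to constants and $r^2 dr\,dS(\omega)$ is (up to a smooth factor) the Euclidean volume element on $B_2\subset\mathbb{R}^3$, so the claim follows at once from the standard embedding $H^{N+2}(B_2)\hookrightarrow C^N(B_2)$, which is available since $N'\geq N+3>N+3/2$.

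Second, on $\Omega_j$ with $j\geq 1$ introduce the rescaled variable $\rho=r/2^j\in[1/2,2]$, so that the scale-invariant operator $\langle r\rangle\partial_r$ is uniformly comparable to $\partial_\rho$ and $\partial_\omega$ is unchanged. Applying Sobolev embedding on the fixed cylinder $[1/2,2]\times\mathbb{S}^2$ to $\widetilde F(\rho,\omega):=F(2^j\rho,\omega)$ gives
\begin{equation*}
\sum_{|\alpha|+k\leq N}\sup_{(\rho,\omega)}\big|\partial_\rho^k\partial_\omega^\alpha\widetilde F\big|^2
\;\lesssim\;\sum_{|\alpha|+k\leq N+3}\int_{1/2}^{2}\!\!\int_{\mathbb{S}^2}\big|\partial_\rho^k\partial_\omega^\alpha\widetilde F\big|^2\,dS(\omega)\,d\rho.
\end{equation*}
Undoing the rescaling (so that $dr=2^j\,d\rho$, $r^2\,dr\sim 2^{3j}\,d\rho$, $\langle r\rangle\sim 2^j$) and multiplying by $\langle r\rangle^{2s_0'+1}\sim 2^{(2s_0'+1)j}$ on the left yields
\begin{equation*}
\sum_{|\alpha|+k\leq N}\sup_{\Omega_j}\big|(\langle r\rangle\partial_r)^k\partial_\omega^\alpha F\big|^2\langle r\rangle^{2s_0'+1}
\;\lesssim\;\sum_{|\alpha|+k\leq N+3}\int_{\Omega_j}\big|(\langle r\rangle\partial_r)^k\partial_\omega^\alpha F\big|^2\langle r\rangle^{2(s_0'-1)}r^2\,dr\,dS(\omega),
\end{equation*}
the powers of $2^j$ matching exactly because $(2s_0'+1)-3=2(s_0'-1)+2$.

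Third, conclude by taking the supremum over $j\geq 1$ on the left and bounding it by the sum over $j$ of the right-hand sides; the bounded overlap of the annuli $\Omega_j$ then gives control by the single integral over $[0,\infty)\times\mathbb{S}^2$. Combining with the bounded-$r$ region handled in the first step and taking a square root yields the stated inequality. The main technical point is simply to verify that the three weight scalings — the pointwise weight $\langle r\rangle^{s_0'+1/2}$, the $L^2$ weight $\langle r\rangle^{2(s_0'-1)}$, and the measure factor $r^2\,dr$ — are dimensionally balanced under the rescaling $r\mapsto 2^j\rho$; this is precisely why the vector fields are taken to be the scale-invariant pair $\langle r\rangle\partial_r,\partial_\omega$ and why the weight exponent $s_0'-1$ (rather than some other power) appears on the right.
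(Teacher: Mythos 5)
Your dyadic-annulus argument is a genuinely different route from the paper's, which is a one-line weighted Cauchy--Schwarz: writing $F(r,\omega)=-\int_r^\infty\partial_\rho F\,d\rho$ and pairing $\langle\rho\rangle^{s_0'-1}\rho(\langle\rho\rangle\partial_\rho F)$ against the dual weight $\langle\rho\rangle^{-(s_0'-1)}\rho^{-1}\langle\rho\rangle^{-1}$ produces the factor $\langle r\rangle^{-(s_0'+1/2)}$ directly, after which the $\sup$ over $\omega$ is removed by $H^2(\mathbb{S}^2)$; this uses exactly one radial and two angular derivatives, accounting for $N'\geq N+3$. Your treatment of the region $r\gtrsim 1$ by rescaling to a fixed cylinder and applying ordinary $3$-D Sobolev is correct, and the weight bookkeeping is sound, though the displayed check ``$(2s_0'+1)-3=2(s_0'-1)+2$'' is not an identity; what should be verified is $2s_0'+1=2(s_0'-1)+3$ (i.e.\ $2(s_0'-1)$ from $\langle r\rangle^{2(s_0'-1)}$ plus $2$ from $r^2$ plus $1$ from $dr$).

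The treatment of the bounded piece $\{r\leq 2\}$, however, has a genuine gap. You observe that $r^2\,dr\,dS(\omega)=dx$ and invoke $H^{N+2}(B_2)\hookrightarrow C^N(B_2)$, but the right-hand side of the lemma is an $L^2$ norm of the \emph{polar} derivatives $\partial_r,\partial_\omega$, not of Cartesian derivatives. Near $r=0$ the Cartesian gradient contains the term $r^{-1}\times(\text{angular derivative})$, so $\nabla_x F\notin L^2(B_2)$ is not ruled out by $\int|\partial_\omega F|^2\,dx<\infty$: the quantity on the right of the lemma does \emph{not} dominate the Cartesian $H^{N+2}(B_2)$ norm, and the embedding cannot be applied as stated. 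The inequality on $\{r\leq 2\}$ is nonetheless true -- for instance by the paper's argument, or by a Taylor-with-remainder estimate in $r$ from $r=1$ (two radial derivatives, using $\big(\int_r^1(\rho-r)^2\rho^{-2}\,d\rho\big)^{1/2}\lesssim 1$) combined with Sobolev on $\mathbb{S}^2$ -- but your appeal to the standard ball embedding does not establish it, and you would need to replace that step with a weighted argument that respects the degeneracy of $\partial_\omega$ at the origin.
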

\begin{proof}
  Consider the case $N'=0$. Then for $p=2({s_0^\prime}-1)$,
  \begin{equation*}
    |F(r,\omega)|\leq |\int_r^\infty \partial_r F d r|\leq \Bigl(\int_{\mathbb{R}}\langle r\rangle^p (\langle r\rangle\partial_r F)^2 r^2 dr \Bigr)^\frac{1}{2}\Bigl(\int_r^\infty\frac{d r}{\langle r\rangle^{4+p}}\Bigr)^\frac{1}{2}
    \lesssim
     \Bigl(\int_{\mathbb{R}}\langle r\rangle^p (\langle r\rangle\partial_r F)^2 r^2 dr \Bigr)^\frac{1}{2}\langle r\rangle^{\frac{-p-3}{2}}
  \end{equation*}
which proves the inequality if we also estimate the maximum over $\omega \in \mathbb{S}^2$ by the Sobolev norm $H^2(\mathbb{S}^2)$ because $(p+3)/2=s_0+1/2$.
\end{proof}

\section{Decay for all components of $A_\mu$}\label{sec:decay for A}

Suppose we have data as in Theorem \ref{thm:asymp in exterior}. Then Lemma \ref{lem:smallness of energy} together with Theorem \ref{thm:global stability} implies that we have a global solution $(A_\alpha, \phi)$ with a charge $J_\alpha$ satisfying the decay conditions in Proposition \ref{prop:decay of current}. In the following sections, we show how the decay of the charge implies the pointwise decay of the potential, as well as precise asymptotics in the exterior region $2t < r$. In particular, we always assume that we have a global solution $(A_\mu, \phi)$ satisfying the decay conditions in Theorem \ref{thm:global stability} and Proposition \ref{prop:decay of current}. \\

We start by proving the weak decay estimate for all components of $A_\mu$.

\begin{Proposition}\label{prop:weak decay of A}
With notation and assumptions as in Lemma \ref{lem:smallness of energy}
we have for $|I|\leq k-3$
        \begin{equation}\label{eq:weakA1bound}
            \big| Z^I A_\mu^1\big|\lesa \epsilon^2 S^0(t,r)\langle t+r\rangle^{-1}
            \langle (r-t)_+\rangle^{1 -2{s_0^\prime}}
            + \epsilon\langle t+r\rangle^{-1}
             \langle r-t\rangle^{\frac{1}{2} -{s_0^\prime}}
             \lesssim \epsilon \langle t+r\rangle^{-1+\epsilon}\langle t-r\rangle^{-\epsilon}
            \langle (r-t)_+\rangle^{1/2 -{s_0^\prime}} .
        \end{equation}
    Here
    \beqs S^0(t,r)
    =\frac{t}{r\!\!}\,
    \ln{\!\Big(\frac{\langle \,t\!+r\,\rangle}{\langle \,t\!-r\rangle}\Big)}
    \les
    \frac{1}{\varepsilon}
    \Big(\frac{\langle\, t\!+r \rangle}{\langle \,t\!-r\rangle}\Big)^{\varepsilon}\!\! .
    \label{eq:loggrowth}
    \eqs
\end{Proposition}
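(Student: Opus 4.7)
The plan is to apply the radial representation identity \eqref{eqn:pointwise radial wave} componentwise to $u = Z^I A^1_\mu$. Since $[\Box, \partial_\alpha] = [\Box, \Omega_{\mu\nu}] = 0$ and $[\Box, S] = 2\Box$, the Lorentz vector fields commute with the wave operator up to lower order, yielding
\[
\Box Z^I A^1_\mu = -\sum_{|K|\leq |I|} c_{IK}\, Z^K J_\mu.
\]
Rewriting $(\partial_t - \partial_r)(\partial_t + \partial_r)(ru) = -r\Box u + r^{-1}\Delta_{S^2} u$ expresses the double integral in \eqref{eqn:pointwise radial wave} as three contributions: the data on $t=0$, a double integral of $r\, Z^K J_\mu$ over the characteristic backward region, and an angular piece $r^{-1}\Delta_{S^2}Z^K A^1_\mu$. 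The last is absorbed by strong induction on $|I|$ after rewriting $\Delta_{S^2}$ as a combination of products of $\Omega_{ij}$, exploiting the derivative budget $|I|\leq k-3$ versus $|K|\leq k-2$ available from Proposition \ref{prop:decay of current}.

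The initial data contribution is immediate from \eqref{eq:A1initial}: the boundary term and the single integral on $t=0$ together contribute $\lesssim \epsilon\langle r-t\rangle^{1/2-s_0'}$, using $s_0' > 1/2$ so that $\int_{|t-r|}^{t+r}\langle\xi\rangle^{-s_0'-1/2}d\xi\lesssim \langle t-r\rangle^{1/2-s_0'}$. Dividing by $r$ and using $r\gtrsim\langle t+r\rangle$ in the regime $r\gtrsim 1$ (the small $r$ case is handled by the short length of the integration interval together with boundedness of $S^0$ near $r=0$) produces the second summand in \eqref{eq:weakA1bound}.

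The main work is the source integral. Substituting $\xi = \tau+\rho$, $\eta=\tau-\rho$ and applying \eqref{eq:Jestiamte}, the integrand is controlled by $\epsilon^2 \rho\,\langle\xi\rangle^{-2}\langle\eta\rangle^{-2s}\langle\eta_-\rangle^{-2\gamma}$ on the region $|t-r|\leq\xi\leq t+r$, $-\xi\leq\eta\leq t-r$. Performing the $\eta$ integral first and splitting at $0$: on $(-\xi, 0)$ the weight is $\langle\eta\rangle^{-2(s+\gamma)}$, which integrates to $O(1)$ since $s+\gamma>1/2$; on $(0, (t-r)_+)$ one has $\int\langle\eta\rangle^{-2s}d\eta \lesssim \langle(t-r)_+\rangle^{1-2s_0'}$ after absorbing the outer weight $\langle(t-r)_+\rangle^{-2\gamma}$. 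Bounding $\rho\leq \xi$ in the outer integral gives
\[
\epsilon^2\langle(t-r)_+\rangle^{1-2s_0'}\int_{|t-r|}^{t+r}\frac{d\xi}{\xi}\lesssim \epsilon^2\langle(t-r)_+\rangle^{1-2s_0'}\ln\frac{\langle t+r\rangle}{\langle t-r\rangle},
\]
and dividing by $r$ identifies this with $\epsilon^2 S^0(t,r)\langle t+r\rangle^{-1}\langle(r-t)_+\rangle^{1-2s_0'}$. The rightmost inequality in \eqref{eq:weakA1bound} then follows from the elementary bound $S^0\lesssim\varepsilon^{-1}(\langle t+r\rangle/\langle t-r\rangle)^\varepsilon$ noted earlier. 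The main obstacle is precisely the logarithmic factor $S^0$: it arises from the borderline $\xi^{-1}$ integrand in the outer integral and is forced by the slow $\langle t+r\rangle^{-2}$ decay of $J_\mu$, reflecting the genuine slow decay of the bad component $A^1_{\underline{L}}$.
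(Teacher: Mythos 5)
Your overall architecture (split into initial-data contribution and source contribution, then estimate via the characteristic/radial representation) is broadly in the spirit of the paper, which decomposes $A^1_\mu = A^0_\mu + A^2_\mu$ into homogeneous and inhomogeneous pieces and applies the radial estimates Lemma~\ref{lem:homwaveeqdecay} and Lemma~\ref{lem:inhomwaveeqdecay}. Your computation of the source integral itself is essentially the one hidden inside the proof of Lemma~\ref{lem:inhomwaveeqdecay}.

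However, there is a genuine gap in how you treat the angular term. You propose to apply \eqref{eqn:pointwise radial wave} directly to $u = Z^I A^1_\mu$, which is not radial, so the integrand is $(\partial_t-\partial_r)(\partial_t+\partial_r)(ru) = -r\,\Box u + r^{-1}\Delta_{\mathbb{S}^2}u$, and you must control the contribution of $r^{-1}\Delta_{\mathbb{S}^2}Z^I A^1_\mu$. Since $\Delta_{\mathbb{S}^2} = \sum\Omega_{ij}^2$, this piece is of the form $r^{-1}\sum_{|I'|\le|I|+2}Z^{I'}A^1_\mu$ --- it has \emph{two more} $Z$-derivatives of $A^1_\mu$ than the quantity you are trying to bound. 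Strong induction on $|I|$ runs in exactly the wrong direction (you would need the bound at $|I|+2$ to prove it at $|I|$), and the ``derivative budget $|I|\le k-3$ vs $|K|\le k-2$'' from Proposition~\ref{prop:decay of current} is irrelevant because that proposition bounds $Z^K J_\mu$, not $Z^{I'}A^1_\mu$. This is not a cosmetic issue: the angular term has the same slow decay as $A^1_\mu$ itself, so it cannot simply be dropped, and there is no a priori bound available for it at this stage of the argument. (Note that the paper \emph{does} absorb an angular-Laplacian term in exactly this way in the proof of Proposition~\ref{prop:decay for tangential comp of A} --- but there the needed bound on $Z^{I'}A^1_\mu$ is available precisely because Proposition~\ref{prop:weak decay of A} has already been established; using it to prove Proposition~\ref{prop:weak decay of A} itself is circular.)

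The paper avoids the angular term entirely through the positivity of the forward fundamental solution, packaged in Lemma~\ref{lem:inhomwaveeqdecay}. Concretely: set $\overline{F}(t,r)=\sup_{\omega\in\mathbb{S}^2}|J_\mu(t,r\omega)|$ and let $\overline\psi$ solve $-\Box\overline\psi = \overline{F}$ with vanishing data. By positivity, $|A^2_\mu|\le\overline\psi$, and since $\overline\psi$ is rotation-invariant, $(\partial_t-\partial_r)(\partial_t+\partial_r)(r\overline\psi)=r\overline F$ with no angular Laplacian at all; then the double characteristic integral you wrote down bounds $r\overline\psi$ cleanly. If you wish to keep your structure you should insert this comparison step before invoking \eqref{eqn:pointwise radial wave}. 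A second, smaller issue: to get the stated $\langle r-t\rangle^{\frac12-s_0'}$ decay for the homogeneous piece you need the full strength of \eqref{eq:A1initial} including the boost and scaling vector fields (which at $t=0$ give an extra $\langle r\rangle^{-1}$ on $\partial A^1_\mu(0)$), matching the weighted-sup hypotheses of Lemma~\ref{lem:homwaveeqdecay}; the bare bound $|\partial A^1_\mu(0)|\lesssim\langle r\rangle^{-s_0'-1/2}$ alone is not strong enough, and the claim $r\gtrsim\langle t+r\rangle$ ``in the regime $r\gtrsim1$'' is false in the interior and should be replaced by the $t/r$ factor built into $S^0$.
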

\begin{proof} In order to get the bound for $|I|=0$ we decompose
 $ A_\mu^1= A_\mu^2+ A_\mu^0$, where $A_\mu^0$ is the solution to
the homogenous problem $\Box A_\mu^0=0$ with the same initial data as $A^1_\mu$, and
$A_\mu^2$ is the solution of $\Box A^2_\mu= - J_\mu$ with vanishing initial data.
Using Lemma \ref{lem:inhomwaveeqdecay} and  \eref{eq:Jestiamte} gives that $A_\mu^2$
is bounded by the first term in the right hand side of \eqref{eq:weakA1bound}.
Using Lemma \ref{lem:homwaveeqdecay} and \eqref{eq:A1initial} (for $|I|\leq 1$)  gives that $A_\mu^0$
is bounded by the second term in the right hand side of \eqref{eq:weakA1bound}.
\eqref{eq:weakA1bound} for $|I|>1$ follows from first commuting with the vector fields $Z$
and then decomposing into a homogenous part and an inhomogeneous part and estimating
each part as above.
\end{proof}

The next step is to use the improved decay of the tangential components of the current \eref{eq:goodJestiamte}, to deduce improved bounds for the tangential components of the gauge $A_\mu$.

\begin{Proposition}\label{prop:decay for tangential comp of A}
With notation and assumptions as in Lemma \ref{lem:smallness of energy}
we have for $|I|\leq k-5$
\begin{align*}
\big|({\mathcal L}_Z^I {A}^1)_{T}\big|
&\lesa \epsilon^2 \langle t\!+\!r\rangle^{-1}
\langle t\!-\!r\rangle^{1-2s} \langle (r\!-\!t)_+\rangle^{-2\gamma}
+ \epsilon\langle t+r\rangle^{-1} \langle r-t\rangle^{\frac{1}{2}-{s_0^\prime}}
\lesssim \epsilon \langle t+r\rangle^{-1} \langle (r-t)_+\rangle^{1/2-{s_0^\prime}}
\end{align*}
for $T=L, S_1, S_2$.
\end{Proposition}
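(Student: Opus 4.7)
The plan is to build on the weak decay established in Proposition \ref{prop:weak decay of A} and exploit the improved decay of the tangential components of the Lie derivatives $\mathcal{L}_Z^I J$ provided by Proposition \ref{prop:decay of current}. As in the proof of the weak bound, I split $\mathcal{L}_Z^I A^1 = \mathcal{L}_Z^I A^0 + \mathcal{L}_Z^I A^2$, where $A^0$ is the free wave with the same Cauchy data as $A^1$ and $A^2$ solves $\Box A^2_\mu = -J_\mu$ with vanishing data. Since the vector fields $Z \in \{\partial_\mu, \Omega_{\mu\nu}, S\}$ belong to the conformal Lorentz algebra, $\mathcal{L}_Z^I A^2$ is a $1$-form satisfying (after absorbing harmless rescaling constants coming from $[\Box,\mathcal{L}_S]$) the componentwise equation $\Box (\mathcal{L}_Z^I A^2)_\mu = -(\mathcal{L}_Z^I J)_\mu$. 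Lemma \ref{lem:homwaveeqdecay} combined with the initial data bound \eqref{eq:A1initial} immediately yields $|(\mathcal{L}_Z^I A^0)_T| \lesssim \epsilon \lr{t+r}^{-1}\lr{r-t}^{1/2-s_0'}$, which already accounts for the second term on the right-hand side of the claim, so it remains to improve the decay of the tangential components of $\mathcal{L}_Z^I A^2$.

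For this, I write a scalar wave equation for the frame component $T^\mu (\mathcal{L}_Z^I A^2)_\mu$ by expanding
\begin{equation*}
\Box\bigl( T^\mu (\mathcal{L}_Z^I A^2)_\mu \bigr) \;=\; -(\mathcal{L}_Z^I J)_T \;+\; 2(\partial^\alpha T^\mu)\,\partial_\alpha (\mathcal{L}_Z^I A^2)_\mu \;+\; (\Box T^\mu)(\mathcal{L}_Z^I A^2)_\mu.
\end{equation*}
The principal source $(\mathcal{L}_Z^I J)_T$ enjoys the improved decay \eqref{eq:goodJestiamte}, and feeding this into the inhomogeneous decay estimate of Lemma \ref{lem:inhomwaveeqdecay} produces precisely the first term $\epsilon^2 \lr{t+r}^{-1}\lr{t-r}^{1-2s}\lr{(r-t)_+}^{-2\gamma}$ in the claimed bound. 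The two commutator terms involve $\partial T$ and $\Box T$, which for $T \in \{L, S_B\}$ carry extra factors of $r^{-1}$ and $r^{-2}$ respectively; I would bound them using the weak decay \eqref{eq:weakA1bound} applied to $\mathcal{L}_Z^{I+1} A^2$ and $\mathcal{L}_Z^I A^2$ (this costs two derivatives and is what drives the reduction from $|I| \le k-3$ to $|I| \le k-5$) and then re-insert the resulting effective source into Lemma \ref{lem:inhomwaveeqdecay} to obtain a contribution of comparable or better size.

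The main obstacle is that the null frame $\{L, S_B\}$ is not generated by Killing vectors of Minkowski space, so the contraction against $T^\mu$ produces the commutator terms above, which must be shown not to swamp the gain coming from the tangential bound on $(\mathcal{L}_Z^I J)_T$. This is made tractable by the fact that the frame coefficients $T^\mu$ are functions of $\omega$ times $r^0$ or $r^{-1}$, so each spatial derivative of $T$ automatically supplies an extra factor of $1/r$, which in the relevant exterior region $r \gtrsim t/2$ precisely matches the $\lr{t+r}^{-1}$ weight needed for the improved tangential estimate. The simplified bound $\lesssim \epsilon \lr{t+r}^{-1}\lr{(r-t)_+}^{1/2-s_0'}$ stated at the end then follows by a weight comparison: in the interior $r \le t$ one has $\lr{t-r}^{1-2s} \le 1$ (since $s>1/2$), so the inhomogeneous term is absorbed into the homogeneous one; in the exterior $r \ge t$, the exponent inequality $1-2s_0' \le 1/2 - s_0'$, valid since $s_0' > 1/2$, gives the reverse domination.
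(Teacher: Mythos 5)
Your proof is correct and follows essentially the same route as the paper: split $A^1 = A^0 + A^2$, handle $A^0$ with the homogeneous lemma, feed the improved tangential current bound into Lemma \ref{lem:inhomwaveeqdecay} for $A^2_T$, and control the frame-commutator by the weak decay estimate on two extra vector fields (accounting for the drop from $k-3$ to $k-5$). The only cosmetic difference is bookkeeping: you expand $\Box(T^\mu A_\mu)$ by the Leibniz rule, whereas the paper commutes $T^\mu$ past $L\underline{L}$ in the identity $L\underline{L}(rA^2_T) = rJ_T + \frac{1}{r}T^\mu\Delta_{\mathbb{S}^2}A^2_\mu$ and recognizes the commutator as an angular Laplacian of $A^2_\mu$; these two packagings of the commutator are equivalent and lead to the same bound.
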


\begin{proof}
We start with the case $|I|=0$. As in the previous proof, it is enough to bound the inhomogeneous component $A^2_\mu$. Similarly to the proof of Lemma 12 in  \cite{L17}, contraction with the null frame does not commute with $\Box$, which leads to a commutator term. However the commutator term involves angular derivatives which can be absorbed on the righthand side of the equation by using the weak decay bounds obtained in the previous proposition. More precisely, we first observe that we can write
    $$ L \ull ( rA_T^2) =  -r T^\mu \Box A_\mu^2 +  \frac{1}{r} T^\mu \Delta_{\mathbb{S}^2} A_\mu^2 = r J_T  + \frac{1}{r} T^\mu \Delta_{\mathbb{S}^2} A_\mu^2$$
 with $\Delta_{\mathbb{S}^2} = \sum \Omega_{ij}^2$. Noting that $r^{-1}|\Omega_{ij}  \psi| =|(\omega^i\pa_{\!j}-\omega^{j}\pa_i)  \psi|\les \langle t\!+r\rangle^{-1}\! \sum_{|I|=1}|Z^I \psi|$, we see that
\beqs
r^{-2}|\triangle_{\mathbb{S}^2} A^2_\mu |\les \langle r\rangle^{-1} \langle t+r\rangle^{-1}{\sum}_{|I|\leq 2}|Z^I A^2_\mu|,
\eqs
 and hence by Proposition \ref{prop:weak decay of A} and \eref{eq:goodJestiamte}
 we obtain for any $s+\gamma={s_0^\prime}$ with $1/2<s<1$ and $\gamma>0$
    $$
    r^{-1}|L \ull ( rA_T^2)|
     \lesa \epsilon^2 \lr{t\!+\!r}^{-3}\lr{t\!-\!r}^{1-2s} \lr{(r\!-\!t)_+}^{-2\gamma}
     + \epsilon\lr{r}^{-1} \lr{t\!+\!r}^{\epsilon -2} \lr{(r\!-\!t)_+}^{1/2-{s_0^\prime}}
     \lr{t-r}^{-\epsilon}.
     $$
 The required decay for $|I|=0$ then follows from the identity \eqref{eqn:pointwise radial wave}, together with a short computation. Alternatively, we can argue using Lemma \ref{lem:inhomwaveeqdecay}.  More precisely, take $F(t,r) = \sup_{\omega \in \mathbb{S}^3}  r^{-1}| L \ull( r A_T^2)(t, r \omega)|$ and let $\psi$ solve $\Box \psi = F$ with vanishing data. Then by the positivity of the fundamental solution, $|A_T^2| \les |\psi|$, and hence result for $|I|=0$ follows by applying Lemma \ref{lem:inhomwaveeqdecay} to $\psi$. To prove the required bound for $|I|>0$, we simply observe that for $Z \in \{\p_\mu, \Omega_{\mu \nu}, S\}$, the Lie derivatives commute with $\Box$, i.e. we have $\Box \mc{L}_Z A = \mc{L}_Z(\Box A) + c \Box A$ with $c \in \{0, 1\}$.  Hence we can repeat the argument used in the case $|I|=0$.
 \end{proof}

\section{The asymptotics for the tangential components of $A_\mu$}\label{sec:asymptotics for tang A}
In this section we deduce the asymptotic behaviour of the gauge potential $A_\mu$ along light cones. Our first result is the following.

\begin{Proposition}[Exterior Asymptotics for angular components]
With notation and assumptions as in Lemma \ref{lem:smallness of energy} with $k\geq 5$
the following hold.  The limit
\beqs
 \mathcal{A}_{S_B}(q,\omega)=\lim_{t \to\infty}
(r {A}_{S_B})\big(t,(q+t)\omega\big)
 \eqs
 exists and moreover we have the bound
 $$
 \big| r {A}_{S_B}(t,x)-\mathcal{A}_{S_B}(r-t,\omega)\big|
 \lesa  \epsilon \lr{t+r}^{\frac{1}{2} - {s_0^\prime}} + \epsilon^2 \lr{t+r}^{-1} \lr{t-r} S^0 \ind_{t>r}.
 $$
\end{Proposition}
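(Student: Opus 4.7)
The plan is to derive a radial transport equation for the scalar $rA_{S_B}$, bound its source using the improved tangential current estimate, and extract the limit by applying the integral representation \eqref{eqn:pointwise radial wave}. This mimics Step 2 of the sketch that follows \eqref{eqn:pointwise radial wave} but is simpler than for $A_L$ because no delicate cancellation is required.

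First, since $rS_B^\mu = (e_B^0)^\mu(\omega)$ depends only on the angular variable, the product $rA_{S_B} = (e_B^0)^\mu A_\mu$ is a well-defined scalar function on $\RR^{1+3}$. The scalar identity $L\ull(r\psi) = r\Box\psi + \tfrac{1}{r}\Delta_{\mathbb{S}^2}\psi$ applied to $\psi = A_{S_B}$, together with $\Box A_\mu = -J_\mu$ and a short computation of the commutator of $\Box$ with $S_B^\mu$, yields
\begin{equation*}
L\ull(rA_{S_B}) = -rJ_{S_B} + \mathcal{E},
\end{equation*}
where the commutator error is of schematic size $|\mathcal{E}| \lesssim r^{-1}\sum_{|I|\leq 2}|Z^I A|$. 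Combining the improved tangential current bound \eqref{eq:goodJestiamte} in $r > (t+1)/2$ with the weak decay of Proposition \ref{prop:weak decay of A} applied to $\mathcal{E}$, I get
\begin{equation*}
|L\ull(rA_{S_B})| \lesssim \epsilon^2 \tp^{-2}\tm^{1-2s}\lr{(r-t)_+}^{-2\gamma} + \epsilon\tp^{-2+\delta}\tm^{-\delta}\lr{(r-t)_+}^{\tfrac12 - {s_0^\prime}},
\end{equation*}
with the weaker \eqref{eq:Jestiamte} used in the interior $r < t/2$.

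Next, I apply \eqref{eqn:pointwise radial wave} to the scalar $u = A_{S_B}$. Observe that $A_{S_B} = A^1_{S_B}$ since the charge subtraction only affects $A_0$, so \eqref{eq:A1initial} gives the initial data bound $|L(rA_{S_B})|_{t'=0}(\xi\omega)| \lesssim \epsilon\lr{\xi}^{-{s_0^\prime}-1/2}$, which is integrable in $\xi$ because ${s_0^\prime} > 1/2$. Fixing $q = r-t$ and $\omega$, the first (indicator) data term is $t$-independent, the line integral has a convergent tail, and the double-integral source of $L\ull(rA_{S_B})$ is integrable over the expanding causal past: in the null coordinates $(\xi, \eta) = (t'+r', t'-r')$, the inner integral $\int_{-\xi}^{\min(0, -q)}\lr{\eta}^{1-2s-2\gamma}\,d\eta$ is controlled since $s + \gamma > 1/2$, and the outer integral in $\xi$ converges due to the factor $\xi^{-2}$. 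Passing to the limit $t + r \to \infty$ with $q$ fixed defines $\mathcal{A}_{S_B}(q,\omega)$.

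For the quantitative error, the tail of the line integral contributes
\[
\tfrac12\int_{t+r}^\infty \epsilon\lr{\xi}^{-{s_0^\prime}-1/2}\,d\xi \lesssim \epsilon\tp^{1/2 - {s_0^\prime}}.
\]
The portion of the source double integral lying in the exterior subregion $\{t' < r'\}$ of the truncated causal past contributes another $O(\epsilon\tp^{1/2-{s_0^\prime}})$ term after evaluating in null variables. The interior subregion $\{t' > r'\}$ is present in the causal past only when $t > r$, and by an estimate identical in structure to the one in the proof of Proposition \ref{prop:weak decay of A} it reproduces the factor $\epsilon^2 \tp^{-1}\tm S^0 \ind_{\{t>r\}}$. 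The main technical difficulty is precisely this last piece: tracking the logarithmic factor $S^0$ coming from the integration of the weak decay of $A^1_\mu$ across the light cone. However, since the analogous bound has already been carried out for the weak decay estimate in Proposition \ref{prop:weak decay of A}, no essentially new difficulty arises here.
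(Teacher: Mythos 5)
Your proposal is correct and follows essentially the same route as the paper: derive the radial transport equation $L\ull(rA_{S_B}) = rJ_{S_B} + \mathcal{E}$ (there is a harmless sign slip in your scalar identity, which should read $L\ull(r\psi) = -r\Box\psi + \tfrac{1}{r}\Delta_{\mathbb{S}^2}\psi$, making the source $+rJ_{S_B}$, but this disappears under absolute values), bound $J_{S_B}$ via \eqref{eq:goodJestiamte} and $\mathcal{E}$ via Proposition \ref{prop:weak decay of A}, and feed the result into the integral identity \eqref{eqn:pointwise radial wave} together with the initial-data decay and the identity $A_{S_B} = A^1_{S_B}$. One small imprecision: your schematic commutator bound $|\mathcal{E}| \lesssim r^{-1}\sum_{|I|\leq 2}|Z^I A|$ should gain a factor $\lr{t+r}^{-1}$ rather than $r^{-1}$ (as the paper records, $r^{-2}|\Delta_{\mathbb{S}^2}A| \lesssim \lr{r}^{-1}\lr{t+r}^{-1}\sum_{|I|\leq 2}|Z^I A|$); in the exterior region $r\gtrsim t$ the two agree and you correctly note the far interior is handled separately, so this does not change the outcome.
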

\begin{proof} Let
    $$ F = r \Box  A_{S_B} - \frac{1}{r} (S_B)^j \Delta_{\mathbb{S}^2} A_j. $$
An application of the identity \eref{eqn:pointwise radial wave} implies that for $t_1 > t_0>0$ and $ q > -t_0 $ we have
    \begin{align*}
      (rA)_{S_B}\big( t_1, (t_1 + q) \omega\big)& - (rA)_{S_B}\big(t_0, (t_0 + q)\omega \big) \\
        &= \frac{1}{2} \int_{2 t_0 + q}^{2 t_1 + q} (\p_t + \p_r)( r A_{S_B})(0,  \xi \omega) d\xi + \frac{1}{4} \int_{2t_0 + q}^{2t_1 + q} \int_{-\xi}^{-q} F\big( \tfrac{1}{2}(\xi + \eta), \tfrac{1}{2}(\xi-\eta) \big) d\eta d\xi.
    \end{align*}
The decay assumption on the data implies that
    $$ \int_{2 t_0 + q}^{2 t_1 + q}\big| (\p_t + \p_r)( r A_{S_B})(0,  \xi \omega)
    \big| d\xi \lesa \lr{2t_0+q}^{\frac{1}{2} - {s_0^\prime}}.$$
On the other hand, since $A^1_{S_B} = A_{S_B}$, \eref{eq:goodJestiamte} and
Proposition \ref{prop:weak decay of A} imply that for any $s+\gamma={s_0^\prime}$ with $1/2<s<1$ and $\gamma>0$
    $$ \big|F\big( \tfrac{1}{2}(\xi + \eta), \tfrac{1}{2}(\xi-\eta) \big)\big| \lesa  \epsilon^2 \lr{\xi}^{-2} \lr{\eta}^{1-2s} \lr{(-\eta)_+}^{-2\gamma} + \epsilon \lr{\xi}^{-2} \lr{(-\eta)_+}^{1-2{s_0^\prime}} S^0 +  \epsilon \lr{\xi}^{-2} \lr{\eta}^{\frac{1}{2}-{s_0^\prime}}\big)     $$
and hence a computation gives
   \begin{align*} \int_{2t_0 + q}^{2t_1 + q} \int_{-\xi}^{-q} \big| F\big( \tfrac{1}{2}(\xi +& \eta), \tfrac{1}{2}(\xi-\eta) \big)\big| d\eta d\xi \lesa \epsilon \lr{2t_0+q}^{\frac{1}{2} - {s_0^\prime}} + \epsilon^2 \lr{2t_0+q}^{-1} \lr{q} S^0 \ind_{q<0}
   \end{align*}
where we used the fact that for $0<a<1$
    \begin{equation}\label{eq:logintegral}
    \int_{-(t+r)}^{0}  \Big(1+\ln\Big(\frac{t+r}{|\eta|}\Big)\Big)\frac{d\eta}{|\eta|^{a}}
    =(t+r)^{1-a}  \int_{-1}^{0}  \Big(1+\ln\Big(\frac{1}{|\eta|}\Big)\Big)\frac{d\eta}{|\eta|^{a}}.
    \end{equation}
Therefore we conclude that
    \begin{align*}
      \big|(rA_{S_B})\big( t_1, (t_1 + q) \omega\big) - (rA_{S_B})\big(t_0, (t_0 + q)\omega \big)\big|
            &\lesa  \epsilon \lr{2t_0+q}^{\frac{1}{2} - {s_0^\prime}} + \epsilon^2 \lr{2t_0+q}^{-1} \lr{q} S^0 \ind_{q<0}
    \end{align*}
and hence the required limit exists and satisfies the claimed bound.
\end{proof}

To deduce the limit for the $A_L$ component requires more care in commuting the frame with the wave operator $\Box$. In particular, we need to exploit the key identity
    \begin{equation}\label{eqn:L commuted with box}
        L\big( r \ull( r A_L) \big) +  L(  r A_{\ull}^1 )=  r^2 J_L +  \Delta_{\mathbb{S}^2} A_L^1
    \end{equation}
which follows by observing that the Lorenz gauge condition implies that commuting the frame with the angular derivatives gives
    $$\Delta_{\mathbb{S}^2} A_L = L^\mu \Delta_{\mathbb{S}^2} A_\mu  + L( r A_{\ull}) + \ull (r A_L). $$
The identity \eqref{eqn:L commuted with box} is then a consequence of the fact that $L( r A_{\ull}) = L( r A_{\ull}^1)$, $\Delta_{\mathbb{S}^2} A_L = \Delta_{\mathbb{S}^2} A_L^1$, and the standard radial decomposition of $\Box$. Integrating \eref{eqn:L commuted with box} along characteristics $t\pm r$ and applying the decay bounds obtained earlier leads to the following.

\begin{Proposition}[Exterior Asymptotics for $A_L$] With notation and assumptions as in Lemma \ref{lem:smallness of energy} with $k\g 7$
the following hold. The limit
$$
\lim_{ t \to\infty}
(r {A}_{L})\big(t,(t+q)\omega\big) = \frac{{\mathbf{q}}}{4\pi}
 $$
 exists and moreover we have the bound
 \begin{equation}\label{eqn:AL asymp bound}
 \Big| r A_L(t,x) - \frac{1}{4\pi} {\mathbf{q}}   \Big|
 \lesa  \epsilon \lr{t+r}^{\frac{1}{2} - {s_0^\prime}}
 +\epsilon \frac{\lr{t-r}}{\lr{t+r}} S^0(t,r)\ind_{\{ t>r \}}.
 \end{equation}
\end{Proposition}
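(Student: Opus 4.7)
The plan is to apply the radial wave representation \eqref{eqn:pointwise radial wave} to $u = A_L$, computing the mixed derivative $L\ull(rA_L)$ from the key identity \eqref{eqn:L commuted with box}. Since $Lr = 1$, the Leibniz rule $L(r\ull(rA_L)) = \ull(rA_L) + rL\ull(rA_L)$ lets us rearrange \eqref{eqn:L commuted with box} into
\[
L\ull(rA_L) = \tfrac{1}{r}\Big(r^2 J_L + \Delta_{\mathbb{S}^2} A^1_L - L(rA^1_{\ull}) - \ull(rA_L)\Big).
\]
Subtracting the representation \eqref{eqn:pointwise radial wave} applied to $A_L$ at two points $(t_0, (t_0+q)\omega)$ and $(t_1, (t_1+q)\omega)$ on the same outgoing cone $\{t-r=-q\}$ (along which $|t-r|=|q|$ is constant) yields a Cauchy-type identity in which the difference $(rA_L)|_{t_1} - (rA_L)|_{t_0}$ is controlled by the boundary integral of $L(rA_L)(0,\xi\omega)$ over $\xi\in[2t_0+q,2t_1+q]$ plus a double integral $\iint_{\xi,\eta} L\ull(rA_L)(\tfrac{\xi+\eta}{2},\tfrac{\xi-\eta}{2}\omega)\,d\eta\,d\xi$ over the strip $\xi\in[2t_0+q,2t_1+q]$, $\eta\in[-\xi,-q]$. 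It suffices to show that both pieces vanish as $t_0\to\infty$, with remainders matching \eqref{eqn:AL asymp bound}.

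\textbf{Extracting the value $\mathbf{q}/(4\pi)$.}
The limit comes from the data at spatial infinity. By Lemma \ref{lem:smallness of energy} (together with the decomposition $A_0 = A^1_0 + \chi(r-t)\mathbf{q}/(4\pi r)$ and the weighted Sobolev inequality of Section \ref{sec:compatible data}), we have $rA_L(0, r\omega) = \mathbf{q}/(4\pi) + O(\epsilon\langle r\rangle^{1/2-s_0'})$ as $r\to\infty$, the error absorbing both $r\omega^j a_j$ and $rA^1_0$. Taking $t_0=0$ and $t_1\to\infty$ in the Cauchy identity, the boundary piece $\tfrac12\int_q^{2t_1+q} L(rA_L)(0,\xi\omega)\,d\xi$ telescopes (via the fundamental theorem applied to the $\p_r(rA_L)$ contribution, together with separate control of the $\p_t$ contribution using \eqref{eq:A1initial}) to $\tfrac12(\mathbf{q}/(4\pi)-(rA_L)(0,q\omega))$ plus an $O(\epsilon\langle t+r\rangle^{1/2-s_0'})$ remainder; this combines with $(rA_L)(0,q\omega)$ from \eqref{eqn:pointwise radial wave} to reproduce $\mathbf{q}/(4\pi)$.

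\textbf{The double integral and main obstacle.}
Each term in $L\ull(rA_L)/r$ is estimated separately. The contribution of $rJ_L$ uses the improved tangential estimate \eqref{eq:goodJestiamte} of Proposition \ref{prop:decay of current}; the angular term $r^{-1}\Delta_{\mathbb{S}^2}A^1_L$ is handled by converting $Z^I A_L$ to Lie derivatives via Lemma \ref{lem:lie derivative vs derivative} and invoking the improved tangential bound of Proposition \ref{prop:decay for tangential comp of A}; and $\ull(rA_L)/r = \ull A_L - A_L/r$ is handled by the same tangential bound combined with the crude weak estimate on $\ull A_L$. After change to the null variables $(\xi,\eta)$ these contribute only the benign $\epsilon\langle t+r\rangle^{1/2-s_0'}$ error, using the logarithmic change-of-variables identity \eqref{eq:logintegral} in the interior region. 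The main obstacle is the term $L(rA^1_{\ull})/r$: although $L$ is a good (tangential) derivative, the bad component $A^1_{\ull}$ only enjoys the weak decay of Proposition \ref{prop:weak decay of A} carrying the $S^0$ logarithmic loss, and after performing the $\eta$-integration via \eqref{eq:logintegral} this produces precisely the anomalous factor $\epsilon\langle t-r\rangle\langle t+r\rangle^{-1}S^0\ind_{\{t>r\}}$ appearing on the right of \eqref{eqn:AL asymp bound}, which cannot be removed and reflects the weak null structure of the Maxwell-Klein-Gordon system.
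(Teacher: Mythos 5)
Your overall plan (radial representation plus the key identity \eqref{eqn:L commuted with box}, then term-by-term estimation) is in the right spirit, but the specific rearrangement you propose loses a crucial gain that the paper's proof is built around, and as written it does not close.

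The decisive difference is this: the paper treats \eqref{eqn:L commuted with box} as a single $L$-derivative identity,
\[
L\big(r\,\ull(rA_L)\big) + L\big(rA^1_{\ull}\big) \;=\; r^2 J_L + \Delta_{\mathbb{S}^2}A^1_L,
\]
and integrates $L$ once along the outgoing characteristic; this produces $r\,\ull(rA_L) + rA^1_{\ull}$ \emph{pointwise} at $(t,x)$, with no $1/r$ under any integral. Only then does one divide by $r$ and integrate in the $\ull$ direction. You instead rearrange to isolate the double derivative $L\ull(rA_L) = \tfrac{1}{r}\big(r^2 J_L + \Delta_{\mathbb{S}^2}A^1_L - L(rA^1_{\ull}) - \ull(rA_L)\big)$ and feed everything into the double-integral formula. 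Algebraically the two agree (the paper's manoeuvre is exactly solving the linear ODE $\p_\xi H + \tfrac{1}{2r}H = \text{source}$, $H = \ull(rA_L)$, via the integrating factor $r = \tfrac{\xi-\eta}{2}$), but once you take absolute values inside and estimate term by term, they do not. The paper's ordering gives $\tfrac{1}{r_0}\int |G|\,d\xi'$ with $r_0$ the \emph{largest} value of $r$ along the $L$-segment, whereas your ordering produces $\int |G|/r'\,d\xi'$ with $r'$ varying (and $r'\leq r_0$), which is strictly larger. Concretely, for the $\Delta_{\mathbb{S}^2}A^1_L$ piece your double integral yields a bound of order $\epsilon\lr{t-r}^{1/2-s_0'}$ rather than the required $\epsilon\lr{t+r}^{1/2-s_0'}$; for $q=r-t<0$ fixed and $t\to\infty$ the former does not decay while the stated bound does. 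Worse, the self-referential term $-\ull(rA_L)/r$, which you propose to handle by a ``crude weak estimate on $\ull A_L$,'' produces after the $(\eta,\xi)$ integration a contribution of order $\epsilon\ln\big(\tfrac{\lr{t+r}}{\lr{t-r}}\big)$ with no accompanying $\tfrac{\lr{t-r}}{\lr{t+r}}$ damping; this is not dominated by the right-hand side of \eqref{eqn:AL asymp bound} near the light cone. The paper never encounters this term precisely because keeping $L(r\,\ull(rA_L))$ intact (i.e.\ using the integrating factor) makes it disappear.

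Two further, smaller points. First, after your rearrangement the individual source pieces $\tfrac{A^1_{\ull}}{r}$, $\tfrac{A_L}{r}$, and $\tfrac{\Delta_{\mathbb{S}^2}A^1_L}{r}$ are each singular as $r\to 0$; their sum $L\ull(rA_L)$ is regular only thanks to the cancellation $\Delta_{\mathbb{S}^2}(\omega^j A_j) = -2\omega^j A_j + O(r)$, and a term-by-term estimate must either exploit this or restrict to $r\gtrsim t$, neither of which you address. Second, your diagnosis of the anomalous $\tfrac{\lr{t-r}}{\lr{t+r}}S^0$ factor as coming ``precisely'' from $L(rA^1_{\ull})/r$ is incomplete: in the paper's proof the logarithm arises both from the pointwise $S^0$ in the bound on $A^1_{\ull}$ \emph{and} from integrating the $\lr{\xi}^{-1}$ decay of $r^2J_L + \Delta_{\mathbb{S}^2}A^1_L$ along the $L$-characteristic, which then gets integrated again along $\ull$. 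Both mechanisms contribute at the same order. To repair your argument the cleanest route is simply not to rearrange: integrate \eqref{eqn:L commuted with box} once in $L$ to obtain $\ull(rA_L) + A^1_{\ull}$ up to a $\tfrac{1}{r}$ at the endpoint, subtract the pointwise bound on $A^1_{\ull}$, and then integrate $\ull(rA_L)$ in the $\ull$ direction, as the paper does.
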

\begin{proof}
Integrating the identity \eref{eqn:L commuted with box} in $t+r$ to the initial data (or the axis $r=0$ if $t>r$) gives
     \begin{align*} \big(r &\ull( r A_L)+ r A_{\ull}^1 \big)(t,x) \\
        &= \ind_{\{ t<r \}} \big( r \ull ( r A_L) +  r A_{\ull}^1\big)\big(0, (r-t)\omega\big) + \frac{1}{2} \int_{|t-r|}^{t+r} \Big( r^2 J_L -  \Delta_{\mathbb{S}^2} A_L^1\Big)\big( \tfrac{1}{2}(\xi + t-r), \tfrac{1}{2}(\xi - t + r)\big) d\xi .
     \end{align*}
Note that
    $$ \big|(r\ull( r A_L))\big( 0, (r-t)\omega\big)  + (rA_{\ull}^1)
    \big(0, (r-t)\omega\big)\big|\lesa \epsilon\lr{r-t}^{\frac{1}{2} - {s_0^\prime}}$$
and from \eref{eq:goodJestiamte}, Lemma \ref{lem:lie derivative vs derivative}, and Proposition \ref{prop:decay for tangential comp of A}
    \begin{align*}\big|\big( r^2 J_L -  \Delta_{\mathbb{S}^2} A_L^1\big)(t,x)\big|
                &\lesa  \epsilon^2 \lr{t+r}^{-1} \lr{(r-t)_+}^{1 - 2{s_0^\prime}} + \epsilon \lr{t+r}^{-1} \lr{(r-t)_+}^{\frac{1}{2}-{s_0^\prime}}\\
                &\leq \epsilon \lr{t+r}^{-1} \lr{(r-t)_+}^{\frac{1}{2}-{s_0^\prime}}.
    \end{align*}
    Hence
\begin{equation*}
 \int_{|t-r|}^{t+r} \Big| r^2 J_L -  \Delta_{\mathbb{S}^2} A_L^1\Big|\big( \tfrac{1}{2}(\xi + t-r), \tfrac{1}{2}(\xi - t + r)\big) d\xi
 \lesssim \epsilon\ln\Big(\frac{\lr{t+r}}{\lr{t-r}}\Big)  \lr{(r-t)_+}^{\frac{1}{2}-{s_0^\prime}}
\end{equation*}
where we used \eref{eq:logintegral}. Therefore
    $$ \big|\ull( r A_{L})(t,x)\big| \lesa \epsilon\Big(1+\ln\Big(\frac{\lr{t+r}}{\lr{t-r}}\Big)\Big)\lr{t+r}^{-1}  \lr{(r-t)_+}^{\frac{1}{2}-{s_0^\prime}}.
    $$
If we now integrate along $t-r$, we obtain
    $$ ( r A_L)(t,x) = (r A_L)\big(0, (t+r)\omega \big) + \frac{1}{2} \int_{-(t+r)}^{t-r} \ull( r A_L)\big( \tfrac{1}{2}( t+r + \eta), \tfrac{1}{2}(t+r - \eta)\big) d\eta. $$
After noting that Lemma \ref{lem:smallness of energy} gives
    $$\Big|(r A_L)\big(0, (t+r)\omega \big) - \frac{1}{4\pi}
     \mathbf{q}  \Big| \lesa  \epsilon \lr{t+r}^{\frac{1}{2} - {s_0^\prime}}$$
after combining the above bounds we finally deduce that
    $$ \Big| r A_L(t,x) - \frac{1}{4\pi} {\mathbf{q}}   \Big| \lesa \epsilon \lr{t+r}^{\frac{1}{2} - {s_0^\prime}}  + \epsilon \frac{\lr{t-r}}{\lr{t+r}} \Big(1+\ln\Big(\frac{\lr{t+r}}{\lr{t-r}}\Big)\Big)\ind_{\{ t>r \}} . $$
\end{proof}

\section{Asymptotics of the field $\phi$}\label{sec:asymp for phi}

We now turn to the asymptotic behaviour of $\phi$ in the exterior region $r>t$.
We start by giving the decay estimates for $\phi$.
\begin{Proposition}\label{prop:exter decay for phi}
Let $|\slashed{\partial} \phi|^2=\delta^{AB}\partial_{S_A}\phi \, \,
\overline{\partial_{S_A}\phi }$.
With notation and assumptions as in Lemma \ref{lem:smallness of energy} we have for
$s+\gamma={s_0^\prime}$ and $1/2<s<1$ and $\gamma>0$
         \begin{equation}\label{eqn:decay of phi}
         {\sum}_{|I|\leq 2}|Z^I\phi|+\lr{t+r} \big(|L \phi|+|\slashed{\partial}\phi|\big)
         +\lr{t-r}\big(|\ull \phi | +|\partial \phi|\big) \lesa  \epsilon\lr{t+r}^{-1} \lr{t-r}^{\frac{1}{2}-s} \lr{(r-t)_+}^{-\gamma},\end{equation}
 \end{Proposition}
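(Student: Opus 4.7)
The bound $|\phi| \lesssim \epsilon\lr{t+r}^{-1}\lr{t-r}^{\frac12-s}\lr{(r-t)_+}^{-\gamma}$ is exactly \eqref{phi_peeling4}, so it remains to upgrade the higher-order and directional bounds. To this end, the unifying tool is the identity $\p_\alpha\phi = D_\alpha\phi - iA_\alpha\phi$, iterated as needed. For $|Z^I\phi|$ with $1\leq |I|\leq 2$, this produces a schematic expansion
\[
Z^I\phi = D_Z^I\phi + \textnormal{(correction)},
\]
where the correction is a sum of terms of the form $(Z^{J_1}A_Z)\cdots(Z^{J_m}A_Z)\, D_Z^K\phi$ with $m\geq 1$ and $|J_1|+\cdots+|J_m|+|K|=|I|$. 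Each $|D_Z^K\phi|$ is bounded by the right-hand side of the claimed estimate thanks to \eqref{phi_peeling4higher}, while each $A$-factor contributes extra decay at least $\epsilon\lr{t+r}^{-1+\epsilon}\lr{t-r}^{-\epsilon}\lr{(r-t)_+}^{\frac12-{s_0^\prime}}$ via Proposition~\ref{prop:weak decay of A} combined with Lemma~\ref{lem:lie derivative vs derivative}. Thus the corrections are strictly smaller than the principal term $D_Z^I\phi$.

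For the frame components I would exploit the peeling estimates \eqref{phi_peeling1}--\eqref{phi_peeling3} together with the improved decay of tangential components of $A$ from Proposition~\ref{prop:decay for tangential comp of A}. The bound on $|\ull\phi|$ follows from $\ull\phi = D_{\ull}\phi - iA_{\ull}\phi$ and \eqref{phi_peeling2}; multiplying by $\lr{t-r}$ yields the right-hand side, while the $A_{\ull}\phi$ piece is absorbed using the weak decay of $A$. For $|L\phi|$ I would use the identity
\[
\widetilde{D}_L\phi = \tfrac{1}{r}D_L(r\phi) = L\phi + r^{-1}\phi + iA_L\phi
\]
in the region $r\geq t/2$, together with $|A_L|\lesssim r^{-1}$ from the asymptotic $rA_L\sim \mathbf{q}/(4\pi)$ (or Proposition~\ref{prop:weak decay of A} applied to $A_L^1$ plus the charge term), noting that $|\widetilde{D}_L\phi|\lesssim \epsilon\lr{t+r}^{-s-\frac32}\lr{(r-t)_+}^{-\gamma}\lesssim \epsilon\lr{t+r}^{-2}\lr{t-r}^{\frac12-s}\lr{(r-t)_+}^{-\gamma}$ since $\lr{t-r}\leq\lr{t+r}$ and $\frac12-s<0$; for $r<t/2$ the bound \eqref{phi_peeling1} on $D_L\phi$ applies directly, and $|A_L\phi|$ is of lower order by the same weak decay argument. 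The bound on $|\p\phi|$ then follows from the frame decomposition $\p_\mu = \tfrac12\hat\omega_\mu(L-\ull) + \slashed{\p}_\mu$, the worst contribution being $|\ull\phi|$.

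The main technical subtlety is the angular derivative $\slashed{\p}\phi$: the covariant estimate $|\slashed{D}\phi|\lesssim \epsilon\lr{t+r}^{-1-s}\lr{t-r}^{-\frac12}\lr{(r-t)_+}^{-\gamma}$ does \emph{not} directly imply the desired bound $|\slashed{\p}\phi|\lesssim \epsilon\lr{t+r}^{-2}\lr{t-r}^{\frac12-s}\lr{(r-t)_+}^{-\gamma}$, since the ratio of these two is $(\lr{t+r}/\lr{t-r})^{1-s}$, which is unbounded. To circumvent this I would instead use $\slashed{\p}\phi = r^{-1}\Omega\phi$ for a spatial rotation $\Omega\in\{\Omega_{ij}\}$ and invoke the $|I|=1$ bound on $|Z^I\phi|$ established in the first step; the factor $r^{-1}\lesssim \lr{t+r}^{-1}$, valid in the region $r>t/2$ which is where the null-infinity estimate is meaningful (the region $r\lesssim 1$ being trivially controlled by initial data), supplies exactly the extra power of $\lr{t+r}^{-1}$ needed.
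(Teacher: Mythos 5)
Your decomposition $Z^I\phi = D_Z^I\phi + \text{(correction)}$ with the correction built from factors of $A_Z := Z^\alpha A_\alpha$ is the right starting point, but the bound you assign to those $A$-factors does not hold, and the repair is precisely the step the paper's proof supplies. You claim $|Z^J A_Z|\lesssim \epsilon\lr{t+r}^{-1+\epsilon}\lr{t-r}^{-\epsilon}\lr{(r-t)_+}^{\frac12-{s_0^\prime}}$ ``via Proposition~\ref{prop:weak decay of A} combined with Lemma~\ref{lem:lie derivative vs derivative}.'' But Proposition~\ref{prop:weak decay of A} bounds the \emph{components} $Z^J A^1_\mu$, and $A_Z=Z^\alpha A_\alpha$ carries the coefficient $Z^\alpha$, which grows like $\lr{t+r}$; Lemma~\ref{lem:lie derivative vs derivative} controls only $Z^J A_L$ and $Z^J A_S$, not the full contraction. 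Combining these directly would give only $|A_Z|\lesssim\epsilon\lr{t+r}^{\epsilon}\lr{t-r}^{-\epsilon}\lr{(r-t)_+}^{\frac12-{s_0^\prime}}$, which is unbounded near the light cone and, multiplied against $D_Z^K\phi$, destroys the target $\lr{t-r}^{\frac12-s}$ weight. The missing ingredient is the null decomposition
\[
|Z^\alpha A_\alpha|\ \lesssim\ \lr{t-r}\,|A| + \lr{t+r}\big(|A_L|+|A_{S_1}|+|A_{S_2}|\big),
\]
which holds because in the null-frame expansion of any $Z$ in the Lorentz algebra plus scaling the coefficient multiplying $\ull$ is $O(\lr{t-r})$ while those multiplying $L, S_B$ are $O(\lr{t+r})$. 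Then $\lr{t-r}|A|\lesssim\epsilon\lr{(r-t)_+}^{\frac12-{s_0^\prime}}$ from Proposition~\ref{prop:weak decay of A} (using $\lr{t-r}^{1-\epsilon}\lr{t+r}^{-1+\epsilon}\lesssim 1$), and $\lr{t+r}(|A_L|+|A_S|)\lesssim\epsilon\lr{(r-t)_+}^{\frac12-{s_0^\prime}}$ from the \emph{improved} tangential decay of Proposition~\ref{prop:decay for tangential comp of A}; the charge part of $A_0$ is similarly $O(\mathbf q)$ after contraction. Hence $|Z^J A_Z|\lesssim\epsilon$, which does suffice for the corrections to be lower order. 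For $|J|\geq 1$ one also needs to iterate $\mathcal L_Z(X^\alpha A_\alpha) = (\mathcal L_Z X)^\alpha A_\alpha + X^\alpha (\mathcal L_Z A)_\alpha$, using that $[Z,X]$ stays in the Lorentz algebra plus scaling.

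Your observation that the covariant bound $|\slashed D\phi|\lesssim \epsilon\lr{t+r}^{-1-s}\lr{t-r}^{-1/2}\lr{(r-t)_+}^{-\gamma}$ does \emph{not} directly give the claimed $\slashed\partial$-bound is correct, and using $\slashed\partial\phi = r^{-1}\Omega\phi$ is the right fix in the wave zone. One small inaccuracy: the region not covered by $r\gtrsim t$ is the whole interior $r\lesssim t$, not just $r\lesssim 1$; there, however, $\lr{t-r}\sim\lr{t+r}$, so the covariant bound (plus the small $|A_S\phi|$ correction from Proposition~\ref{prop:decay for tangential comp of A}) already has the required strength, and the argument closes. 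Once the $|Z^I\phi|$ bounds are in hand, the remaining frame terms in the statement follow from the standard identities $(t+r)L = S - \omega^j\Omega_{0j}$, $(t-r)\ull = S + \omega^j\Omega_{0j}$ and $r\slashed\partial\sim\Omega_{ij}$, which is how the paper deduces them without separate covariant-frame arguments.
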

\begin{proof}
  The required bounds follow  from \eqref{phi_peelinghigher} together with Proposition 
  \ref{prop:weak decay of A} and Proposition \ref{prop:decay for tangential comp of A}. First we note that
  $|Z^\alpha A_\alpha|\lesssim \langle t-r\rangle |A|+\langle t+r\rangle \big(|A_L|+|A_{S_1}|+|A_{S_2}|\big)$, which proves the case $|I|=1$. Secondly we note that
  $\mc{L}_{Z} (X^\alpha A_\alpha)=  (\mc{L}_{Z} X^\alpha) A_\alpha+X^\alpha (\mc{L}_{Z} A_\alpha)$,
  where $\mc{L}_{Z} X^\alpha=[Z,X]^\alpha$ and $[Z,X]$ is just another element of the Lorenz group plus scaling if
  $Z$ and $X$ are.
\end{proof}

We start with a first estimate that holds everywhere and is obtained just from the $L^\infty$ estimates of \cite{LS06} that follows directly from the $L^2$ estimates there:
\begin{Proposition}[First Asymptotics for $\phi$]\label{prop:first asym for phi}
With notation and assumptions as in Lemma \ref{lem:smallness of energy} with $k\g 7$ the following hold for
$s+\gamma={s_0^\prime}$, and $1/2<s<1$ and $\gamma>0$. The limit
        $$ \lim_{\xi \rightarrow \infty} \big( r e^{i \frac{1}{4\pi}{\mathbf{q}}   \ln (1+r)}\phi\big)\big( \xi, (\xi + q) \omega\big) = \Phi_0(q, \omega)$$
    exists and we have
       \begin{equation*}\label{eq:Lbarphiasymptotic_first}
       \big|\ull \big( r e^{i \frac{1}{4\pi} {\mathbf{q}}   \ln(1+ r)} \phi\big)(t,x) + 2 \p_q \Phi_0(r-t, \omega) \big| \lesa \epsilon\lr{t+r}^{1/2-s}\lr{t-r}^{-1}\lr{(r-t)_+}^{-\gamma}.
       \end{equation*}
      and
      \begin{equation*}\label{eq:phiasymptotic_first}
      \big|\big( r e^{i \frac{1}{4\pi} {\mathbf{q}}   \ln(1+ r)} \phi\big)(t,x) - \Phi_0(r-t, \omega) \big| \lesa \epsilon\lr{t+r}^{1/2-s}\lr{(r-t)_+}^{-\gamma} \end{equation*}
\end{Proposition}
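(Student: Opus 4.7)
The plan is to set $\psi(r)=\tfrac{\mathbf{q}}{4\pi}\ln(1+r)$, consider the modified field $W=re^{i\psi}\phi$, and derive a transport identity for $L\underline{L}W$ whose right-hand side is integrable in the null coordinates $(\xi,\eta)=(t+r,t-r)$. The choice of phase is dictated by the asymptotic system \eqref{eq:PhiAssSyst}: since the previous section establishes $rA_L\to\mathbf{q}/(4\pi)$ along outgoing null cones and $L\psi=\mathbf{q}/(4\pi(1+r))$, the difference $L\psi-A_L$ decays faster than either term individually, which is exactly what is needed to tame the long-range self-interaction $A_L\underline{L}\phi$ in the wave equation for $\phi$.

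Starting from $\Box\phi=-2iA^\alpha\partial_\alpha\phi+A^\alpha A_\alpha\phi$ (which follows from $D^\alpha D_\alpha\phi=0$ and the Lorenz gauge), the identity $L\underline{L}(r\phi)=-r\Box\phi+r^{-1}\Delta_{\mathbb{S}^2}\phi$, the null-frame decomposition $A^\alpha\partial_\alpha=-\tfrac12 A_L\underline{L}-\tfrac12 A_{\underline{L}}L+A^{S_B}\partial_{S_B}$, and the relations $r\underline{L}\phi=\underline{L}(r\phi)+\phi$, $rL\phi=L(r\phi)-\phi$, one arrives at an identity of the form
\begin{equation*}
L\underline{L}W=e^{i\psi}\Big[\,i(L\psi-A_L)\underline{L}(r\phi)+i(\underline{L}\psi-A_{\underline{L}})L(r\phi)+R\,\Big],
\end{equation*}
where $R$ collects the genuinely lower-order terms: $(iL\underline{L}\psi-L\psi\underline{L}\psi)\,r\phi$, $i(A_{\underline{L}}-A_L)\phi$, $2irA^{S_B}\partial_{S_B}\phi$, $-rA^\alpha A_\alpha\phi$, and $r^{-1}\Delta_{\mathbb{S}^2}\phi$. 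The main obstacle is then to show that each term admits a pointwise bound whose double integral in $(\xi,\eta)$ coordinates converges. The borderline contribution is $i(L\psi-A_L)\underline{L}(r\phi)$: the elementary bound $L\psi-\mathbf{q}/(4\pi r)=O(r^{-2})$ combined with $|rA_L-\mathbf{q}/(4\pi)|\lesssim\epsilon\lr{t+r}^{1/2-s_0'}$ from the previous section yields $|L\psi-A_L|\lesssim\epsilon\lr{t+r}^{-1/2-s_0'}$ for $r>t/2$, and paired with $|\underline{L}(r\phi)|\lesssim\epsilon\lr{t-r}^{-1/2-s}\lr{(r-t)_+}^{-\gamma}$ from Proposition \ref{prop:exter decay for phi} this gives a bound of shape $\epsilon^2\lr{\xi}^{-1/2-s_0'}\lr{\eta}^{-1/2-s-\gamma}$; the double integral over $\{\xi\geq|q|,\ -\xi\leq\eta\leq -q\}$ converges since $s>1/2$ and $s_0'=s+\gamma>1/2$. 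The remaining terms are controlled via Proposition \ref{prop:weak decay of A}, Proposition \ref{prop:decay for tangential comp of A}, and Proposition \ref{prop:exter decay for phi}, in each case with strictly better weights.

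With $G=L\underline{L}W$ suitably integrable, I would apply the radial identity \eqref{eqn:pointwise radial wave} to $u=e^{i\psi}\phi$ at the point $(t,(t+q)\omega)$ and let $t\to\infty$. The candidate limit $\Phi_0(q,\omega)$ is the sum of the initial-data contribution (for $q>0$) together with the limiting one-dimensional and double integrals, and the tails of both as $t\to\infty$ produce the rate $\epsilon\lr{t+r}^{1/2-s}\lr{(r-t)_+}^{-\gamma}$ for the second estimate. For the first estimate, I would instead integrate $L\underline{L}W$ once along $L$ at fixed $q$ and $\omega$ from the current point out to $t\to\infty$; the limit of $\underline{L}W$ is then $\underline{L}[\Phi_0(r-t,\omega)]=-2\partial_q\Phi_0$ (using $\underline{L}(r-t)=-2$), and the stated rate follows from the one-variable integration of the pointwise bound on $L\underline{L}W$ along the null cone.
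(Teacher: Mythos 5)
Your proposal is mathematically sound but takes a genuinely different route from the paper's proof of this particular proposition. The paper's proof of Proposition~\ref{prop:first asym for phi} never touches the wave equation for $\phi$ at all: it goes through the \emph{covariant} null derivative. Starting from the peeling bound $|\widetilde{D}_L D_Z^I\phi|\lesssim\epsilon\lr{t+r}^{-s-3/2}\lr{(r-t)_+}^{-\gamma}$ of Theorem~\ref{thm:global stability} (so $|D_L(rD_Z^I\phi)|\lesssim\epsilon\lr{t+r}^{-1/2-s}\lr{(r-t)_+}^{-\gamma}$), it writes the first-order identity
\[
L\big(re^{i\psi}D_Z^I\phi\big)=e^{i\psi}D_L(rD_Z^I\phi)+ir\Big(\tfrac{\mathbf{q}/4\pi}{1+r}-A_L\Big)e^{i\psi}D_Z^I\phi,
\]
bounds the second term using \eqref{eqn:AL asymp bound} together with the bound on $D_Z^I\phi$, and then performs a \emph{single} integration along $L$; no double integral appears. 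That is why the proposition is labelled ``first asymptotics'' and is introduced as ``obtained just from the $L^\infty$ estimates of \cite{LS06}.'' Your route --- deriving a transport identity for $L\ull W$ from the wave equation, decomposing $A^\alpha\partial_\alpha$ in the null frame, and applying the double radial formula \eqref{eqn:pointwise radial wave} --- is essentially the proof the paper gives for the \emph{refined} exterior asymptotics in Proposition~\ref{prop:exterior asym for phi}. It yields the statement, and in fact a slightly sharper rate in the exterior, at the cost of more work and of having to track the angular-Laplacian and $A_\ull L\phi$ terms everywhere (not just in $2r>t$).

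One concrete gap to flag: your bound $|L\psi-A_L|\lesssim\epsilon\lr{t+r}^{-1/2-s_0'}$ for $r>t/2$ uses only the first term of \eqref{eqn:AL asymp bound}. In the overlap region $t/2<r<t$ the second term $\epsilon\lr{t-r}\lr{t+r}^{-1}S^0(t,r)\ind_{\{t>r\}}$ is present and does \emph{not} decay like $\lr{t+r}^{-s_0'+1/2}$; it contributes an additional $\epsilon\lr{t+r}^{-2}\lr{t-r}^{1/2-s}S^0$ to the bound on $(L\psi-A_L)\ull(r\phi)$, and it is precisely this logarithmic-loss piece that, after the double integration (cf.\ the computation in the proof of Proposition~\ref{prop:exterior asym for phi}), produces the stated $\lr{t+r}^{1/2-s}$ rate rather than the better $\lr{t+r}^{1/2-s-\gamma}$. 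Your ``borderline term'' accounting is therefore incomplete: the $S^0$ correction is what sets the rate, and if you carry only the $\lr{t+r}^{-1/2-s_0'}$ piece you would appear to prove a strictly stronger estimate than the one claimed. Adding the $S^0$ piece and repeating the $(\xi,\eta)$-integration (as the paper does for Proposition~\ref{prop:exterior asym for phi}) closes the gap.
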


\begin{proof}
We first have the estimate
\begin{equation}
\label{DL_simppeeling}
\sum_{|I| \leq 1} | D_L(rD_Z^I\phi) | \lesssim \epsilon\lr{t+r}^{-1/2-s}\lr{(r-t)_+}^{-\gamma}.
\end{equation}
This follows almost directly from the estimates \eqref{phi_peelinghigher}.

We now take the identity
\begin{equation}
\label{L_identity}
L(re^{i\frac{1}{4\pi}\mathbf{q}\ln(1+r)}D_Z^I\phi) = e^{i\frac{1}{4\pi}\mathbf{q}\ln(1+r)}D_L(rD_Z^I\phi) + ir\left(\frac{\frac{1}{4\pi}\mathbf{q}}{1+r}-A_L\right)e^{i \frac{1}{4\pi} \mathbf{q}\ln(1+r)}D_Z^I\phi.
\end{equation}
It follows that
\begin{equation*}
\sum_{|I| \leq 1}\left|L(re^{i\frac{1}{4\pi}\mathbf{q}\ln(1+r)}D_Z^I\phi)\right| \lesssim \epsilon\lr{t+r}^{-1/2-s}\lr{(r-t)_+}^{-\gamma}.
\end{equation*}
The estimate for the first quantity on the right hand side of \eqref{L_identity} follows directly from \eqref{DL_simppeeling}. The second quantity is slightly more involved. We can replace  the term $\frac{r\mathbf{q}}{1+r}$ with $\mathbf{q}$ without issue using the (very rough) estimate
\[
\frac{1}{1+r} \lesssim \left(\frac{\lr{t-r}}{\lr{t+r}}\right)^{s-1/2}
\]
and bounding the resulting terms using \eqref{phi_peelinghigher}. We next use  \eqref{eqn:AL asymp bound} combined with \eqref{phi_peelinghigher} to show
\[
\left|(\frac{1}{4\pi}\mathbf{q} - rA_L)D_Z^I\phi\right|\lesssim \epsilon^2\left( \lr{t+r}^{\frac{1}{2} - (s+\gamma)}
 +\frac{\lr{t-r}}{\lr{t+r}} S^0(t,r)\ind_{\{ t>r \}}\right)\lr{t+r}^{-1}\lr{t-r}^{1/2-s}\lr{(r-t)_+}^{-\gamma}.
\]
This is bounded by $\epsilon\lr{t+r}^{-1/2-s}\lr{(r-t)_+}^{-\gamma}$, which follows from straightforward computation using the inequality $S^0 \leq \frac{1}{\epsilon}\left(\frac{\lr{t+r}}{\lr{t-r}}\right)^\epsilon$

 Integrating along the lines $t-r, \omega = constant$ from either $t=0$ or $r=0$ gives us the asymptotic limit for $re^{i\frac{1}{4\pi}\mathbf{q}\ln(1+r)}D_Z^I\phi$, which we call $\Phi_{0Z}^I$. Integrating backwards from future null infinity gives the bound
\begin{equation*}
\left|\Phi^I_{0Z} - re^{i\frac{1}{4\pi}\mathbf{q}\ln(1+r)}D_Z^I\phi\right| \lesssim \epsilon\lr{t+r}^{1/2-s}\lr{(r-t)_+}^{-\gamma},
\end{equation*}
We expand $\underline{L}$ in our Lorenz fields close to and far from the light cone which gives us our result.
\end{proof}

As in mentioned in introduction, we observe that we can write the equation for $\phi$ as
    $$ \Box \phi =  i A_L \ull \phi + i A_{\ull} L\phi - \frac{i}{r} \omega^k A^j \Omega_{kj} \phi + A^\alpha A_\alpha \phi. $$
    Here the first term in the right only decays like $t^{-2}$ along the light cone where
    the other terms decay  like $t^{-3}$ or $t^{-3}\ln{t}$ along the light cone. We therefore want to remove the first term.
Decomposing $\Box$ with respect to the null frame, we conclude that
     \begin{equation} \label{eqn:box phi in frame} L \ull ( r \phi) + i  {\mathbf{q}} r^{-1}  \ull (r\phi) - r^{-1} \triangle_\omega \phi=  i( {\mathbf{q}}   -  r A_L ) \ull \phi -i  {\mathbf{q}} r^{-1}\phi - i r A_{\ull} L \phi +  i  \omega^k A^j \Omega_{kj} \phi - r A^\alpha A_\alpha \phi.
     \end{equation}
The key point is the decay estimates derived in the previous sections, together with the estimate for the scalar field $\phi$ obtained in \cite{LS06}, see Proposition \ref{prop:exter decay for phi} imply that the right hand side decays at least of the order $t^{-1/2-s-\gamma}\ln{t}$ along the light cone, where $s+\gamma>1/2$ along the light cone. The right hand side as well as $r^{-1} \triangle_\omega \phi$ are therefore integrable
in the direction of the outgoing light cone so multiplying by the integrating factor $e^{i \mathbf{q} \ln{r}}$ and integrating in the $L$ direction gives that $\ull(r\phi)$ is bounded and has a limit.

Before we prove the asymptotics we first prove improved decay estimates in the region $r<t$. To avoid the singularity at the origin we modify the approach slightly and multiply with
$e^{i\frac{1}{4\pi}\mathbf{q}\ln{(1+t)}}$. We have
\begin{Proposition} \label{prop:improvedinteriordecayforfield} With notation and assumptions as in Lemma \ref{lem:smallness of energy} with $k \g 7$ the following hold for
$s+\gamma={s_0^\prime}$, and $1/2<s<1$ and $\gamma>0$.
Let $\theta(r,t)=\ln{(1+t)}\,\ind_{\{r<t+1\}}+\ln{r}\,\ind_{\{r>t+1\}}$. Then
(as distributions)
\begin{equation*}
|\Box \big(e^{i\frac{1}{4\pi}\mathbf{q}\theta(r,t)} \phi\big)|
 \lesssim \big(\epsilon \lr{t+r}^{-\frac{3}{2} - (s+\gamma)}\lr{t-r}^{-\frac{1}{2} - s}
 +\epsilon \lr{t+r}^{-3}\lr{t-r}^{\frac{1}{2} - s}S^0\big)\langle (r-t)_+\rangle^{-\gamma}.
\end{equation*}
Moreover
 \begin{equation}\label{eqn:better decay of phi}
       |\phi\,| \lesa  \epsilon\lr{t+r}^{-1} \lr{t-r}^{\frac{1}{2}-s-\gamma}
       .\end{equation}
\end{Proposition}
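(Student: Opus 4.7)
The proof splits into two steps. First, I would derive the stated pointwise (distributional) bound on $\Box(e^{if}\phi)$ with $f=\tfrac{1}{4\pi}\mathbf{q}\theta$; then apply the inhomogeneous wave-equation decay estimate to $\tilde\phi = e^{if}\phi$ to extract the improved bound on $|\phi|$. For the first step, I would expand by the product rule,
\begin{equation*}
\Box(e^{if}\phi) = e^{if}\bigl(\Box\phi + 2i\p^\alpha f\,\p_\alpha\phi + i\Box f\,\phi - \p^\alpha f\,\p_\alpha f\,\phi\bigr),
\end{equation*}
and substitute $\Box\phi = -2iA^\alpha\p_\alpha\phi + A^\alpha A_\alpha\phi$, which follows from $D^\alpha D_\alpha\phi=0$ together with the Lorenz condition $\p^\alpha A_\alpha=0$. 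After decomposing in the null frame as in the proof of Proposition \ref{prop:first asym for phi}, this becomes
\begin{equation*}
\Box\tilde\phi = e^{if}\bigl(i(A_L-Lf)\ull\phi + i(A_\ull-\ull f)L\phi - 2iA_{S_B}S_B\phi + A^\alpha A_\alpha\phi + i\Box f\,\phi - \p^\alpha f\,\p_\alpha f\,\phi\bigr).
\end{equation*}
The choice of $\theta$ is dictated by the requirement that $Lf$ cancel the leading asymptotic $\mathbf{q}/(4\pi r)$ of $A_L$: in the exterior $r>t+1$ this gives $Lf=\mathbf{q}/(4\pi r)$, and one switches to the $t$-based phase $Lf=\mathbf{q}/(4\pi(1+t))$ in the interior $r<t+1$ to avoid the singularity of $\ln r$ at $r=0$.

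Each term is then estimated using the decay estimates already proved. In the exterior $r>t+1$ one has $|A_L-Lf|=r^{-1}|rA_L-\mathbf{q}/(4\pi)|\lesssim \epsilon\lr{t+r}^{-1/2-(s+\gamma)}$ from the exterior asymptotic for $A_L$; paired with the bound on $\ull\phi$ from Proposition \ref{prop:exter decay for phi}, this contributes exactly the first term on the right-hand side of the claimed bound. In the interior no sharp asymptotic for $A_L$ is available, so I would bound $A_L$ directly using Proposition \ref{prop:weak decay of A}, which introduces the $S^0$ factor in the second term. The combination $(A_\ull-\ull f)L\phi$ is tame because $L\phi$ decays one order faster than $\ull\phi$, while the angular and quadratic contributions $A_{S_B}S_B\phi$ and $A^\alpha A_\alpha\phi$ follow from Propositions \ref{prop:decay for tangential comp of A} and \ref{prop:exter decay for phi}. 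The remaining zeroth-order terms $\Box f$ and $\p^\alpha f\,\p_\alpha f$ are evaluated explicitly (one has $\Box\ln r=r^{-2}$ in the exterior, $\Box\ln(1+t)=(1+t)^{-2}$ in the interior, with corresponding formulas for $\p^\alpha f\,\p_\alpha f$) and paired with $|\phi|$ from Proposition \ref{prop:exter decay for phi} they fit inside the same budget.

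For the pointwise bound on $\phi$, I would apply the inhomogeneous wave-equation decay estimate (Lemma \ref{lem:inhomwaveeqdecay}) to $\tilde\phi$, whose initial data are bounded pointwise by the data of $\phi$ (which is modified only by a bounded unimodular phase) and whose source is controlled by the distributional bound just derived; since $|\tilde\phi|=|\phi|$ we recover the claimed $|\phi|\lesssim\epsilon\lr{t+r}^{-1}\lr{t-r}^{1/2-s-\gamma}$. The principal technical subtlety is the discontinuity of $\nabla\theta$ across the null hypersurface $r=t+1$: the tangential components $L\theta$ and $S_B\theta$ match from both sides (both equal $1/(1+t)$ and $0$ respectively on the surface), but $\ull\theta$ jumps by $-2/(1+t)$, so $\Box\theta$ carries a delta-distribution on $r=t+1$. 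This is precisely why the statement is phrased as a distributional bound; the delta, when propagated by the fundamental solution of $\Box$, produces an outgoing free-wave contribution of size $\epsilon\lr{t+r}^{-1}\lr{t-r}^{1/2-s-\gamma}$ consistent with the target. A secondary difficulty, which I expect to be the main obstacle, is that the cancellation mechanism is weaker in the interior, so that closing the estimate there forces the appearance of the $S^0$ factor in the second term and relies on the strict inequality $s+\gamma<3/2$ assumed in \eqref{eq:sandgammacond}.
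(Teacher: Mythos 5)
Your strategy is the same as the paper's: expand $\Box\big(e^{i\frac{1}{4\pi}\mathbf{q}\theta}\phi\big)$ by the product rule, substitute $\Box\phi=-2iA^\alpha\partial_\alpha\phi+A^\alpha A_\alpha\phi$ (Lorenz gauge), decompose in the null frame so that $L\theta$ cancels the leading part of $A_L$, control $A_L-\tfrac{1}{4\pi}\mathbf{q}\,L\theta$ by \eqref{eqn:AL asymp bound} near and outside the cone and by Propositions \ref{prop:weak decay of A}, \ref{prop:decay for tangential comp of A} deep inside, and then feed the resulting source bound into Lemma \ref{lem:inhomwaveeqdecay} (with Lemma \ref{lem:homwaveeqdecay} handling the nonvanishing initial data for $\tilde\phi$). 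That is exactly the paper's argument.

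The one genuine error is what you single out as the ``principal technical subtlety'': the claim that the jump in $\ull\theta$ across $\{r=t+1\}$ forces a delta distribution in $\Box\theta$. In fact there is none. The hypersurface $\{r-t=1\}$ is null, and its conormal $\partial_\alpha(r-t-1)=L_\alpha$ is a null covector. Since the tangential derivatives $L\theta$ and $S_B\theta$ are continuous (as you observe), the jump $[\partial_\alpha\theta]$ is proportional to $L_\alpha$, so the would-be singular contribution to $\Box\theta=\partial^\alpha\partial_\alpha\theta$ has coefficient $\propto[\partial^\alpha\theta]L_\alpha\propto L^\alpha L_\alpha=0$. Equivalently, $L(r\theta)$ is $C^1$ across the surface, and $\Box\theta=-r^{-1}\ull L(r\theta)$ involves only a transverse derivative of a $C^1$ function, hence is a genuine function; one checks directly that $\Box\theta=(1+t)^{-2}$ for $r<t+1$ and $\Box\theta=r^{-2}$ for $r>t+1$, which agree on the surface. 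This is precisely why the phase is matched across a \emph{null} surface rather than a timelike one: matching $\theta$ and its tangential derivatives there already gives a bounded source. So the paper's bound is a genuine pointwise inequality and the extra propagated-delta contribution you budget for is superfluous (and in fact a literal delta on an outgoing null cone would not in general produce a harmless ``outgoing free wave'' of the claimed size). Apart from this, the sketch is sound and the constituent estimates are correctly sourced.
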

\begin{proof}
We first observe that
\begin{equation*}
\Box \big(e^{i\frac{1}{4\pi} \mathbf{q}\ln{(1+t)}} \phi\big)=e^{i\frac{1}{4\pi}\mathbf{q}\ln{(1+t)}}\Big( i\big( A_L\! -\frac{\frac{1}{4\pi}\mathbf{q}}{1\!+t}\big)\ull \phi + i\big( A_{\ull}\!-\frac{\frac{1}{4\pi}\mathbf{q}}{1\!+t}\big) L\phi
-\frac{i}{r} \omega^k \! A^j \Omega_{kj}\phi+\big( A^\alpha\! A_\alpha \! +\frac{i\frac{1}{4\pi}\mathbf{q}\!+(\frac{1}{4\pi}\mathbf{q})^2}{(1\!+t)^2} \big)\phi\Big)
\end{equation*}
Hence when $r<t+1$
\begin{multline*}
\big|\Box \big(e^{i\frac{1}{4\pi}\mathbf{q}\ln{(1+t)}} \phi\big)\big|\lesssim
 \Big( \epsilon \lr{t+r}^{-\frac{1}{2} - (s+\gamma)}
 +\epsilon \frac{\lr{t-r}}{\lr{t+r}^2} S^0(t,r)\Big) |\ull \phi|
 +\epsilon \frac{S^0(t,r)}{\lr{t+r}} \big(|L \phi|+|\slashed{\partial}\phi|\big)
 +\frac{|\phi|}{\lr{t+r}^2}\\
 \lesssim \epsilon \lr{t+r}^{-\frac{3}{2} - (s+\gamma)}\lr{t-r}^{-\frac{1}{2} - s}
 +\epsilon \lr{t+r}^{-3}\lr{t-r}^{\frac{1}{2} - s}S^0.
\end{multline*}
Similarly
\begin{equation*}
\Box \big(e^{i\frac{1}{4\pi}\mathbf{q}\ln{r}} \phi\big)\!=e^{i\frac{1}{4\pi}\mathbf{q}\ln{r}}\Big( i\big( A_L\! -\frac{\mathbf{q}}{4\pi r}\big)\ull \phi + i\big( A_{\ull}\!+\frac{\mathbf{q}}{4\pi r}\big) L\phi
-\frac{i}{r} \omega^k\! A^j \Omega_{kj}\phi+\big( A^{\!\alpha}\! A_\alpha \! +\frac{-i\frac{1}{4\pi}\mathbf{q}\!\!+(\frac{1}{4\pi}\mathbf{q})^2}{r^2} \big)\phi\Big)
\end{equation*}
Hence when $r>t+1$
\begin{multline*}
\big|\Box \big(e^{i\frac{1}{4\pi}\mathbf{q}\ln{r}} \phi\big)\big|\lesssim
  \epsilon \lr{t+r}^{-\frac{1}{2} - (s+\gamma)} |\ull \phi|
 +\epsilon \frac{S^0(t,r)}{\lr{t+r}} \big(|L \phi|+|\slashed{\partial}\phi|\big)
 +\frac{|\phi|}{\lr{t+r}^2}\\
 \lesssim \Big(\epsilon \lr{t+r}^{-\frac{3}{2} - (s+\gamma)}\lr{t-r}^{-\frac{1}{2} - s}
 +\epsilon \lr{t+r}^{-3}\lr{t-r}^{\frac{1}{2} - s}S^0\Big)\langle r-t\rangle^{-\gamma}.
\end{multline*}
The estimate \eqref{eqn:better decay of phi} now follows from Lemma \ref{lem:inhomwaveeqdecay}.
\end{proof}

We now turn to proving the asymptotics:
\begin{Proposition}[Null Asymptotics for $\phi$]\label{prop:exterior asym for phi}
With notation and assumptions as in Lemma \ref{lem:smallness of energy} with $k \g 7$ the following hold for
$s+\gamma={s_0^\prime}$, and $1/2<s<1$ and $\gamma>0$.  Then the limit
        $$ \lim_{t \rightarrow \infty} \big( r e^{ i\frac{1}{4\pi} {\mathbf{q}}   \ln (1+r)}\phi\big)\big( t, (t + q) \omega\big) = \Phi_0(q, \omega)$$
    exists and in the region $2r>t$ we have the bounds
      $$ \big|\ull \big( r e^{i {\frac{1}{4\pi} \mathbf{q}}   \ln(1+ r)} \phi\big)(t,x) + 2 \p_q \Phi_0(r-t, \omega) \big| \lesa \epsilon^2\Big( \lr{t+r}^{\frac{1}{2} - (s+\gamma)} +  \frac{\lr{t-r}}{\lr{t+r}} S^0(t,r) \Big) \lr{t-r}^{ - \frac{1}{2} - s} \lr{(r-t)_+}^{-\gamma} $$
      and
        $$\big|\big( r e^{i {\frac{1}{4\pi} \mathbf{q}}   \ln(1+ r)} \phi\big)(t,x) - \Phi_0(r-t, \omega) \big| \lesa \lr{t+r}^{\frac{1}{2} - (s+\gamma)} +  \lr{t+r}^{-1} \lr{t-r}^{\frac{3}{2}-s} S^0(t,r) \ind_{\{r<t\}}.  $$
\end{Proposition}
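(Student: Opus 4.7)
The plan is to establish a transport-type identity in the $L$ direction for the ``phase-corrected'' field $\Psi := r\,e^{i\frac{\mathbf{q}}{4\pi}\ln(1+r)}\phi$, then integrate along outgoing null cones from $(t,x)$ to future null infinity. The phase factor $e^{i\frac{\mathbf{q}}{4\pi}\ln(1+r)}$ is introduced precisely to absorb the borderline non-integrable contribution $iA_L\ull\phi$ in the equation for $\phi$, replacing it by the better behaved $i(\tfrac{\mathbf{q}}{4\pi} - rA_L)\ull\phi$, for which we already have sharp decay from the exterior asymptotics of $A_L$.

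First, I apply the Leibniz rule to push the phase past $\ull$ and $L$, using $L\ln(1+r) = -\ull\ln(1+r) = (1+r)^{-1}$, and substitute the identity \eqref{eqn:box phi in frame} to obtain an expression of the form
\[
L\ull\Psi = e^{i\frac{\mathbf{q}}{4\pi}\ln(1+r)}\Big[r^{-1}\triangle_\omega\phi + i\big(\tfrac{\mathbf{q}}{4\pi}-rA_L\big)\ull\phi - irA_{\ull}L\phi + i\omega^k A^j\Omega_{kj}\phi - rA^\alpha A_\alpha\phi - \tfrac{i\mathbf{q}}{4\pi r}\phi\Big] + \mc{R},
\]
where $\mc{R}$ collects small $O(r^{-2})$ corrections arising from the mismatch between $\ln r$ and $\ln(1+r)$. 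Each term on the right is estimated in the region $2r>t$ using results already proven: the exterior asymptotics for $A_L$ give the decisive bound $|\tfrac{\mathbf{q}}{4\pi} - rA_L|\lesa \epsilon\lr{t+r}^{1/2-s_0'} + \epsilon\frac{\lr{t-r}}{\lr{t+r}}S^0\ind_{\{t>r\}}$, which combined with the peeling bound $|\ull\phi|\lesa \epsilon\lr{t+r}^{-1}\lr{t-r}^{-1/2-s}\lr{(r-t)_+}^{-\gamma}$ of Proposition \ref{prop:exter decay for phi} produces the two source terms in the final estimate. The term $rA_{\ull}L\phi$ uses the weak decay of $A_{\ull}$ from Proposition \ref{prop:weak decay of A} together with the extra $\lr{t+r}^{-1}$ in $|L\phi|$ from \eqref{phi_peeling1higher}, and the remaining terms are strictly lower order.

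With these bounds the right-hand side is integrable in $L$ along constant $q=r-t$, $\omega$, and integration from $(t,x)$ to future null infinity yields a limit for $\ull\Psi$ there. Since Proposition \ref{prop:first asym for phi} already supplies the limit $\Psi\to \Phi_0(q,\omega)$, and $\ull$ acts as $-2\p_q$ on functions of $(q,\omega)$ alone, this limit must equal $-2\p_q\Phi_0$; the size of the tail integral gives the first claimed bound. To obtain the second bound on $|\Psi-\Phi_0|$ I integrate the parallel expression
\[
L\Psi = e^{i\frac{\mathbf{q}}{4\pi}\ln(1+r)}\Big[r\widetilde{D}_L\phi - i\big(rA_L-\tfrac{\mathbf{q}}{4\pi}\big)\phi - \tfrac{i\mathbf{q}}{4\pi(1+r)}\phi\Big]
\]
along the same outgoing cone; the first term is handled by \eqref{phi_peeling1higher}, the middle by the $A_L$-asymptotics combined with the improved interior decay $|\phi|\lesa \epsilon\lr{t+r}^{-1}\lr{t-r}^{1/2-s-\gamma}$ from Proposition \ref{prop:improvedinteriordecayforfield} (producing the $\lr{t-r}^{3/2-s}S^0\ind_{\{r<t\}}$ contribution), and the last term is lower order.

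The main obstacle is that $iA_L\ull\phi$ on the right of the equation for $\phi$ is only borderline integrable in $L$ along the cone, so a naive approach cannot close. The integrating-factor trick is indispensable: the phase $e^{i\frac{\mathbf{q}}{4\pi}\ln(1+r)}$ converts the marginally singular coefficient $A_L$ into the strictly decaying $\tfrac{\mathbf{q}}{4\pi} - rA_L$, which by the exterior asymptotics decays like $\lr{t+r}^{1/2-s_0'}$. A secondary technical point is the logarithmic loss carried by the bad component $A_{\ull}$; this is defeated by the fact that $L\phi$ enjoys an extra $\lr{t+r}^{-1}$ of decay compared to $\ull\phi$, which more than absorbs the factor $S^0$.
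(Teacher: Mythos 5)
Your overall structure is sound, and your treatment of the first bound — integrating $L\ull\Psi$ along the outgoing cone and identifying the limit with $-2\p_q\Phi_0$ via the known convergence $\Psi\to\Phi_0$ and the extra decay of $L\Psi$ — coincides with the paper's argument. The integrating-factor observation is also exactly right.

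However, there is a genuine gap in your treatment of the second bound. You propose to bound $\Psi - \Phi_0$ by integrating the \emph{pointwise} identity
\[
L\Psi = e^{i\frac{\mathbf{q}}{4\pi}\ln(1+r)}\Big[r\widetilde{D}_L\phi - i\big(rA_L-\tfrac{\mathbf{q}}{4\pi}\big)\phi - \tfrac{i\mathbf{q}}{4\pi(1+r)}\phi\Big]
\]
directly along the cone. The dominant term here is $r\widetilde{D}_L\phi$, and the only available estimate \eqref{phi_peeling1higher} gives $|r\widetilde{D}_L\phi|\lesa \epsilon\lr{t+r}^{-1/2-s}\lr{(r-t)_+}^{-\gamma}$, which carries no extra $\lr{t-r}$ or $\lr{(r-t)_+}$ decay inside the cone. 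Integrating from $t$ to $\infty$ at fixed $q=r-t<0$ then produces only $\epsilon\lr{t+r}^{1/2-s}$ — this merely reproduces the weaker bound of Proposition \ref{prop:first asym for phi} and is strictly larger than the claimed $\lr{t+r}^{1/2-(s+\gamma)}+\lr{t+r}^{-1}\lr{t-r}^{3/2-s}S^0\ind_{\{r<t\}}$ when $0>q\gg -(t+r)$. The paper instead applies the double-integral representation \eqref{eqn:pointwise radial wave} to $\Psi$: the data integrand $L\Psi(0,\cdot)$ is controlled by the initial decay $\lr{r}^{-s_0'-3/2}$, and the \emph{source} $L\ull\Psi$ decays like $\lr{\xi}^{-1/2-(s+\gamma)}\lr{\eta}^{-1/2-s}\lr{(-\eta)_+}^{-\gamma}$ (plus a term with $S^0$), so that the inner $\eta$-integration over the backward light cone — where $\lr{\eta}^{-1/2-s}$ is integrable since $s>1/2$ — recovers the missing factor $\gamma$ in the $\lr{t+r}$ exponent. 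Equivalently: the pointwise bound on $L\Psi$ is not sharp; one gains the improvement only after first integrating the source from the initial surface in the $\ull$ direction, which is exactly what \eqref{eqn:pointwise radial wave} encodes. To close your argument you would need to replace the crude pointwise estimate on $L\Psi$ along the cone by this refined bound obtained through the $\ull$-integration (or simply invoke the double-integral formula from the start, as the paper does).
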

\begin{proof} We begin  by observing that from \eref{eqn:box phi in frame} we have the identity
\begin{align*} \label{eq:wavewithphase}
    L \ull \big( r e^{i {\frac{1}{4\pi} \mathbf{q}}   \ln(1+ r)} \phi \big) &= \Big[ L \ull \big( r e^{i {\frac{1}{4\pi} \mathbf{q}}   \ln(1+ r)} \phi \big) - e^{i\frac{1}{4\pi} \mathbf{q} \ln(1+r)} \big(L \ull( r \phi) + i \frac{1}{4\pi} \mathbf{q} \ull \phi\big)\Big] \\
    &\qquad \qquad \qquad + e^{i\frac{1}{4\pi} \mathbf{q} \ln(1+r)} \big(L \ull ( r \phi) + i  {\frac{1}{4\pi} \mathbf{q}}   \ull \phi - r^{-1} \triangle_\omega \phi\big) + e^{i\frac{1}{4\pi} \mathbf{q} \ln(1+r)} r^{-1} \Delta_{\mathbb{S}^2} \phi \notag \\
        &=  e^{ i \frac{1}{4\pi} \mathbf{q} \ln(1+r)} i(\frac{1}{4\pi} \mathbf{q} - r A_{L})\ull \phi + F \notag
\end{align*}
with
    $$ e^{-i {\frac{1}{4\pi} \mathbf{q}}   \ln(1+ r)} F = \frac{  -{i\frac{1}{4\pi} \mathbf{q}}  }{1+r} \Big( \ull \phi + L(r\phi) + \frac{i \frac{1}{4\pi} \mathbf{q} + 1}{(1+r)^2}\phi\Big)  - i r A_{\ull} L \phi +  i  \omega^k A^j \Omega_{kj} \phi - r A^\alpha A_\alpha \phi  + r^{-1} \Delta_{\mathbb{S}^2} \phi. $$
Since $2r>t$, the bounds \eqref{eqn:decay of phi} give
  \begin{equation*}\label{eq:F}   |F(t,r\omega)| \lesa \frac{|\phi|}{r} +|L\phi|S^0
 + \frac{|\Omega \phi|}{r}+ \frac{|\Omega^2 \phi|}{r} + \frac{|\phi|}{r} S^0\lesa \lr{t+r}^{-2}  \lr{t-r}^{\frac{1}{2}-s} \lr{(r-t)_+}^{-\gamma} S^0.
  \end{equation*}
  Moreover
\begin{equation*}
\big|( {\frac{1}{4\pi} \mathbf{q}}   - r A_L)  \ull \phi\big|\lesssim
\Big(\epsilon \lr{t+r}^{\frac{1}{2} - (s+\gamma)}
 +\epsilon \frac{\lr{t-r}}{\lr{t+r}} S^0(t,r)\ind_{\{ t>r \}}\Big)
 \epsilon\lr{t+r}^{-1} \lr{t-r}^{ - \frac{1}{2} - s} \lr{(r-t)_+}^{-\gamma}.
\end{equation*}
Hence for $2r>t$ we have
\begin{multline*}
\big|( {\frac{1}{4\pi} \mathbf{q}}   - r A_L)  \ull \phi\big|+|F(t,r)|\\
\lesssim   \epsilon^2\lr{t+r}^{-\frac{1}{2} - (s+\gamma)} \lr{t-r}^{ - \frac{1}{2} - s} \lr{(r-t)_+}^{-\gamma}+\epsilon^2\lr{t+r}^{-2} \lr{t-r}^{\frac{1}{2}-s} \lr{(r-t)_+}^{-\gamma}  S^0
\end{multline*}
Applying the identity \eref{eqn:pointwise radial wave}, we deduce that for $t_1>t_0>0$ and $q> -t_0$ we have
    \begin{align}
      \big| \big(& r e^{i {\frac{1}{4\pi} \mathbf{q}}   \ln (1+r)}\phi\big)\big( t_1, (t_1 + q) \omega\big) - \big( r e^{i {\frac{1}{4\pi} \mathbf{q}}   \ln (1+r)}\phi\big)\big( t_0, (t_0 + q) \omega\big)\big|
      \label{eqn:proof of phi asymp ext:phi bound 1}\\
      &\lesa \int_{2 t_0 + q}^{2t_1 + q} \!\! |L \big( r e^{i {\frac{1}{4\pi} \mathbf{q}}   \ln (1+r)}\phi\big)\big( 0 , (\xi + q) \omega\big)| d\xi + \int_{2 t_0 + q }^{2t_1 + q}\!\! \int_{-\xi}^{-q} \!\!\lr{\xi}^{-2} \lr{\eta}^{\frac{1}{2}-s} \lr{(-\eta)_+}^{-\gamma} S^0\big( \tfrac{1}{2}(\xi+\eta), \tfrac{1}{2}(\xi-\eta)\big) d\eta d\xi \notag \\
      &\qquad \qquad + \int_{2 t_0 + q }^{2t_1 + q} \int_{-\xi}^{-q} \lr{\xi}^{-\frac{1}{2} - (s+\gamma)}\lr{\eta}^{-\frac{1}{2}-s} \lr{(-\eta)_+}^{-\gamma} d\eta d\xi. \notag
    \end{align}
The decay of the data $(\phi(0), \p_t \phi(0) )$ immediately gives control over the first integral in \eref{eqn:proof of phi asymp ext:phi bound 1}. A more involved computation gives for every $\epsilon_0>0$ the bounds
    $$ \int_{2 t_0 + q }^{2t_1 + q} \int_{-\xi}^{-q} \lr{\xi}^{-2} \lr{\eta}^{\frac{1}{2}-s} \lr{(-\eta)_+}^{-\gamma} S^0\big( \tfrac{1}{2}(\xi+\eta), \tfrac{1}{2}(\xi-\eta)\big) d\eta d\xi \lesa  \lr{t_0}^{\frac{1}{2} - (s+\gamma)}  + \lr{t_0}^{-1} \lr{q}^{\frac{3}{2}-s} S^0(t_0, t_0+q) \ind_{\{q<0\}} $$
and
    $$ \int_{2 t_0 + q }^{2t_1 + q} \int_{-\xi}^{-q} \lr{\xi}^{-\frac{1}{2} - (s+\gamma)}\lr{\eta}^{-\frac{1}{2}-s} \lr{(-\eta)_+}^{-\gamma} d\eta d\xi
            \lesa \lr{t_0}^{\frac{1}{2} - (s+\gamma)} \lr{q}^{\frac{1}{2} - s} \lr{q_+}^{-\gamma} .$$
Therefore, we conclude that
    \begin{multline*}
       \big| \big( r e^{i {\frac{1}{4\pi} \mathbf{q}}   \ln (1+r)}\phi\big)\big( t_1, (t_1 + q) \omega\big) - \big( r e^{i {\frac{1}{4\pi} \mathbf{q}}   \ln (1+r)}\phi\big)\big( t_0, (t_0 + q) \omega\big)\big|\\
       \lesa \lr{t_0}^{\frac{1}{2} - (s+\gamma)} +  \lr{t_0}^{-1} \lr{q}^{\frac{3}{2}-s} S^0(t_0, t_0+q) \ind_{\{q<0\}}
    \end{multline*}
which implies that the limit
    $$\lim_{\xi \to \infty} \big(r e^{i {\frac{1}{4\pi} \mathbf{q}}   \ln (1+r)}\phi\big)\big( \xi, (\xi + q) \omega\big) = \Phi_0(q, \omega) $$
exists and satisfies the claimed bound. To check the limit for $\ull( r e^{i {\frac{1}{4\pi} \mathbf{q}}   \ln (1+r)}\phi)( t_0, (t_0 + q) \omega)$, we note that by integrating along $t+r$, we have for $t_1>t_0>0$ and $q>-t_0$ the identity
     \begin{multline*}  \ull \big( r e^{i {\frac{1}{4\pi} \mathbf{q}}   \ln (1+r)}\phi\big)\big( t_1, (t_1 + q) \omega\big) - \ull \big( r e^{i {\frac{1}{4\pi} \mathbf{q}}   \ln (1+r)}\phi\big)\big( t_0, (t_0 + q) \omega\big)\\
      = \frac{1}{2} \int_{2t_0 + q}^{2 t_1 + q}  L \ull \big( r e^{i {\frac{1}{4\pi} \mathbf{q}}   \ln (1+r)}\phi\big)\big(\tfrac{1}{2}(\xi-q), \tfrac{1}{2}(\xi+q)\big) d\xi.
     \end{multline*}
In particular, again using the decay bounds obtained above, we see that
    \begin{align*}
      \big| \ull \big( r &e^{i {\frac{1}{4\pi} \mathbf{q}}   \ln (1+r)}\phi\big)\big( t_1, (t_1 + q) \omega\big) - \ull \big( r e^{i {\frac{1}{4\pi} \mathbf{q}}   \ln (1+r)}\phi\big)\big( t_0, (t_0 + q) \omega\big)\big|\\
      &\lesa \int_{2t_0 + q}^{2t_1 + q} \epsilon^2\lr{\xi}^{-\frac{1}{2} - (s+\gamma)} \lr{q}^{ - \frac{1}{2} - s} \lr{q_+}^{-\gamma}+\epsilon^2\lr{\xi}^{-2} \lr{q}^{\frac{1}{2}-s} \lr{q_+}^{-\gamma}  S^0\big( \tfrac{1}{2}(\xi-q), \tfrac{1}{2}(\xi+q)\big) d\xi \\
      &\lesa \epsilon^2\lr{t_0}^{\frac{1}{2} - (s+\gamma)} \lr{q}^{ - \frac{1}{2} - s} \lr{q_+}^{-\gamma}+\epsilon^2 \lr{t_0}^{-1}S^0(t_0,t_0+q) \lr{q}^{\frac{1}{2}-s} \lr{q_+}^{-\gamma}
    \end{align*}
Consequently the limit
    $$\lim_{t \to \infty} \ull \big(r e^{i {\frac{1}{4\pi} \mathbf{q}}   \ln (1+r)}\phi\big)\big( t, (t + q) \omega\big)$$
exists, and satisfies the claimed bounds. Finally, the identity
    $$ \ull G \big( t, (t + q)\omega\big) = - 2 \p_q \big[ G\big( t, (t + q)\omega \big) \big] + L G\big( t, (t+q)\omega\big)$$
together with the additional decay of $L( r e^{i {\frac{1}{4\pi} \mathbf{q}}   \ln (1+r)}\phi)( t, (t + q) \omega)$ in $t$, implies that
        $$ \lim_{t \to \infty} \ull \big(r e^{i {\frac{1}{4\pi} \mathbf{q}}   \ln (1+r)}\phi\big)\big( t, (t + q) \omega\big) = -2 \p_q \Phi(q, \omega).$$
\end{proof}

\section{Asymptotics for $A_{\ull}$}\label{sec:asymp for non-tang A}
The first step is use Proposition \ref{prop:exterior asym for phi} to replace $J_{\ull}$ with its asymptotic along light cones. Define
    $$ \mc{J}_{\ull}(q,\omega) = - 2\Im\Big( \Phi_0(q, \omega) \overline{\p_q \Phi_0(q, \omega)} \Big)$$
where $\Phi_0(q, \omega)$ is as in Proposition \ref{prop:exterior asym for phi}. We want to express $J_{\ull}$ in terms of $\mc{J}_{\ull}$ and a remainder which has additional decay. To this end, we first observe that
    \begin{align*}
       r^2 J_{\ull} - \mc{J}_{\ull} &=  \Im\Big(  r e^{i {\frac{1}{4\pi} \mathbf{q}}   \ln(1+r)} \phi \overline{ \ull( r e^{ i {\frac{1}{4\pi} \mathbf{q}}   \ln(1+r)} \phi) }\Big)
                + r^2 \Big( \frac{\frac{1}{4\pi} \mathbf{q}}{1+r} - A_{\ull}\Big) |\phi|^2  - \mc{J}_{\ull}\\
                &= \Im\Big( \big( r e^{i\frac{1}{4\pi} \mathbf{q} \ln(1+r)} \phi - \Phi_0) \overline{ \ull ( r e^{i\frac{1}{4\pi} \mathbf{q} \ln(1+r)} \phi)}\Big) - \Im\Big(r e^{i\frac{1}{4\pi} \mathbf{q} \ln(1+r)} \phi \overline{\big( \ull(r e^{i\frac{1}{4\pi} \mathbf{q} \ln(1+r)} \phi)  + 2 \p_q \Phi_0 \big)}\Big) \\
                &\qquad \qquad - \Im\Big(\big( r e^{i\frac{1}{4\pi} \mathbf{q} \ln(1+r)} \phi - \Phi_0)\overline{\big( \ull(r e^{i\frac{1}{4\pi} \mathbf{q} \ln(1+r)} \phi)  + 2 \p_q \Phi_0 \big)} \Big) + r^2 \Big( \frac{\frac{1}{4\pi} \mathbf{q}}{1+r} + A_{\ull}\Big) |\phi|^2
    \end{align*}
An application of Propositions \ref{prop:weak decay of A}, \ref{prop:exter decay for phi}, and \ref{prop:exterior asym for phi} then gives:
\begin{Lemma}\label{lem:Jest} With notation and assumptions as in Lemma \ref{lem:smallness of energy} with $k \g 7$ the following hold for
$s+\gamma={s_0^\prime}$, and $1/2<s<1$ and $\gamma>0$. We have
    \begin{equation*} \big|\,r^2 J_{\ull}  - \mc{J}_\ull\big|
            \lesa \epsilon^2\lr{t+r}^{{1}/{2} - (s+\gamma)} \lr{t-r}^{-\frac{1}{2} - s} \lr{(r-t)_+}^{-\gamma} + \epsilon^2\lr{t+r}^{-1} \lr{t-r}^{1-2s} \lr{(r-t)_+}^{-2\gamma} S^0.
    \end{equation*}
\end{Lemma}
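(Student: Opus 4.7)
The plan is to bound each of the four terms in the decomposition of $r^2 J_{\ull} - \mc{J}_{\ull}$ set out in the paragraph immediately preceding the lemma. Writing $\psi := r e^{i\frac{1}{4\pi}\mathbf{q}\ln(1+r)}\phi$ for brevity, that decomposition has the schematic form $I_1 + I_2 + I_3 + I_4$, where $I_1 = \Im\bigl((\psi - \Phi_0)\overline{\ull\psi}\bigr)$, $I_2 = -\Im\bigl(\psi\,\overline{\ull\psi + 2\p_q\Phi_0}\bigr)$, $I_3 = -\Im\bigl((\psi - \Phi_0)\overline{\ull\psi + 2\p_q\Phi_0}\bigr)$, and $I_4$ is the residual $|\phi|^2$ term obtained when one expresses $r^2\Im(\phi\overline{D_{\ull}\phi})$ in terms of $\Im(\psi\overline{\ull\psi})$ and reconciles the phase correction with $A_{\ull}$. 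Each $I_j$ is a product of two factors, both of which have been estimated in earlier sections, so the proof reduces to applying those bounds and collecting terms.

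For $I_1$, I would use Proposition \ref{prop:exterior asym for phi} to bound $|\psi - \Phi_0|$ by $\epsilon\lr{t+r}^{1/2-(s+\gamma)} + \epsilon\lr{t+r}^{-1}\lr{t-r}^{3/2-s}S^0\ind_{\{r<t\}}$, while a direct Leibniz expansion together with Proposition \ref{prop:exter decay for phi} gives $|\ull\psi|\lesa |\phi| + r|\ull\phi|\lesa \epsilon\lr{t-r}^{-1/2-s}\lr{(r-t)_+}^{-\gamma}$. The product of these two bounds reproduces exactly the two terms on the right-hand side of the lemma. For $I_2$, the same propositions give $|\psi| \lesa r|\phi| \lesa \epsilon\lr{t-r}^{1/2-s}\lr{(r-t)_+}^{-\gamma}$ and the refined asymptotic bound on $|\ull\psi + 2\p_q\Phi_0|$ supplied by Proposition \ref{prop:exterior asym for phi}; the product again lies inside the stated estimate. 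The cross term $I_3$ is a product of two smallness factors and so is strictly dominated by the bounds obtained for $I_1$ and $I_2$.

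For the potential term $I_4$, I would use the splitting $A_{\ull} = A^1_{\ull} + \chi(r-t)\frac{1}{4\pi r}\mathbf{q}$ from Section \ref{sec:compatible data}. The purely charge-driven piece of $I_4$ then carries a coefficient that cancels to $O(r^{-2})$ in the exterior $\{r > t+1\}$ (where the two $1/r$ tails nearly coincide) and reduces to the $\frac{1}{4\pi(1+r)}\mathbf{q}$ term in the interior (where $\chi \equiv 0$); multiplication by $r^2|\phi|^2 \lesa \epsilon^2 \lr{t+r}^{-2}\lr{t-r}^{1-2s}\lr{(r-t)_+}^{-2\gamma}$ produces a contribution controlled by the second term on the right of the lemma. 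The remaining $r^2 A^1_{\ull}|\phi|^2$ piece is handled using the weak decay bound for $A^1_{\ull}$ from Proposition \ref{prop:weak decay of A}, which combines with $|\phi|^2$ to yield a bound of the same form, picking up the factor $S^0$ from the $A^1$ estimate.

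The main obstacle is purely bookkeeping: one must check that the worst contributions in each of the four products — the interior $S^0$ piece of $|\psi - \Phi_0|$ in $I_1$, the $\lr{t+r}^{1/2-(s+\gamma)}$ piece of $|\ull\psi + 2\p_q\Phi_0|$ in $I_2$, and the $A^1_{\ull}$ weak-decay contribution in $I_4$ — are all absorbed into the two terms on the right-hand side of the lemma without any extra logarithmic loss. Once this is verified, the estimate follows by summing the four contributions.
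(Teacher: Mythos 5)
Your decomposition into $I_1,\dots,I_4$ is exactly the one the paper sets up in the paragraph before the lemma, and your handling of $I_1,I_2,I_3$ matches the paper's one-line proof ("apply Propositions \ref{prop:weak decay of A}, \ref{prop:exter decay for phi}, \ref{prop:exterior asym for phi}"): the bounds you quote for $|\psi-\Phi_0|$, $|\ull\psi|$, $|\psi|$, $|\ull\psi+2\p_q\Phi_0|$ are the correct ones, and multiplying them out does land inside the stated right-hand side. So far this is a faithful filling-in of the paper's argument.

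The one place your reasoning goes wrong is $I_4$. You claim that the two charge tails $\frac{\mathbf q}{4\pi(1+r)}$ and $\chi(r-t)\frac{\mathbf q}{4\pi r}$ cancel to $O(r^{-2})$ in the exterior. They do not: working out the signs with $\psi=re^{i\frac{1}{4\pi}\mathbf q\ln(1+r)}\phi$, one finds $\Im(\psi\overline{\ull\psi})=r^2\Im(\phi\overline{\ull\phi})+\frac{\frac{1}{4\pi}\mathbf q}{1+r}r^2|\phi|^2$, while $J_{\ull}=\Im(\phi\overline{\ull\phi})-A_{\ull}|\phi|^2$, so the residual is $-r^2\bigl(\frac{\frac{1}{4\pi}\mathbf q}{1+r}+A_{\ull}\bigr)|\phi|^2$, i.e. a \emph{plus} sign in front of $A_{\ull}$ (this agrees with the second displayed line of the paper's decomposition; the first displayed line contains a sign typo that you seem to have inherited). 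Since $A_{\ull}=A^1_{\ull}+\chi(r-t)\frac{\mathbf q}{4\pi r}$, the two $1/r$ pieces \emph{add} in the exterior, and the coefficient is only $O(\langle r\rangle^{-1})$, not $O(r^{-2})$. Consequently your attempt to absorb $I_4$ into the $S^0$ term fails in the far exterior, where $S^0\to 0$.

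The conclusion is nonetheless true, and the fix is short: $|I_4|\lesssim r^{-1}\cdot r^2|\phi|^2\lesssim\epsilon^2\lr{t+r}^{-1}\lr{t-r}^{1-2s}\lr{(r-t)_+}^{-2\gamma}$, and in the exterior (where $\lr{(r-t)_+}\approx\lr{t-r}\lesssim\lr{t+r}$ and $s+\gamma<3/2$) this is dominated by the \emph{first} term $\epsilon^2\lr{t+r}^{1/2-(s+\gamma)}\lr{t-r}^{-1/2-s}\lr{(r-t)_+}^{-\gamma}$, not the second. In the interior, $S^0\gtrsim 1$ and your absorption into the second term is fine. With that correction the proposal completes the proof along the same lines as the paper.
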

In particular, again applying Proposition \ref{prop:weak decay of A}, we see that for $2r>t$ we have
         \begin{equation}\label{eqn:decay of correction to rhs for A Lbar}
            \begin{split} &\Big| L \ull ( r A_\ull) - \frac{1}{r} \mc{J}_\ull \Big|\\
                &\lesa  \epsilon\lr{t+r}^{-2}\big( \lr{r-t}^{\frac{1}{2} - (s+\gamma)} + \lr{(r-t)_+}^{1-2(s+\gamma)} S^0\big) +  \epsilon\lr{t+r}^{-\frac{1}{2} - (s+\gamma)} \lr{t-r}^{-\frac{1}{2} - s} \lr{(r-t)_+}^{-\gamma}.
            \end{split}
         \end{equation}
In other words we can write $L\ull( r A_\ull) = \frac{1}{r} \mc{J}_\ull + \text{better}$. Applying the formula \eref{eqn:pointwise radial wave} then gives the following.

\begin{Proposition}\label{prop:exterior asymp for A Lbar}
With notation and assumptions as in Lemma \ref{lem:smallness of energy} with $k \g 7$ the following hold for
$s+\gamma={s_0^\prime}$, and $1/2<s<1$ and $\gamma>0$.
Define
     $$ A^{mod}_{\ull}(t,r\omega) = A_{\ull}\big( t, r\omega\big)
            -\frac{1}{2r} \int_{r-t}^\infty \mc{J}_{\ull}(\eta, \omega) \ln\Big( \frac{\eta + t+r}{\eta + t-r}\Big) d\eta. $$
Then the limit
    $$ \lim_{t \to \infty} ( r A^{mod}_{\ull})\big( t, (t + q) \omega \big) = \mc{A}_{\ull}(q, \omega)$$
exists and satisfies
    $$ \big| ( r A^{mod}_{\ull})(t,r\omega) - \mc{A}_{\ull}(r-t, \omega) \big| \lesa \epsilon \lr{t+r}^{\frac{1}{2}-(s+\gamma)} +  \epsilon\frac{\lr{t-r}}{\lr{t+r}} S^0 \ind_{\{t>r\}}.$$
\end{Proposition}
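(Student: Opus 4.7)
The plan is to choose the modification $U := A_\ull - A^{mod}_\ull$ so that $L\ull(rU)$ exactly reproduces the leading-order source $\tfrac{1}{r}\mc{J}_\ull(r-t,\omega)$ identified in \eqref{eqn:decay of correction to rhs for A Lbar}. Once this matching is achieved, $L\ull(rA^{mod}_\ull)$ inherits the improved decay on the right of \eqref{eqn:decay of correction to rhs for A Lbar}, and the radial representation formula \eqref{eqn:pointwise radial wave} applied to $A^{mod}_\ull$ on the cone $r=t+q$ yields existence of $\mc{A}_\ull(q,\omega)$ together with the claimed error bound by the same Cauchy-in-$t_0$ strategy already used for $A_{S_B}$ and $A_L$.

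\textbf{The source-cancellation identity.} The heart of the argument is to verify
\[
L\ull(rU)(t,r\omega) = \frac{1}{r}\mc{J}_\ull(r-t,\omega), \qquad rU := \frac{1}{2}\int_{r-t}^\infty \mc{J}_\ull(\eta,\omega)\ln\!\Big(\tfrac{\eta+t+r}{\eta+t-r}\Big)\,d\eta.
\]
I will do this in double null coordinates $u=t-r$, $v=t+r$, where $L = 2\pa_v$, $\ull = 2\pa_u$, and
\[
2rU = \int_{-u}^\infty \mc{J}_\ull(\eta,\omega)\big(\ln(\eta+v) - \ln(\eta+u)\big)\,d\eta.
\]
Differentiation in $v$ hits only the $\ln(\eta+v)$ term, so $\pa_v(2rU) = \int_{-u}^\infty \mc{J}_\ull(\eta,\omega)/(\eta+v)\,d\eta$. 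A single Leibniz boundary term at $\eta=-u$ then gives $\pa_u\pa_v(2rU) = \mc{J}_\ull(-u,\omega)/(v-u)$, which unfolds via $L\ull = 4\pa_u\pa_v$ and $v-u = 2r$ to the claimed identity. The apparent $\log$-singularity of the original integrand at $\eta = r-t$ is integrable, and the double-null viewpoint avoids the two cancelling $\log(0)$-pieces that a naive $(t,r)$-calculation would produce.

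\textbf{Integration along characteristics.} Subtracting the identity of the previous step from \eqref{eqn:decay of correction to rhs for A Lbar} shows that $L\ull(rA^{mod}_\ull)$ satisfies the same decay bound as the right side of \eqref{eqn:decay of correction to rhs for A Lbar}. The initial datum $(rA^{mod}_\ull)(0,r\omega)$ is controlled by the bound on $rA_\ull(0,r\omega)$ from Lemma \ref{lem:smallness of energy} together with an elementary integration of the correction term using $|\mc{J}_\ull(\eta,\omega)| \lesssim \epsilon^2\lr{\eta}^{-2s}\lr{\eta_+}^{-2\gamma}$. Applying \eqref{eqn:pointwise radial wave} to $u = A^{mod}_\ull$ on the cone $r = t+q$ gives, for $t_1>t_0>0$ and $q>-t_0$,
\[
(rA^{mod}_\ull)\big(t_1,(t_1+q)\omega\big) - (rA^{mod}_\ull)\big(t_0,(t_0+q)\omega\big) = \tfrac12\!\!\int_{2t_0+q}^{2t_1+q}\!\!\! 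L(rA^{mod}_\ull)(0,\xi\omega)\,d\xi + \tfrac14\!\!\int_{2t_0+q}^{2t_1+q}\!\!\!\int_{-\xi}^{-q}\!\!\! L\ull(rA^{mod}_\ull)\,d\eta\,d\xi.
\]
The first integral is bounded by the data contribution from Lemma \ref{lem:smallness of energy}, while the second is bounded by integrating the decay of $L\ull(rA^{mod}_\ull)$ in $(\xi,\eta)$, using \eqref{eq:logintegral} to handle the $S^0$-factor, by the same computations as in the $A_{S_B}$ and $A_L$ proofs. Letting $t_1 \to \infty$ gives existence of $\mc{A}_\ull(q,\omega)$ along with the stated quantitative error bound.

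\textbf{Main obstacle.} The technical heart is the source-cancellation identity: the integrand defining $rU$ is only conditionally integrable at $\eta = r-t$, and naive $(t,r)$-differentiation generates cancelling divergences which would require an integration by parts against the decay of $\mc{J}_\ull$ at infinity to tame. The passage to double null coordinates, where the singular behaviour is absorbed into one harmless Leibniz boundary term, turns the identity into a transparent one-line computation and is the key observation; the remainder of the proof is a routine adaptation of the $A_L$ argument.
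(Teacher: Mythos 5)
Your proposal is correct and follows essentially the same route as the paper: both reduce the claim to the improved decay of $L\ull(rA^{mod}_\ull)$ via \eqref{eqn:decay of correction to rhs for A Lbar} and then integrate along characteristics using the radial representation \eqref{eqn:pointwise radial wave}. The difference is one of organization: you make explicit and prove up front the pointwise source-cancellation identity $L\ull(rU) = r^{-1}\mathcal{J}_\ull(r-t,\omega)$ (with the neat double-null observation that differentiating $\ln(\eta+v)$ first and only then picking up the Leibniz boundary term at $\eta=-u$ sidesteps the cancelling logarithmic divergences), after which \eqref{eqn:pointwise radial wave} can be applied directly to $A^{mod}_\ull$. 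The paper instead applies \eqref{eqn:pointwise radial wave} to $A_\ull$ and verifies the difference identity \eqref{eqn:prop asym for A Lbar:main ident} by hand, using $\ln\!\big(\tfrac{\eta+2t_1+q}{\eta+2t_0+q}\big)=\int_{2t_0+q}^{2t_1+q}\tfrac{d\xi}{\xi+\eta}$ and Fubini; your identity is exactly the differential form of that computation. Your version is arguably cleaner and more conceptual, and correctly absorbs the data term $\tfrac12\int L(rA^{mod}_\ull)(0,\xi\omega)\,d\xi$ (which you must and do control via the bound $|L(rU)(0,\xi\omega)|\lesssim\epsilon^2\lr{\xi}^{-2(s+\gamma)}$ coming from $|\mathcal J_\ull(\eta)|\lesssim\epsilon^2\lr\eta^{-2s}\lr{\eta_+}^{-2\gamma}$). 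One small wording point: the integral defining $rU$ is absolutely, not merely conditionally, convergent at $\eta=r-t$ since $\mathcal J_\ull$ is bounded and $\ln$ is locally integrable; this does not affect the argument.
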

\begin{proof}
We begin by claiming that, for $t_1>t_0>0$ and $q>-t_0$, we have the identity
    \begin{equation}\label{eqn:prop asym for A Lbar:main ident}
        \begin{split}
          \big| (r A^{mod}_{\ull}&)\big(t_1, (t_1 + q)\omega \big) - ( r A^{mod}_{\ull}) \big( t_0, (t_0 + q)\omega\big) \big| \\
                &= \frac{1}{4} \int_{2 t_0 + q}^{2t_1 + q} \int_{q}^\xi \Big( L \ull (r A_{\ull}) - \frac{1}{r} \mc{J}_{\ull} \Big) \Big( \frac{\xi - \eta}{2}, \frac{\xi+\eta}{2}\Big) d\eta d\xi + \frac{1}{2} \int_{2t_0 + q}^{2t_1 + q} \int_\xi^\infty \frac{1}{\xi+\eta} \mc{J}_{\ull}(\eta, \omega) d\eta d\xi.
        \end{split}
    \end{equation}
We leave this identity for the moment, and turn to the problem of bounding the righthand side of \eref{eqn:prop asym for A Lbar:main ident}. The second integral is straightforward, since by Propositions \ref{prop:exterior asym for phi} and \ref{prop:exter decay for phi} we have  $ |\mc{J}_{\ull}(\eta, \omega)| \lesa  \epsilon\lr{\eta}^{-2s}\lr{(\eta)_+}^{-2\gamma}$ and hence
    $$ \int_{2t_0 + q}^{2t_1 + q} \int_\xi^\infty \frac{1}{\xi+\eta} |\mc{J}_{\ull}(\eta, \omega)| d\eta d\xi \lesa  \epsilon\lr{t_0}^{1-2(s+\gamma)}.$$
On the other, to bound the first integral in \eref{eqn:prop asym for A Lbar:main ident}, we apply \eref{eqn:decay of correction to rhs for A Lbar} which gives
    \begin{align*}
      \frac{1}{4} \int_{2 t_0 + q}^{2t_1 + q} \int_{q}^\xi& \Big|\Big( L \ull (r A_{\ull}) - \frac{1}{r} \mc{J}_{\ull} \Big) \Big( \frac{\xi - \eta}{2}, \frac{\xi+\eta}{2}\Big)\Big| d\eta d\xi \\
            &\lesa  \epsilon\int_{2 t_0 + q}^{2t_1 + q} \int_{q}^\xi \lr{\xi}^{-2}\big( \lr{\eta}^{\frac{1}{2} - (s+\gamma)} + \lr{\eta_+}^{1-2(s+\gamma)} S^0\big) + \lr{\xi}^{-\frac{1}{2} - (s+\gamma)} \lr{\eta}^{-\frac{1}{2} - s} \lr{\eta_+}^{-\gamma} d\eta d\xi \\
            &\lesa   \epsilon\lr{t_0}^{\frac{1}{2} - (s+\gamma)} +  \epsilon \lr{t_0}^{-1} \lr{q} S^0 \ind_{\{q<0\}}.
    \end{align*}
Thus letting $t_0 \to \infty$ in \eref{eqn:prop asym for A Lbar:main ident}, we see that the limit exists, and satisfies the claimed bound. It remains to check the identity \eref{eqn:prop asym for A Lbar:main ident}, but this is a consequence of \eref{eqn:pointwise radial wave} together with
    \begin{align*}
      \frac{1}{2} \int_{q}^\infty \mc{J}_{\ull}(\eta, \omega) &\ln\Big( \frac{\eta + 2t_1 + q}{\eta - q}\Big) d\eta  - \frac{1}{2}
      \int_{q}^\infty \mc{J}_{\ull}(\eta, \omega) \ln\Big( \frac{\eta + 2t_0 + q}{\eta -q}\Big) d\eta \\
      &=  \frac{1}{2} \int_q^\infty \mc{J}_{\ull}(\eta, \omega) \ln\Big( \frac{\eta + 2 t_1 + q}{\eta + 2 t_0 + q} \Big) d\eta \\
      &=  \frac{1}{2} \int_{2t_0+ q}^{2t_1 + q} \int_q^\xi \frac{1}{\xi+\eta} \mc{J}_{\ull}(\eta, \omega)d\eta d\xi +
             \frac{1}{2} \int_{2t_0+ q}^{2t_1 + q} \int_\xi^\infty \frac{1}{\xi+\eta} \mc{J}_{\ull}(\eta, \omega) d\eta d\xi.
    \end{align*}

\end{proof}

\section{Asymptotics in the Interior}\label{sec:interior asymp}
Here we give the proof of Theorem \ref{thm:asymp in interior}. The proof uses a proposition from \cite{L17} on the wave equation with asymptotic sources (namely Proposition 9.3 below), but is otherwise self contained.

Recall the decomposition $A_\mu^1 = A_\mu^2 + A_\mu^0$ from the proof of Proposition \ref{prop:weak decay of A}, where $A^0_\mu$ solves the homogeneous wave equation with the same data as $A^1_{\mu}$, and $A^2_\mu$ solves the inhomogeneous equation with vanishing initial data. By the theory of linear homogeneous equations we have the estimate
\[
|Z^IA^0_\mu| \lesssim \epsilon \lr{t+r}^{-1}\lr{t-r}^{1/2-s-\gamma}.
\]
It follows that in the interior $r < c t$, with $c < 1$, we have the estimate
\[
|tA_\mu^0(t,r\omega)| \lesssim \epsilon\lr{t-r}^{1/2-s-\gamma}.
\]
Consequently, since the correction to $A_\mu$ only plays a role in the exterior region $t<|x|$, we conclude that for $|y|<1$ we have
    $$ \lim_{t\to \infty} tA_\mu(t, ty) = \lim_{t\to \infty} t A^2_\mu(t,ty).$$
In particular, to prove Theorem \ref{thm:asymp in interior}, it is enough to show the following result:
\begin{Theorem}\label{th:int_asymptotics} With notation and assumptions as in Lemma \ref{lem:smallness of energy} with $k \g 7$ the following hold for
$s+\gamma={s_0^\prime}$, and $1/2<s<1$ and $\gamma>0$.
Given $\omega' \in \mathbb{S}^2$, $c < 1$, we have the limit
\[
\lim_{t \to \infty}tA^2_\mu(t, ct \omega') = \frac{1}{4\pi}\int_{-\infty}^\infty\int_{\mathbb{S}^2}\frac{\mc{J}_\mu(q, \omega)}{1-c\lr{\omega', \omega}}\, dS(\omega) \, dq,
\]
where $\mc{J}_\mu$ is the asymptotic source term
\begin{equation}\label{mcJdef}
\mc{J}_\mu(q,\omega) = L_\mu(\omega)\Im\left(\Phi_0(q, \omega)\overline{\partial_q\Phi_0(q, \omega)}\right).
\end{equation}
Additionally, for $t \geq 1$, we have the following bound on the difference:
\begin{equation*}
\left|tA^2_\mu(t, c t \omega') - \frac{1}{4\pi}\int_{-\infty}^\infty\int_{\mathbb{S}^2}\frac{\mc{J}_\mu(q, \omega)}{1-c\lr{\omega', \omega}}\, dS(\omega) \, dq\right| \lesssim \epsilon^2\lr{t-r}^{1/2-s-\gamma} + \epsilon^2\left(1+\ln\left(\frac{t+r}{t-r}\right)\right) |t-r|^{1-2s}
\end{equation*}
The implicit constant in $\lesssim$ in particular does not depend on the value of $c$.
\end{Theorem}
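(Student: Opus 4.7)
The plan is to invoke Proposition 9.3 from Lindblad \cite{L17}, which handles inhomogeneous wave equations $\Box u = F$ with zero data when $F$ has an asymptotic profile $F(t,r\omega) = r^{-2}\mc{F}(r-t,\omega)$ along outgoing null infinity plus a controlled remainder; that proposition gives both the existence of $\lim_{t\to\infty}tu(t,ct\omega')$ and the explicit value $\frac{1}{4\pi}\int\!\!\int \mc{F}(q,\omega)(1-c\lr{\omega',\omega})^{-1}\, dS(\omega)\, dq$, with a quantitative error of the form in the statement. Since $\Box A^2_\mu = -J_\mu$ with vanishing initial data, the task reduces to showing that $r^2 J_\mu$ has asymptotic profile $\mc{J}_\mu(q,\omega)$ and that the remainder $r^2 J_\mu - \mc{J}_\mu(r-t,\omega)$ decays at the rate required by Proposition 9.3.

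\textbf{Identifying the profile.} To identify $\mc{F}=\mc{J}_\mu$, decompose $J_\mu = \Im(\phi\overline{D_\mu\phi})$ in the null frame using $\p_\mu = \tfrac12\hat\omega_\mu(\p_r-\p_t) + \text{tangential}$ with $\hat\omega_\mu = -L_\mu(\omega)$, so the dominant contribution is in the $L_\mu$-direction and involves $\ull\phi$. Combined with the null asymptotic $re^{i\frac{\mathbf{q}}{4\pi}\ln(1+r)}\phi \to \Phi_0(r-t,\omega)$ from Proposition~\ref{prop:exterior asym for phi} together with its $\ull$-derivative statement, and noting that the real charge phase drops out after taking imaginary parts, one obtains $r^2 J_\mu \to L_\mu(\omega)\Im(\Phi_0\overline{\p_q\Phi_0}) = \mc{J}_\mu(q,\omega)$. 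Lemma~\ref{lem:Jest} provides the precise remainder bound for the $\ull$-contraction in the exterior $r>t/2$; the $L$- and $S_B$-contractions of $\mc{J}_\mu$ vanish identically (since $L^\mu L_\mu = L^\mu (S_B)_\mu = 0$), so those cases reduce to the stronger decay bounds in Proposition~\ref{prop:decay of current}. In the interior, Proposition~\ref{prop:improvedinteriordecayforfield} gives $|J_\mu| \lesssim \epsilon^2\lr{t+r}^{-2}\lr{t-r}^{1-2s-2\gamma}$, which is more than enough to close the error estimate when convolved with the retarded Kirchhoff kernel.

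\textbf{Application, error, and main obstacle.} Feeding the decomposition $J_\mu = r^{-2}\mc{J}_\mu(r-t,\omega) + R_\mu$ into Proposition 9.3 of \cite{L17} yields the claimed limit. The two pieces of the error bound correspond to the two terms of the remainder in Lemma~\ref{lem:Jest}: the $\epsilon^2\lr{t-r}^{1/2-s-\gamma}$ term arises from the ``free-wave''-like piece of the remainder, while the $\epsilon^2(1+\ln\tfrac{t+r}{t-r})|t-r|^{1-2s}$ term arises from propagating the $S^0$-dependent piece of the remainder through the retarded fundamental solution to the interior observation point $(t,ct\omega')$, the logarithm reflecting the borderline integrability of that piece along the incoming null cones from that point. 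Uniformity in $c$ is automatic, since the Poisson-like kernel $(1-c\lr{\omega',\omega})^{-1}$ is bounded by $(1-c)^{-1}$ uniformly in the angles. The main obstacle is that $\mc{J}_\mu$ is only barely integrable in $q$ at infinity (controlled by $\lr{q}^{-2s}\lr{q_+}^{-2\gamma}$ with $s>1/2$), so the integral defining the limit converges only in a marginal sense, and the sharpness of the logarithmic error term requires careful tracking of near-cone contributions to the Kirchhoff integrand --- precisely the scenario Proposition 9.3 of \cite{L17} is engineered to handle.
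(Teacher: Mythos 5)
Your overall framework matches the paper's: approximate $J_\mu$ by an asymptotic source $r^{-2}\mc{J}_\mu$, compare $A^2_\mu$ to the solution $A^{as}_\mu$ with that source, and invoke Proposition 23 of \cite{L17} (restated as Proposition~\ref{prop:asymptotic source estimate}). However, there is a substantive gap in what you attribute to that proposition. You claim it ``gives both the existence of $\lim_{t\to\infty} tu(t,ct\omega')$ and the explicit value $\frac{1}{4\pi}\int\!\!\int \mc{F}(q,\omega)(1-c\lr{\omega',\omega})^{-1}\,dS(\omega)\,dq$, with a quantitative error of the form in the statement.'' It does not. What Proposition 23 actually delivers is the comparison $|A^{as}_\mu - A^{ex}_\mu|\lesssim \lr{t+r}^{-1}\lr{t-r}^{1-2s}$, where $A^{ex}_\mu$ is the explicit representation
\[
A^{ex}_\mu(t,x)=\int_{|x|-t}^\infty\frac{1}{4\pi}\int_{\mathbb{S}^2}\frac{\mc{J}_\mu(q,\omega)}{t+q-\lr{x,\omega}}\chi_0\!\left(\frac{\lr{q}}{t+|x|}\right)dS(\omega)\,dq,
\]
with the retarded-time kernel $1/(t+q-\lr{x,\omega})$ and a cutoff in $q$. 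To obtain the target kernel $1/(t-\lr{x,\omega})$ with $q$-integration over all of $\mathbb{R}$, the paper still has to: (i) remove the cutoff for $q\geq 0$; (ii) replace $t+q-\lr{x,\omega}$ by $t-\lr{x,\omega}$, which is the content of Lemma~\ref{lem:asymptotic source estimate} and where the angular integral identity \eqref{eq:angint} and the change of variables $q=(t-r)q'$ produce exactly the $r^{-1}|t-r|^{1-2s}$ term; (iii) remove the cutoff for $q<0$; and (iv) extend the lower limit of $q$-integration from $|x|-t$ to $-\infty$. Steps (ii)--(iv) each generate an error of size $\epsilon^2 r^{-1}|t-r|^{1-2s}\ln((t+r)/(t-r))$, which is where the logarithmic term in the final estimate actually comes from --- not from ``near-cone contributions to the Kirchhoff integrand,'' as you suggest.

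Two smaller points. First, the uniformity in $c$ is not ``automatic since $(1-c\lr{\omega',\omega})^{-1}\leq(1-c)^{-1}$'' --- that bound blows up as $c\to 1$, so it is precisely the wrong thing to use. Uniformity holds because the paper estimates every difference in terms of $t$ and $r$ directly (using \eqref{eq:angint} which makes the $(1-c)^{-1}$-scale singularity explicit and cancellable), so the constants never see $c$. Second, when you argue that the interior contribution is handled via Proposition~\ref{prop:improvedinteriordecayforfield}, note that the paper instead inserts the cutoff $\chi_0(\lr{r-t}/(r+t))$ into the asymptotic source $J^\infty_\mu$ and then uses the basic bound \eqref{eq:Jestiamte} for $J_\mu$ outside its support, where $t+r\lesssim\lr{r-t}$; this gives the needed decay without invoking the sharper field estimate. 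Your route would also work, but it is worth noting that the interior is not where the difficulty lies.

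In short: your proposal correctly identifies the profile $\mc{J}_\mu$, the role of Lemma~\ref{lem:Jest}, and the two-stage comparison $A^2_\mu\to A^{as}_\mu\to A^{ex}_\mu$, but you stop one stage short. Passing from $A^{ex}_\mu$ to the stated limit $\frac{1}{4\pi}\int\!\!\int\mc{J}_\mu(q,\omega)/(1-c\lr{\omega',\omega})\,dS\,dq$ is a nontrivial separate estimate, and it is the one that produces (and explains) the logarithmic error.
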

\begin{proof}
We break this down into two steps.

Our first objective is to approximate $A^2_\delta$ by an explicit solution $A_\delta^{ex}$. Our method for this is the construction of an intermediate approximation, $A^{as}_\delta$. This represents a solution to the wave equation whose source is an asymptotic approximation to the current vector $J_\delta$.

We will in particular use the inequality
\begin{equation*}
|A^2_\mu - A^{ex}_\mu| \leq |A^2_\mu - A^{as}_\mu| + |A^{as}_\mu - A^{ex}_\mu|.
\end{equation*}

We note that in our null frame, $\mc{J}_{\ull} = \mc{J}_\mu\ull^\mu$ is consistent with its value in the proof of Proposition \ref{prop:exterior asymp for A Lbar}, and $\mc{J}_L = \mc{J}_{S_B} = 0$. We additionally define the asymptotic source approximation
\begin{equation*}
J^\infty_\mu = r^{-2}\mc{J}_{\mu}\chi_0\left(\frac{\lr{r-t}}{r+t}\right).
\end{equation*}

Here, $\chi_0$ is a smooth decreasing cutoff such that
\[
\chi_0(s) = \begin{cases}
1 & s\leq 1/2, \\
0 & s \geq 3/4.
\end{cases}
\]
 The presence of the cutoff function allows us to characterize the asymptotic behavior of the source term close to the light cone without running into issues at $r=0$. We now consider the following estimate:
\begin{equation}\label{eq:JmuEst}
|J_\mu - J^\infty_\mu| \lesssim \epsilon^2\lr{t+r}^{-3/2-s-\gamma}\lr{t-r}^{-1/2-s}\lr{(r-t)_+}^{-\gamma} + \epsilon^2\lr{t+r}^{-3}\lr{t-r}^{1-2s}\lr{(r-t)_+}^{-2\gamma}S^0.
\end{equation}

This follows almost directly from Lemma \ref{lem:Jest}. In particular, the estimate in the support of $\chi_0$ for $J_\ull$ directly follows from the estimate, the estimate for all other components and in the exterior follows from \eqref{eq:Jestiamte} for the $J_\ull$ outside the support of $\chi_0$, where in particular we have $r+t \lesssim \lr{r-t}$, and from \eqref{eq:goodJestiamte} for other components.

 Therefore, given the equation
\begin{equation*}
\Box (A^2_\mu - A^{as}_\mu) = - J_\mu +J^\infty_\mu,
\end{equation*}
with initial conditions
\[(A^2_\mu - A^{as}_\mu)(x, 0) = \partial_t(A^2_\mu - A^{as}_\mu)(x, 0) =0,
\]
we have the following estimate for $r < c t, c< 1$, which follows directly from  Lemma \ref{lem:inhomwaveeqdecay} using the estimate \eqref{eq:JmuEst} :
\begin{lemma}\label{Aar est} With notation and assumptions as in Lemma \ref{lem:smallness of energy} with $k \g 7$ the following hold for
$s+\gamma={s_0^\prime}$, and $1/2<s<1$ and $\gamma>0$.
\begin{equation*}
|A^2_\mu - A^{as}_\mu| \lesssim \epsilon^2\lr{t+r}^{-1}\lr{t-r}^{\max(1-2s, 1/2-s-\gamma)}.
\end{equation*}
\end{lemma}
This in particular does not contribute to the long-range asymptotic behavior of $A_\mu$.

 We now restate Proposition 23 from  \cite{L17}, which will be of use when solving the wave equation with the asymptotic source term $J^\infty$:

\begin{Proposition}[{\cite[Proposition 23]{L17}}]
\label{prop:asymptotic source estimate}
Take
\begin{equation}
\label{def:Aex}
A_\mu^{ex}(t,x) = \int_{|x|-t}^{\infty}\frac{1}{4\pi}\int \frac{\mc{J}_\mu(q, \omega)}{t+q-\lr{x, \omega}}\, dS(\omega)\, \chi_0\left(\frac{\lr{q}}{t+|x|}\right)\, dq,
\end{equation}
and $A^{as}_\mu$ solving the wave equation
\begin{equation*}
\Box A^{as}_\mu = -J^\infty_\mu
\end{equation*}
with vanishing initial data, where
for some $1/2<s<1$
\begin{equation*}
|\mc{J}_\mu(q, \omega)|\lesssim \langle q\rangle^{-2s}
\end{equation*}
We have the estimate
\begin{equation*}
|A^{as}_\mu - A^{ex}_\mu| \lesssim \epsilon^2\lr{t+r}^{-1}\lr{t-r}^{1-2s}.
\end{equation*}
\end{Proposition}

Combining Lemma \ref{Aar est} and Proposition \ref{prop:asymptotic source estimate} gives us our first estimate:
\begin{Lemma} With notation and assumptions as in Lemma \ref{lem:smallness of energy} with $k \g 7$ the following hold for
$s+\gamma={s_0^\prime}$, and $1/2<s<1$ and $\gamma>0$.
Given $A^2_\mu$ satisfying the equation
\[
\Box A^2_\mu = -J_\mu
\]
with initial data equal to zero
and $A_\mu^{ex}$ as defined in \eqref{def:Aex}, where $\mc{J}$ is the asymptotic limit of $J$ as defined in \eqref{mcJdef}, we have the estimate
\begin{equation*}
|A_\mu - A_\mu^{ex}| \lesssim \epsilon^2\lr{t+r}^{-1}\lr{t-r}^{\max(1-2s, 1/2-s-\gamma)}
\end{equation*}
\end{Lemma}

Now we look at the asymptotic behavior of $A_\mu^{ex}$. We for now focus on lines $r = ct, \omega = constant$, and split into the cases $c \leq 1/8$ and $1/8 < c < 1$. The proofs for these cases are very similar and we in general only distinguish them when a factor of $r^{-1}$ appears. We first integrate in $\omega$. For $r = ct$, we wish to show that this integral approaches the integral centered at 0; i.e. we wish to show that, for $x =ct \omega'$, for fixed constant $c$ and fixed $\omega' \in \mathbb{S}^2$, the limit
\begin{equation*}
\mc{A}_\mu^{int}\left(c, \omega'\right) = \lim_{t \to \infty}\left(\frac{1}{4\pi}t\int_{(c - 1)t}^{\infty}\int_{\mathbb{S}^2}\frac{\mc{J}_{\mu}(q,\omega)}{t(1 + q/t -c\lr{\omega', \omega})} \chi_0\left(\frac{\lr{q}}{t+ct}\right)\, dS(\omega)\,dq\right)
\end{equation*}
exists.

We can in fact show that this limit is equal to
\begin{equation*}
\int_{-\infty}^{\infty}\int_{\mathbb{S}^2}\frac{1}{4\pi}\frac{\mc{J}_{\mu}(q,\omega)}{(1 - c\lr{\omega', \omega})}\,dS(\omega)\,dq.
\end{equation*}
We consider only when $t \geq 1$.

 We first take the following useful identity which holds when $|x| < a$:
\begin{equation}\label{eq:angint}
\int_{\mathbb{S}^2}\frac{dS(\omega)}{a-\lr{x, \omega}}= \frac{2\pi}{|x|}\ln\left(\frac{a+|x|}{a-|x|}\right).
\end{equation}

We first set
\begin{equation*}\label{def:Aex10}
A_\mu^{ex,1}(t,x) = \int_{|x|-t}^{\infty}-\frac{1}{4\pi}\int \frac{\mc{J}_\mu(q, \omega)}{t+q-\lr{x, \omega}}\, \chi_1(r,t,q) dS(\omega)\, \, dq,
\end{equation*}
where $\chi_1(r,t,q)=\chi_0\big(\langle q\rangle/(t+r)\big)$, when $q<0$ and $=1$ when $q\geq 0$.
Then it is immediate that
\begin{equation*}
|A^{ex}_\mu-A^{ex,1}_\mu| \lesssim \epsilon^2  |t|^{-2s},
\end{equation*}
since $\lr{q} > \frac12(r+t) \geq |t|$ in the support of $\chi_1-\chi_0$, with the additional condition that $\lr{q} \geq \frac12\lr{t}$ when $t > 1/2$. Next we define
\begin{equation*}\label{def:Aex20}
A_\mu^{ex,2}(t,x) = \int_{|x|-t}^{\infty}\frac{1}{4\pi}\int \frac{\mc{J}_\mu(q, \omega)}{t-\lr{x, \omega}}\,\chi_1(r,t,q) dS(\omega)\, \, dq.
\end{equation*}
The difference bound for this term is slightly more complicated. We treat it as follows:

\begin{lemma}
\label{lem:asymptotic source estimate}
Take
\begin{equation*}
\label{def:Aex1}
A_\mu^{ex,1}(t,x) = \int_{|x|-t}^{\infty}\frac{1}{4\pi}\int \frac{\mc{J}_\mu(q, \omega)}{t+q-\lr{x, \omega}}\, \chi(r,t,q) dS(\omega)\, \, dq,
\end{equation*}
\begin{equation*}
\label{def:Aex2}
A_\mu^{ex,2}(t,x) = \int_{|x|-t}^{\infty}\frac{1}{4\pi}\int \frac{\mc{J}_\mu(q, \omega)}{t-\lr{x, \omega}}\,\chi(r,t,q) dS(\omega)\, \, dq,
\end{equation*}
and suppose that
\begin{equation*}
|\mc{J}_\mu(q, \omega)|\lesssim \langle q\rangle^{-2s}, \qquad |\chi(r,t,q)|\lesssim 1
\end{equation*}
Then for $r<t$
\begin{equation*}
|A^{ex,1}_\mu-A^{ex,2}_\mu| \lesssim t^{-1} |t-r|^{1-2s}.
\end{equation*}
\end{lemma}
\begin{proof}

We consider the case $\frac{|x|}{t} \geq \frac18$; the far interior case follows from the fact that in that region we can use the following inequality which holds in the support of $\chi$:
\[
\frac{1}{t+q-\lr{x, \omega}} - \frac{1}{t-\lr{x, \omega}}\lesssim \frac{q}{t^2},.
\]
which follows straightforwardly from the fact that both functions are bounded below by $\frac{1}{32t}$ in this region.

In the region $\frac{|x|}{t} \geq \frac18$ we introduce coordinates on the sphere with $\omega_1=\langle x/r,\omega\rangle$ and integrate over the other angles we get $dS(\omega)=d\omega_1$, and hence
\begin{multline*}
|A^{ex,1}_\mu-A^{ex,2}_\mu|\\
\lesssim
\int_{r-t}^0\int_{S^2} \Big( \frac{1}{t+q-\lr{x, \omega}}-\frac{1}{t-\lr{x, \omega}}\Big)\, dS(\omega)  \frac{dq}{\langle q\rangle^{2s}}
+\int_{0}^\infty\int_{S^2} \Big( \frac{1}{t-\lr{x, \omega}}-\frac{1}{t+q-\lr{x, \omega}}\Big)\, dS(\omega) \frac{dq}{\langle q\rangle^{2s}} \\
\lesssim
\int_{r-t}^0\frac{1}{r}
\Big(\ln{\Big(\frac{t+r+q}{t-r+q}\Big)} -\ln{\Big(\frac{t+r}{t-r}\Big)} \Big)\frac{dq}{\langle q\rangle^{2s}}
+\int_{0}^\infty  \frac{1}{r} \Big(\ln{\Big(\frac{t+r}{t-r}\Big)}-\ln{\Big(\frac{t+r+q}{t-r+q}\Big)}\Big)\frac{dq}{\langle q\rangle^{2s}}\\
=\int_{r-t}^0\frac{1}{r}
\Big(\ln{\Big(\frac{t-r}{t-r+q}\Big)} -\ln{\Big(\frac{t+r}{t+r+q}\Big)} \Big)\frac{dq}{\langle q\rangle^{2s}}
+\int_{0}^\infty  \frac{1}{r} \Big(\ln{\Big(\frac{t-r+q}{t-r}\Big)}-\ln{\Big(\frac{t+r+q}{t+r}\Big)}\Big)\frac{dq}{\langle q\rangle^{2s}}\\
\lesssim \int_{r-t}^0\frac{1}{r}
\ln{\Big(\frac{t-r}{t-r+q}\Big)}\frac{dq}{\langle q\rangle^{2s}}
+\int_{0}^\infty  \frac{1}{r} \ln{\Big(\frac{t-r+q}{t-r}\Big)}\frac{dq}{\langle q\rangle^{2s}}\lesssim\frac{1}{r}\frac{1}{|t-r|^{2s-1}},
\end{multline*}
as is seen by changing variable $q=(t-r)q'$.
\end{proof}
With
\begin{equation*}\label{def:Aex30}
A_\mu^{ex,3}(t,x) = \int_{|x|-t}^{\infty}\frac{1}{4\pi}\int \frac{\mc{J}_\mu(q, \omega)}{t-\lr{x, \omega}}\, dS(\omega)\, \, dq,
\end{equation*}
it is also immediate that
\begin{equation*}
|A^{ex,2}_\mu-A^{ex,3}_\mu| \lesssim \epsilon^2  r^{-1}|r-t|^{1-2s}\ln\left(\frac{t+r}{t-r}\right),
\end{equation*}
which follows from the identity \eqref{eq:angint} combined with a change of variables similar to the previous lemma.

Finally defining
\begin{equation*}\label{def:Aexinfinity0}
A_\mu^{ex,\infty}(t,x) = \int_{-\infty}^{\infty}\frac{1}{4\pi}\int \frac{\mc{J}_\mu(q, \omega)}{t-\lr{x, \omega}}\, dS(\omega)\, \, dq,
\end{equation*}

it is also immediate that
\begin{equation*}
|A^{ex,3}_\mu-A^{ex,\infty}_\mu| \lesssim \epsilon^2 r^{-1} |t-r|^{1-2s}\ln\left(\frac{t+r}{t-r}\right),
\end{equation*}
since $q<-(t-r)$ in the difference integral.

Finally, we note that in the far interior, with $r < \frac{t}{2}$ for instance, we have the estimate
\[
r^{-1}\ln\left(\frac{t+r}{t-r}\right) \lesssim t^{-1},
\]
so in particular we do not have a singularity at $r=0$. When $r \geq \frac{t}{2}$ we must use the slightly worse estimate
\[
r^{-1}\ln\left(\frac{t+r}{t-r}\right) \lesssim t^{-1}\ln\left(\frac{t+r}{t-r}\right).
\]

Therefore, we have the estimate
\begin{equation*}
|A_\mu^{ex} - A_\mu^{ex, \infty}| \lesssim \epsilon t^{-1}|t-r|^{1-2s}\left(1+\ln\left(\frac{t+r}{t-r}\right)\right)
\end{equation*}

Our result follows.
\end{proof}

\section{Appendix: The radial estimates}\label{sec:appendix}
Here we briefly state some radial estimates that are slight improvement of estimates in \cite{L17}

\begin{lemma}\label{lem:inhomwaveeqdecay} If $-\Box \phi=F$, with
vanishing data, where
\beqs \label{eq:goodinhomdecay} |F|\leq
\frac{C}{(1+r)(1+t+r)(1+|\,t-r|)^{1+\delta}},\qquad \delta>0
 \eqs
 then with $q_+=r-t$, when $r\geq t$ and $q_+=0$, when $r\leq t$, and $ \langle \, q\,\rangle=\sqrt{1+q^2}$ we have
 \begin{equation}\label{eq:logest1}
 |\phi|\leq\frac{C S^0(t,r)}{(1+t+r)\,(1+q_+)^\delta},\qquad
 \text{where}\quad S^0(t,r)
 =\frac{t}{r}\ln{\Big(\frac{\langle \,t+r\,\rangle}{\langle\,t-r\,\rangle}\Big)}.
 \end{equation}

 On the other hand suppose that for some $\mu>0$
 \begin{equation*}\label{eq:goodinhomdecay2} |F|\leq
\frac{C}{(1+r)(1+t+r)^{1+\mu}(1+|\,t-r|)^{1-\mu}(1+q_+)^{\delta_+}
(1+q_-)^{\delta_-}}.
 \end{equation*}
 Then if $0< \delta_-<\mu,\,\, \, 0\leq
\delta_-\leq \delta_+$ we have
 \beq\label{eq:logest2}
 |\phi|\leq\frac{C}{(1+t+r)(1+q_+)^{\delta_+}(1+q_-)^{\delta_-}}.
 \eq
Finally, if $0<\mu < \delta_-\leq \delta_+$, then
\beq\label{eq:logest3}
|\phi| \leq \frac{C}{(1+t+r)(1+|q|)^\mu(1+q_+)^{\delta_+-\mu}}.
\eq
\end{lemma}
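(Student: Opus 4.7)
The plan rests on two ingredients: the positivity of the forward fundamental solution of $\Box$ on $\mathbb{R}^{1+3}$, which allows us to replace $|F|$ by its radial majorant $\widetilde F(t,r)=\sup_{|x|=r}|F(t,x)|$ without loss, and the explicit radial representation
\[
r\phi(t,r)=\frac{1}{4}\int_{|t-r|}^{t+r}\!\int_{-\xi}^{t-r}\rho\, F\bigl(\tfrac12(\xi+\eta),\tfrac12(\xi-\eta)\bigr)\,d\eta\,d\xi,
\]
with $t'+\rho=\xi$, $t'-\rho=\eta$, obtained from \eqref{eqn:pointwise radial wave} with vanishing data and radial source. Since the source bounds in each hypothesis are themselves radial, the proof reduces to estimating this double integral in the three regimes.

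For \eqref{eq:logest1} one inserts $\rho|F|\lesssim(1+\xi)^{-1}(1+|\eta|)^{-1-\delta}$ and performs the $\eta$-integration first. In the interior $r\le t$, both signs of $\eta$ appear, the $\eta$-integral is uniformly bounded by $C/\delta$, and the outer $\xi$-integration yields a factor $\ln(\langle t+r\rangle/\langle t-r\rangle)$ which, after dividing by $r$ and using the elementary inequality $r^{-1}\lesssim t\,r^{-1}(1+t+r)^{-1}$ valid for $r\le t$, matches $S^0/(1+t+r)$. The exterior case $r>t$ is more delicate: the constraint $\eta\le t-r<0$ forces $|\eta|\ge r-t=q_+$, and the outer $\xi$-integral should be kept in its exact form
\[
\frac{1}{4\delta^{2}(1+q_+)^{\delta}}\bigl[\delta A-(1-e^{-\delta A})\bigr],\qquad A:=\ln\frac{1+r+t}{1+r-t}.
\]
The elementary inequality $\delta A-(1-e^{-\delta A})\le C\min(\delta^{2}A^{2},\delta A)$ is what supplies the $t/(1+t+r)$-factor of $S^0$: in the regime $\delta A\lesssim 1$ (typical in the far exterior) one combines the $A^{2}$-bound with $A\lesssim 2t/(1+r-t)$ to produce $t^{2}/(r(1+r-t)^{2})$, which is exactly $S^0/(1+t+r)$ up to the $(1+q_+)^{-\delta}$ weight; the complementary regime near the light cone is covered by the linear $\delta A$-bound, since $(1+t+r)/t=O(1)$ there.

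For \eqref{eq:logest2} and \eqref{eq:logest3} the improved hypothesis gives $\rho|F|\lesssim(1+\xi)^{-1-\mu}(1+|\eta|)^{-1+\mu}(1+\eta_+)^{-\delta_+}(1+\eta_-)^{-\delta_-}$, and the outer $\xi$-integral now converges to $C\mu^{-1}(1+|t-r|)^{-\mu}$ with no logarithmic loss. The inner $\eta$-integration is controlled by the interplay between $(1+|\eta|)^{-1+\mu}$ and the endpoint weights $(1+\eta_\pm)^{-\delta_\pm}$. When $0<\delta_-<\mu$ the combined weight is absolutely integrable at infinity and direct estimation of the endpoint contributions reconstitutes the product $(1+q_-)^{-\delta_-}(1+q_+)^{-\delta_+}$ appearing in \eqref{eq:logest2}; when $0<\mu<\delta_-$ the slower decay of $(1+\eta_+)^{-\delta_+}$ forces a redistribution of $\mu$ units of decay between $\langle q\rangle$ and $\langle q_+\rangle$, producing the form $(1+|q|)^{-\mu}(1+q_+)^{-\delta_++\mu}$ of \eqref{eq:logest3}. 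The principal obstacle throughout is the exterior portion of \eqref{eq:logest1}: a termwise bound of the outer $\xi$-integral loses a factor $r/t$ relative to the target, and only by retaining the cancellation $\delta A-(1-e^{-\delta A})=O((\delta A)^{2})$ for small $\delta A$ does one recover the sharp $(1+t+r)^{-1}$-weight encoded in $S^0$.
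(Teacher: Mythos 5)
Your proposal follows the paper's argument essentially step for step: radial majorization via positivity of the fundamental solution, the null-coordinate representation $r\overline\phi=4\int\int\rho\overline F\,d\eta\,d\xi$, and a case-by-case estimation of the double integral split along $r\gtrless t$ (with the further split in $\delta_-\gtrless\mu$ for \eqref{eq:logest2}--\eqref{eq:logest3}). The only cosmetic variation is in the exterior part of \eqref{eq:logest1}, where you integrate in $\eta$ before $\xi$ while the paper integrates in $\xi$ first and then changes variables $s=(1+|\eta|)/(1+t+r)$; both routes reduce to the same quantity $\delta^{-2}(1+q_+)^{-\delta}\bigl[\delta A-(1-e^{-\delta A})\bigr]$ with $A=\ln\tfrac{1+t+r}{1+r-t}$, which you then bound by the same elementary inequality, and your treatment of \eqref{eq:logest2}--\eqref{eq:logest3}, while terser than the paper's explicit split on $\tilde\delta=\min(\delta_-,\delta_+)\gtrless\mu$ in the interior, is the same argument in sketch form.
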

\begin{proof} Let $\overline{F}(t,r)\!=\!\sup_{\omega\in S^2} |F(t,r\omega)|$
and let $F_0\!=\!F H$ where $H\!=\!1$,
when $t\!>\!0$ and $H\!=\!0$, when $t\!<\!0$.
 Since $|F_0|\leq \overline{F}_0$ it follows from
 the positivity of the fundamental solution that $|\phi|\leq
 |\overline{\phi}|$ where $\overline{\phi}$ is the solution of
 $-\Box\overline{\phi}=\overline{F}_0$ with vanishing initial data.
 Since the wave operator is invariant under rotations
 it follows that $\overline{\phi}$ is independent of the angular
 variables so
 $
 (\pa_t-\pa_r)(\pa_t+\pa_r)(r\overline{\phi}(t,r))=r\overline{F}_0.
 $
 If we now introduce new variables $\xi=t+r$ and $\eta=t-r$ and
 integrate over the region $R=\{(\xi,\eta);\,
 -\infty\leq \eta\leq t-r,\, \, t-r\leq \xi \leq t+r\}$
 using that $r\overline{\phi}(t,r)$ vanishes when $\eta=-\infty$ and
 when $r=0$, i.e. $\xi=\eta=t-r$ we obtain
 \beqs
 r\overline{\phi}(t,r)=4\int_{t-r}^{t+r} \int_{-\infty}^{t-r}
 \rho \overline{F}_0(s,\rho) H(s) \, d\eta d\xi,
 \qquad s=\frac{\xi+\eta}{2},\quad \rho=\frac{\xi-\eta}{2}.
 \eqs
 In the first case we have
 \beqs
 r\overline{\phi}(t,r)\leq 4\int_{t-r}^{t+r} \int_{-\xi}^{t-r}
 \frac{H(\xi+\eta) }{(1+|\xi|)^{}(1+|\eta|)^{1+\delta}}
 \, d\eta d\xi.
 \eqs
 If $t\!>\!r$ \eqref{eq:logest1} follows from integrating this,
 since
  $
  \frac{1}{r}\log{\big(\frac{1+t+r}{1+t-r}\big)}
 \!\leq \!\frac{C}{1+t+r} S^0(t,r).
  $
  If $r>t$ then we integrate first in the $\xi$ direction
\beqs
 r\overline{\phi}(t,r)\les \int_{\!-(t+r)}^{-(r-t)}\!\!
 \int_{|\eta|}^{t+r}
 \!\!\!\!\!\frac{ d\xi d\eta }{(1+|\,\xi|)(1+|\eta|)^{1+\delta}}
\les \int_{\!-(t+r)}^{-(r-t)}
 \frac{ \log{\big|\frac{1+t+r}{1+|\eta|}\big|} \, d\eta }
 {(1+|\eta|)^{1+\delta}}
 \les \frac{C}{(1+t+r)^\delta}
 \int_{\frac{1+r-t}{1+r+t}}^1\!\!\!\!\!
 \frac{\ln{\big|\frac{1}{s}\big|}\, ds}
 {\,\,\,s^{1+\delta}\!\!\!},
 \eqs
  and \eqref{eq:logest1} for $r>t$ follows from this.
To prove \eqref{eq:logest2} we must estimate
 \begin{equation*}
 r\overline{\phi}(t,r)\leq 4\int_{t-r}^{t+r} \int_{-\xi}^{t-r}
 \frac{ H(\xi+\eta) \,\, d\eta \,d\xi}{(1+|\xi|)^{1+\mu}
 (1+|\eta|)^{1-\mu}(1+\!\eta_-)^{\delta_+}
 (1+\!\eta_+)^{\delta_-}}.
 \end{equation*}
 If $r>t$ then we integrate first in the $\xi$ direction
\beqs
 r\overline{\phi}(t,r)\les \int_{\!-(t+r)}^{-(r-t)}\!\!\int_{|\eta|}^{t+r}
 \!\!\!\!\!\frac{\ d\xi d\eta }{(1+|\xi|)^{1+\mu}
 (1+\!|\,\eta|)^{1+\delta_+ -\mu}}
\les \int_{\!-(t+r)}^{-(r-t)}
 \frac{\, d\eta }{(1+\!|\,\eta|)^{1+\delta_+}}
\eqs
which is $\les (1+\!|\,t-r|)^{-\delta_+}$ so
\eqref{eq:logest2} for $r>t$ follows.
 If $t>r$ and we set ${\delta}=\min(\delta_-,\,\delta_+)$ it follows from integrating in the $\eta$ direction
  that if
 $1+\delta -\mu<1$ we have
 \beqs
 r\overline{\phi}(t,r)\leq\! 4\!\int_{t-r}^{t+r}
\frac{(1+\!|\xi|)^{\mu-\delta}+(1+\!|\,t-r|)^{\mu-\delta} \!\!\!}{(1+|\xi|)^{1+\mu}}
 \, d\xi
  \leq \frac{C r}{(1+t+r)(1+|t-r|)^{{\delta}}}.
 \eqs
The proof for \eqref{eq:logest3} follows the same approach in the interior, where we instead use the fact that if $1+\delta-\mu > 1$ we have
\[
r\overline{\phi}(t,r)\leq C\int_{t-r}^{t+r}\frac{1}{(1+|\xi|)^{1+\mu}}d\xi
\]
\end{proof}

\begin{lemma}\label{lem:homwaveeqdecay} If $w$ is the solution of
 \begin{equation*}
 -\Box w=0,\quad \quad w\big|_{t=0}=w_0,\quad \partial_t w\big|_{t=0}=w_1
 \end{equation*}
 then for any $0<\gamma<1$;
 \beq\label{eq:homoest}
 (1+t+r)(1+|\,r-t|)^\gamma |w(t,x)|\\
 \les {\sup}_x \big( (1+|x|)^{2+\gamma}
 ( |w_1(x)|+|\,\partial w_0(x)|) +(1+ |x|)^{1+\gamma}|w_0(x)|\big).
 \eq
\end{lemma}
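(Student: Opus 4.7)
The plan is to prove the bound via Kirchhoff's representation formula
$$w(t,x) = \frac{1}{4\pi t^2}\int_{|y-x|=t}\bigl[w_0(y) + t\, w_1(y) + (y-x)\cdot\nabla w_0(y)\bigr]\,dS(y),$$
valid for $t>0$. Let $r=|x|$ and denote by $C$ the right-hand side of the claimed inequality, so that the hypothesis yields $|w_0(y)|\leq C(1+|y|)^{-1-\gamma}$ and $|\nabla w_0(y)|,|w_1(y)|\leq C(1+|y|)^{-2-\gamma}$. Since $|(y-x)\cdot\nabla w_0(y)|\leq t\,|\nabla w_0(y)|$, these bounds depend only on $|y|$, so I parametrise points on $\{|y-x|=t\}$ by $y = x+t\omega$ with $\omega\in\mathbb{S}^2$ and introduce $\mu = \omega\cdot(x/|x|)$; the identity $\rho^2 = r^2+t^2+2rt\mu$ (with $\rho=|y|$) gives $d\mu = \rho/(rt)\,d\rho$ and, after integrating the azimuthal angle, collapses the spherical mean of any function of $\rho$ to $(2rt)^{-1}\int_{|t-r|}^{t+r}(\cdot)\rho\,d\rho$. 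Using $\rho/(1+\rho)\leq 1$ to absorb the Jacobian into the decay factors, this reduces the problem to the one-dimensional estimate
$$|w(t,x)| \lesssim \frac{C}{rt}\int_{|t-r|}^{t+r}\bigl[(1+\rho)^{-\gamma} + t(1+\rho)^{-1-\gamma}\bigr]\,d\rho.$$

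Next I evaluate each of the two one-dimensional integrals explicitly and extract cancellation via the elementary inequality $u^a\geq u$ on $u\in[0,1]$ for $a\in[0,1]$ --- equivalently $1-u^a\leq 1-u$ --- applied with $u = (1+|t-r|)/(1+t+r)\in(0,1]$ and the identity $1-u = 2\min(r,t)/(1+t+r)$. For the first integral, $\tfrac{1}{1-\gamma}[(1+t+r)^{1-\gamma}-(1+|t-r|)^{1-\gamma}]$, factoring out $(1+t+r)^{1-\gamma}$ and applying the inequality with $a=1-\gamma$ produces a bound $\lesssim \min(r,t)(1+t+r)^{-\gamma}$; dividing by $rt$ and using $\min(r,t)/(rt) = 1/\max(r,t)\lesssim (1+t+r)^{-1}$ (when $t+r\gtrsim 1$) gives a contribution $\lesssim C(1+t+r)^{-1-\gamma}$. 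For the second integral, $\tfrac{1}{\gamma}[(1+|t-r|)^{-\gamma}-(1+t+r)^{-\gamma}]$, factoring out $(1+|t-r|)^{-\gamma}$ and applying the inequality with $a=\gamma$ yields a bound $\lesssim \min(r,t)(1+|t-r|)^{-\gamma}/(1+t+r)$; multiplying by $t/(rt)=1/r$ together with $\min(r,t)/r\leq 1$ produces a contribution $\lesssim C(1+t+r)^{-1}(1+|t-r|)^{-\gamma}$. Since $(1+t+r)^{-\gamma}\leq (1+|t-r|)^{-\gamma}$, combining both contributions yields the claim for $t+r\gtrsim 1$.

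The regime $t+r\lesssim 1$ is handled trivially: the weight $(1+t+r)(1+|t-r|)^\gamma$ is bounded, and $|w(t,x)|\lesssim C$ follows directly from the data bounds. The main delicacy is the second integral near the light cone $r=t$: a naive mean-value bound would introduce a factor $(1+t+r)/(1+|t-r|)$ and fail, while the sharp elementary inequality $1-u^\gamma\leq 1-u$ extracts precisely the gain $2\min(r,t)/(1+t+r)$ needed to cancel the $1/r$ coming from the Kirchhoff Jacobian.
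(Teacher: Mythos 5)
Your proof is correct and follows essentially the same route as the paper: both start from Kirchhoff's formula, then collapse the spherical mean (via the substitution $\rho=|x+t\omega|$, with $\rho^2=(t-r)^2+2rts$ and Jacobian $\rho/(rt)$) to a one-dimensional integral over $\rho\in[|t-r|,t+r]$. The only real difference is how that one-dimensional integral is estimated. The paper splits into the regimes $|r-t|\geq t/2$ and $t/2<r<2t$, handling the first by crude pointwise bounds and the second by the change of variables $\tau=rts$. You instead compute the antiderivatives exactly and extract the near-light-cone gain uniformly via the elementary inequality $1-u^a\leq 1-u$ for $u=(1+|t-r|)/(1+t+r)\in(0,1]$ and $a\in[0,1]$, combined with the identity $1-u=2\min(r,t)/(1+t+r)$, which exactly produces the factor needed to cancel the $1/r$ or $1/(rt)$ Jacobian. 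This avoids the case split entirely and makes the cancellation mechanism more transparent; your observation that a naive mean-value bound fails near the cone while $1-u^\gamma\leq 1-u$ succeeds is precisely the right diagnosis. The two arguments are of comparable length and difficulty, with yours a bit more self-contained.
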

\begin{proof} The proof is an immediate consequence of
Kirchoff's formula
 $$
w(t,x)=t\!\int {\big(w_1(x+t{\omega})+\langle
w^{\prime}_0(x+t{\omega}),{\omega}\rangle \big)\,\frac{dS({\omega})}{4\pi}} +
 \int {w_0(x+t{\omega})\,\frac{dS({\omega})}{4\pi}},
 $$
 where $dS(\omega)$ is the measure on $\bold{S}^2$.
 If $x\!=\!r\bold{e}_1$, where $\bold{e}_1\!=\!(1,0,0)$ then
for $k\!=\!1,2$
 \beqs
 \int\!\! \frac{dS(\omega)/4\pi}{1\!+|r\bold{e}_1\!+t\omega|^{k+\gamma}}
 =\int_{\!-1}^1\frac{d\omega_1/2}{1\!+\big(
 (r\!-t\omega_1)^2\!+t^2(1\!-\omega_1^2)\big)^{(k+\gamma)/2}}
 =\int_0^2\!\! \frac{d s/2}{1\!+\big(
 (r\!-t)^2\!+2rt s\big)^{(k+\gamma)/2}}.
 \eqs
 \eqref{eq:homoest} follows directly if $|r\!-t|\! \geq \! t/2$.
If $t/2\!<\!r\!<\!2t$ we change variables $\tau\!=\!rt s$.
  If $k\!=\!2$ it can be bounded by
  $(rt)^{-1}(1+|r\!-t|)^{-\gamma}$ and if $k\!=\!1$ by
  $(rt)^{-1}(1\!+rt)^{(1-\gamma)/2}$.
 \end{proof}


\begin{thebibliography}{99}


\bibitem{BMS17} L. Bieri, S. Miao and S. Shahshahani {\it Asymptotic properties of solutions of the {M}axwell {K}lein
              {G}ordon equation with small data}, Comm. Anal. Geom. {\bf{25(1)}} (2017) 25--96.

\bibitem{CK90} D. Christodoulou and S. Klainerman {\it Asymptotic properties of linear field equations in {M}inkowski space}, Comm. Pure Appl. Math. {\bf{43(3)}} (1990) 137--199.

\bibitem{EM82a}  Eardley, Douglas M.; Moncrief, \emph{Vincent The
  global existence of Yang-Mills-Higgs fields in
  $4$-dimensional Minkowski space. I. Local existence and smoothness
  properties.}  Comm. Math. Phys.  \textbf{83}  (1982), no. 2, 171--191.


\bibitem{EM82b}  Eardley, Douglas M.; Moncrief, Vincent \emph{The
  global  existence of Yang-Mills-Higgs fields in $4$-dimensional
  Minkowski space. II. Completion of proof.}
Comm. Math. Phys.  \textbf{83}  (1982), no. 2, 193--212.

\bibitem{H97} L. H{\"o}rmander {\it Lectures on nonlinear hyperbolic differential equations} Springer-Verlag (1997).



\bibitem{K18} C. Kauffman,
{\it Global Stability for Charged Scalar Fields in an Asymptotically Flat Metric in Harmonic Gauge}
    Preprint (2018)

\bibitem{KM94}  Klainerman, S.; Machedon, M. \emph{On the
  Maxwell-Klein-Gordon equation with finite energy.}
Duke Math. J.  \textbf{74}  (1994),  no. 1, 19--44.

\bibitem{KWY18} S. Klainerman, Q. Wang and S. Yang {\it Global solution for massive Maxwell-Klein-Gordon equations} preprint (2018).

\bibitem{L90} H. Lindblad, {\it Blow up for solutions of  $\Box \, u \, = \,
|\, u \, |^p$ with small initial data},  Comm. Part. Diff Eq. {\bf 15(6)}
(1990), 757-821.


\bibitem{LR05} H. Lindblad and I. Rodnianski, {\it Global existence for the Einstein
    vaccum equations in wave coordinates.}
    Comm. Math. Phys. 256 (2005), no. 1, 43--110

\bibitem{LS06} H. Lindblad and J. Sterbenz,
{\it Global Stability for Charged Scalar Fields on Minkowski Space}
    IMRP Int. Math. Res. Pap. (2006)

\bibitem{LR10} H. Lindblad and I. Rodnianski, {\it The global stability of
the Minkowski space-time in harmonic gauge.}
 Annals of Math {\bf 171} (2010), no. 3, 1401-1477.

\bibitem{L17} H. Lindblad
{\it On the asymptotic behavior of solutions to Einstein's vacuum equations in wave coordinates.} Comm. Math. Phys. {\bf 353}, (2017), No 1, 135-184

\bibitem{LS17} H. Lindblad and V. Schlue
{\it Scattering from infinity for semilinear models of Einstein's equations satisfying the weak null condition.} Preprint (2017)


\bibitem{P99} M. Psarelli \emph{Asymptotic behavior of the
  solutions of  Maxwell-Klein-Gordon field equations in
  $4$-dimensional Minkowski space.}
  Comm. Partial Differential Equations  \textbf{24}  (1999),  no. 1-2, 223--272.

\bibitem{S91} W-T. Shu {\it Asymptotic properties of the solutions of linear and nonlinear
              spin field equations in {M}inkowski space} Comm. Math. Phys. {\bf{140(3)}} (1991) 449--480.

\bibitem{Y15} S. Yang {\it Decay of solutions of Maxwell-Klein-Gordon equations with large Maxwell field}
preprint (2015)
\end{thebibliography}
\end{document}